\title{Some remarks on regular foliations with numerically trivial canonical class}
\author{St\'ephane Druel}
\institution{Institut Fourier, UMR 5582 du CNRS, Universit\'e Grenoble Alpes, CS 40700, 38058 Grenoble cedex 9, France}\\
\email{stephane.druel@univ-grenoble-alpes.fr}}
\date{\vspace{-5ex}} 
\journal{\'Epijournal de G\'eom\'etrie Alg\'ebrique} 
\newcommand{\codim}{\textup{codim}}
\let \cedilla =\c
\renewcommand{\c}[0]{{\mathbb C}}
\newcommand{\N}{\textup{N}}
\newcommand{\NE}{\overline{\textup{NE}}}
\newcommand{\Hol}{\textup{Hol}}
\renewcommand{\phi}{\varphi}
\newcommand{\lra}{\longrightarrow}
\newcommand{\wt}{\widetilde}
\renewcommand{\le}{\leqslant}
\renewcommand{\ge}{\geqslant}
\newcommand{\sA}{\mathscr{A}}
\newcommand{\sE}{\mathscr{E}}
\newcommand{\sG}{\mathscr{G}}
\newcommand{\sH}{\mathscr{H}}
\newcommand{\sK}{\mathscr{K}}
\newcommand{\sL}{\mathscr{L}}
\newcommand{\sN}{\mathscr{N}}
\newcommand{\sO}{\mathscr{O}}
\newcommand{\sQ}{\mathscr{Q}}
\newtheorem{thm}{Theorem}[section]
\newtheorem{question}[thm]{Question}
\newtheorem{lemma}[thm]{Lemma}
\newtheorem{cor}[thm]{Corollary}
\newtheorem{prop}[thm]{Proposition}
\newtheorem{conj}[thm]{Conjecture}
\newenvironment{thm*}[1]{\bigskip\noindent {\bf Theorem }{\rm (#1)}{\bf .} \it}{\bigskip \rm}
\newtheorem{defn}[thm]{Definition}
\newtheorem{say}[thm]{\!\!}
\newtheorem{exmp}[thm]{Example}
\newtheorem{notation}[thm]{Notation}
\newtheorem{defn-thm}[thm]{Definition-Theorem} 
\newtheorem{defn-lemma}[thm]{Definition-Lemma}
\newtheorem{rem}[thm]{Remark}
\def\factor#1.#2.{\left. \raise 2pt\hbox{$#1$} \right/\hskip -2pt\raise -2pt\hbox{$#2$}}
\begin{document}


\maketitle



\begin{prelims}

\vspace{-0.1cm}

\def\abstractname{Abstract}
\abstract{In this article, we first describe codimension two regular foliations with numerically trivial canonical class on 
complex projective manifolds whose canonical class is not numerically effective. 
Building on a recent algebraicity criterion for leaves of algebraic foliations, we then address
regular foliations of small rank with numerically trivial canonical class on 
complex projective manifolds whose canonical class is pseudo-effective.
Finally, we confirm the generalized Bondal conjecture formulated by Beauville in some special cases.}

\keywords{Foliation}

\MSCclass{37F75}

\vspace{0.05cm}

\languagesection{Fran\cedilla{c}ais}{%

\textbf{Titre. Quelques remarques sur les feuilletages r\'eguliers de classe canonique num\'eriquement triviale}
\commentskip
\textbf{R\'esum\'e.}
Dans cet article nous d\'ecrivons tout d'abord les feuilletages de codimension deux de classe canonique num\'eriquement triviale sur des vari\'et\'es projectives complexes dont la classe canonique n'est pas num\'eriquement effective. En nous appuyant sur un crit\`ere d'alg\'ebricit\'e r\'ecent pour les feuilles des feuilletages alg\'ebriques, nous consid\'erons ensuite les feuilletages r\'eguliers de petit rang et de classe canonique num\'eriquement triviale sur des vari\'et\'es projectives complexes dont la classe canonique est pseudo-effective. Finalement, nous confirmons, dans certains cas sp\'ecifiques, la conjecture de Bondal g\'en\'eralis\'ee formul\'ee par Beauville.}

\end{prelims}


\newpage

\tableofcontents

\section{Introduction}

The Beauville-Bogomolov decomposition theorem asserts that any compact K\"ahler manifold with numerically trivial canonical bundle admits an \'etale cover that decomposes into a product of a torus, and irreducible,
simply-connected Calabi-Yau, and holomorphic symplectic manifolds (see \cite{beauville83}).
In \cite{touzet}, Touzet obtained a foliated version of the Beauville-Bogomolov decomposition theorem
for codimension 1 regular foliations with numerically trivial canonical bundle on compact K\"ahler manifolds.
The next statement follows from \cite[Th\'eor\`eme 1.2]{touzet} and \cite[Lemma 5.9]{bobo}.

\begin{thm*}{Touzet}
Let $X$ be a complex projective manifold, and let
$\sG \subset T_X$ be a regular codimension $1$ foliation on $X$ with $K_\sG\equiv 0$.
Then one of the following holds.
\begin{enumerate}[\rm (1)]
\item There exists a $\mathbb{P}^1$-bundle structure $\phi\colon X \to Y$ onto a complex projective manifold $Y$ with $K_Y\equiv 0$, and $\sG$ induces a flat holomorphic connection on $\phi$.
\item There exists an abelian variety $A$ as well as a simply connected projective manifold $Y$ with $K_Y\equiv 0$, and a finite \'etale cover $f: A \times Y \to X$ such that $f^{-1}\sG$ is the pull-back of a codimenion 1 linear foliation on $A$.
\item There exists a smooth complete curve $B$ of genus at least $2$ as well as a complex projective manifold $Y$ with $K_Y\equiv 0$, and a finite \'etale cover $f \colon B \times Y \to X$ such that $f^{-1}\sG$ is
induced by the projection morphism $B \times Y \to B$.
\end{enumerate}
\end{thm*}

\noindent Pereira and Touzet addressed regular foliations $\sG$ on complex projective manifolds with $c_1(\sG)\equiv 0$ and $c_2(\sG)\equiv 0$ in \cite{pereira_touzet}. They show the following (see \cite[Theorem C]{pereira_touzet}).

\begin{thm*}{Pereira--Touzet}
Let $X$ be a complex projective manifold, and let
$\sG \subset T_X$ be a regular foliation on $X$ with $c_1(\sG)\equiv 0$ and $c_2(\sG)\equiv 0$.
Then, there exists a finite \'etale cover $f\colon Z \to X$ as well as a dense open subset $Z^\circ \subset Z$, and a proper morphism $\phi^\circ\colon Z^\circ \to Y^\circ$ whose fibers are abelian varieties such that ${f^{-1}\sG}_{|Z^\circ} \subset T_{Z^\circ/Y^\circ}$ and 
such that ${f^{-1}\sG}_{|Z^\circ}$ induces linear foliations on the fibers of~$\phi^\circ$.
\end{thm*}

\noindent They also show by way of example that $\phi^\circ$ may not be a locally trivial fibration for the Euclidean topology (see \cite[Example 4.2]{pereira_touzet}). On the other hand, this phenomenon does not occur if $\sG$ has codimension $\le 2$. More precisely, they prove the following (see \cite[Theorem D]{pereira_touzet}). 

\begin{thm*}{Pereira--Touzet}
Let $X$ be a complex projective manifold, and let
$\sG \subset T_X$ be a regular codimension $2$ foliation on $X$ with $c_1(\sG)\equiv 0$ and $c_2(\sG)\equiv 0$.
Then, there exists an abelian variety $A$ as well as a complex projective manifold $Y$, 
and a finite \'etale cover $f: A \times Y \to X$ such that $f^{-1}\sG$ is the pull-back of a linear foliation on $A$.
\end{thm*}

\medskip

In this article, we first address regular codimension 2 foliations with numerically trivial canonical class on 
complex projective manifolds $X$ with $K_X$ not nef.

\begin{thm}\label{thmintro:cd2_non_nef}
Let $X$ be a complex projective manifold, and let
$\sG \subset T_X$ be a regular codimension 2 foliation on $X$ with $K_\sG\equiv 0$.
Suppose that $K_X$ is not nef.
Then one of the following holds.
\begin{enumerate}[\rm (1)]
\item There exist a complex projective manifold $Y$, a regular codimension 2 foliation $\sG_Y$ on $Y$ with 
$K_{\sG_Y}\equiv 0$, and a compact leaf $Z \subset Y$ of $\sG_Y$ such that $X$ 
is the blow-up of $Y$ along $Z$. Moreover, $\sG$ is the pull-back of $\sG_Y$.
\item There exists a conic bundle structure $\phi\colon X \to Y$ on $X$ onto a complex projective manifold $Y$,
$\sG$ is generically transverse to $\phi$ and induces a 
regular codimension 1 foliation $\sG_Y$ on $Y$ with 
$K_{\sG_Y}\equiv 0$. Moreover, 
the discriminant locus of $\phi$ is either empty or a union of leaves of $\sG_Y$.
\item There exists a smooth morphism $\phi\colon X \to Y$ onto a complex projective manifold $Y$ 
of dimension $\dim Y = \dim X -2$
with 
$K_Y\equiv 0$, and $\sG$ yields a flat holomorphic connection on $\phi$. 
\end{enumerate}
\end{thm}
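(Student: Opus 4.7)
Since $K_X$ is not nef, the Cone Theorem produces a $K_X$-negative extremal ray $R$ and the associated extremal contraction $\phi\colon X \to Y$. By bend-and-break there exists a rational curve $C$ generating $R$ with $0 < -K_X\cdot C \le \dim X + 1$. My overall strategy is to analyze the interaction of such a curve with $\sG$, and to deduce the shape of $\phi$ from the constraints imposed by the hypothesis $K_\sG\equiv 0$. Since $\det\sG|_C = \cO_C$, the splitting type of $\sG|_C$ is determined by the position of $C$ with respect to $\sG$, and it is the dichotomy ``$C$ tangent to $\sG$'' versus ``$C$ transverse to $\sG$'' that will drive the proof.

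In the \emph{tangent} case $T_C \subset \sG|_C$, the curve $C$ is contained in a leaf $L$ of $\sG$, and the Birkhoff--Grothendieck decomposition together with the degree-zero constraint forces $\sG|_C \cong \cO(2)\oplus \cO(-2) \oplus \cO^{\oplus(\dim X -4)}$. Let $E$ denote the locus swept out by deformations of $C$ inside leaves of $\sG$. I plan to show that $E$ is an irreducible divisor, entirely contracted by $\phi$, and that $E \to \phi(E)$ is the $\p^1$-bundle associated with the normal bundle of the compact leaf $Z := \phi(E)$. This requires identifying $\phi$ as a divisorial contraction with smooth center, which I would deduce from Ando-type classification results for extremal contractions of smooth varieties, using that the relative tangent sheaf $T_{X/Y}$ along $E$ is already contained in $\sG$; the latter then descends through $\phi$ to a regular foliation $\sG_Y$ on $Y$ via Bott's partial connection, with $Z$ as a compact leaf and $\sG = \phi^\ast\sG_Y$.

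In the \emph{transverse} case $T_C \not\subset \sG|_C$, the composite $T_C \to N_\sG|_C$ is a nonzero map of degree $2$, so $N_\sG|_C$ is a rank-$2$ bundle with a summand of degree $\ge 2$. Bend-and-break inside a fiber of $\phi$ shows that $\phi$ must be of fiber type with $\dim Y \ge \dim X - 2$, for otherwise the generic fiber would carry rational curves tangent to $\sG$, contradicting $K_\sG \equiv 0$ on such curves. In the subcase $\dim Y = \dim X - 1$ the general fiber is a smooth rational curve transverse to $\sG$; combining the bounded fiber dimension on $R$ with regularity of $\sG$, I would show that $\phi$ is equidimensional, hence a conic bundle, and that $\sG_Y := (d\phi)(\sG) \subset T_Y$ is a regular codimension $1$ foliation with $K_{\sG_Y}\equiv 0$. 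Invariance of $\sG$ under the deformation flow of $C$ then shows that the discriminant locus of $\phi$ is a union of leaves of $\sG_Y$. In the subcase $\dim Y = \dim X - 2$, the map $d\phi|_\sG\colon \sG \to \phi^\ast T_Y$ is a morphism of vector bundles of the same rank that is generically an isomorphism; from $\det \sG = \cO_X$ and the fact that $-K_{X/Y}$ is $\phi$-ample on $R$, one concludes that $\phi$ is smooth with Fano fibers, that $d\phi|_\sG$ is an isomorphism everywhere, and that the identity $K_\sG = \phi^\ast K_Y$ yields $K_Y \equiv 0$; the resulting splitting $T_X = \sG \oplus T_{X/Y}$ is precisely a flat holomorphic connection on $\phi$.

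The main obstacle is the case analysis of the extremal contraction $\phi$, and in particular excluding small contractions as well as non-equidimensional fiber-type contractions using only $K_\sG\equiv 0$ and the regularity of $\sG$. The crucial input is a tight control of the splitting type of $\sG|_C$ and $N_\sG|_C$ on minimal rational curves in $R$, coupled with classification results for Mori contractions on smooth varieties in the spirit of Ando and Andreatta--Wi\'sniewski and the foliated refinements due to Araujo--Druel, which together allow one to match each admissible extremal contraction with exactly one of the three conclusions.
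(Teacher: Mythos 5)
There is a genuine gap, and it is structural. Your organizing dichotomy --- ``$C$ tangent to $\sG$'' versus ``$C$ transverse to $\sG$'' for a rational curve $C$ generating the extremal ray --- cannot produce case (1) of the theorem, because the tangent case is empty. If a curve $C$ contracted by $\phi$ were tangent to $\sG$, then by the Bott partial connection (Lemma \ref{lemma:normal_bundle_restricted_leaf}) the pull-back of $\sN=T_X/\sG$ to $C$ would have vanishing first Chern class, whereas $\deg\big(\det(\sN)_{|C}\big)=-K_X\cdot C>0$ since $K_\sG\equiv 0$ and $C$ is $K_X$-negative. So no contracted curve is tangent to $\sG$; in particular your claimed splitting $\sG_{|C}\cong \sO(2)\oplus\sO(-2)\oplus\sO^{\oplus(\dim X-4)}$ never occurs, and the divisorial (blow-up) case must be extracted from \emph{transverse} curves. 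Indeed, in the paper's Proposition \ref{prop:foliation_blow_up} the fibers $C$ of the exceptional divisor $E\to Z$ satisfy $\sG_{|C}\cong\sO_{\mathbb{P}^1}^{\oplus r}\subset {T_E}_{|C}$: the divisor $E$ is $\sG$-invariant but the contracted curves are transverse to the foliation, and $Z$ becomes a leaf of the projected foliation $\sG_Y$. As written, your proof would never reach conclusion (1).

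The second gap is the mechanism bounding fiber dimensions. You propose that bend-and-break inside a fiber of dimension $\ge 3$ would produce rational curves tangent to $\sG$; but bend-and-break only produces rational curves in the fiber, and there is no reason these are tangent to $\sG$ without further input (for the restricted foliation $\sG_{|F}\cap T_F$ one would need an Araujo--Druel-type statement, which is exactly what forces the restriction $\dim F\le 2$ in Proposition \ref{prop:projectable_criterion}, and in general is the open Question \ref{question:foliation_rational_curve}). The paper instead bounds fibers cohomologically: the Baum--Bott/Atiyah-class vanishing $c_1(\sN)^{q+1}\equiv 0$ of Lemma \ref{lemma:vanishing_char_classes} gives $\dim F\le q=2$ for every fiber component, and a refinement using the vanishing $H^q\big(F,\sO(K_X)_{|F}\big)=0$ of Andreatta--Wi\'sniewski excludes jumping fibers of dimension exactly $2$ (Lemma \ref{lemma:dimension_fiber_contraction}). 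This is what rules out small contractions, forces birational contractions to have one-dimensional fibers (hence to be smooth blow-ups by Ando/Wi\'sniewski), and gives equidimensionality in the conic-bundle case. Your sketch has no substitute for this step, and without it the case analysis of $\phi$ does not close.
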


In \cite{pereira_touzet}, the authors show that if $\sG$ is a regular foliation with $c_1(\sG)\equiv 0$ on a complex projective manifold $X$ with $K_X$ pseudo-effective, then $\sG$ is polystable with respect to any polarization. They also conjecture that if moreover $\sG$ is stable with respect to some polarization and $c_2(\sG)\not\equiv 0$, then $\sG$ is algebraically integrable (see 
\cite[Section 6.5]{pereira_touzet}). We recently confirmed this conjecture in some special cases in  
\cite{bobo}. Building on these results, we obtain the following.

\begin{thm}\label{thmintro:cd2_psef}
Let $X$ be a complex projective manifold, and let
$\sG \subset T_X$ be a regular foliation on $X$ with $K_\sG\equiv 0$.
Suppose that $K_X$ is pseudo-effective. Suppose furthermore that $\sG$ has rank at most $3$.
Then there exist complex projective manifolds $Y$ and $F$, a finite \'etale cover $f \colon Y \times F \to X$, and a regular foliation $\sH$ on $Y$ with $c_1(\sH)\equiv 0$ and $c_2(\sH)\equiv 0$
such that $f^{-1}\sG$ is the pull-back of $\sH$.
\end{thm}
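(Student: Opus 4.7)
The plan is to combine the Pereira--Touzet polystability result with the author's algebraicity criterion from \cite{bobo}. Since $K_X$ is pseudo-effective and $c_1(\sG)\equiv 0$, the foliation $\sG$ is polystable with respect to every polarization, so one decomposes $\sG=\sG_1\oplus\sG_2$ in which $\sG_1$ gathers the stable summands with $c_2\equiv 0$ and $\sG_2$ gathers those with $c_2\not\equiv 0$. A preliminary point is to verify that each stable direct summand is involutive, hence a regular subfoliation; this is expected to follow from stability considerations applied to the $\cO_X$-linear map $\wedge^2\sG_i\to\sG/\sG_i$ induced by the Lie bracket.

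For the summand $\sG_2$, every stable component has rank at most $3$ and $c_2\not\equiv 0$, so the algebraicity criterion from \cite{bobo} applies in the rank range at hand and shows that $\sG_2$ is algebraically integrable. Using regularity of $\sG$ together with $K_{\sG_2}\equiv 0$, this should upgrade, after a finite \'etale cover, to a smooth proper morphism $\pi\colon X\to Y_0$ whose relative tangent sheaf is $\sG_2$, with fibers $F$ projective manifolds satisfying $K_F\equiv 0$. The main obstacle is to promote $\pi$ to a product projection after a further \'etale cover. Since $K_{X/Y_0}\equiv 0$ and $\pi$ is smooth, one has $K_X\equiv\pi^*K_{Y_0}$, hence $K_{Y_0}$ is pseudo-effective; combined with stability of $\sG_2$, the Beauville--Bogomolov decomposition of the small-dimensional fibers $F$, and Viehweg--Zuo-type positivity arguments, I expect this to force $\pi$ to be isotrivial and trivialized by a finite \'etale cover, so that $X'\cong Y_0'\times F$ with $\sG_2$ the vertical direction.

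Once $X$ is replaced by such a product $Y_0'\times F$, the complementary summand $\sG_1$ descends to a regular foliation $\sG_1'$ on $Y_0'$ with $c_1(\sG_1')\equiv c_2(\sG_1')\equiv 0$, and $K_{Y_0'}$ remains pseudo-effective. Applying the Pereira--Touzet structure theorem for foliations with $c_1\equiv c_2\equiv 0$ to $\sG_1'$ on $Y_0'$ then yields, after a further \'etale cover, a splitting $Y_0'\cong A\times W$ together with an identification of $\sG_1'$ with the pull-back of a linear foliation on the abelian variety $A$. Setting $Y:=A\times W$ and keeping $F$ as the extra factor gives the required product $X\sim Y\times F$ after finite \'etale cover; the foliation $\sH$ on $Y$ induced by the linear foliation on $A$ satisfies $c_1(\sH)\equiv c_2(\sH)\equiv 0$, and $f^{-1}\sG$ on $Y\times F$ is the pull-back of $\sH$ under the projection $Y\times F\to Y$, as required.
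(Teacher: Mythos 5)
Your overall strategy --- polystable decomposition, the algebraicity criterion of \cite{bobo} applied to the stable summands with $c_2\not\equiv 0$, and a trivialization of the resulting fibration after a finite \'etale cover --- is the same as the paper's, which splits the argument into Proposition \ref{prop:conj_holds_rank_3} and Theorem \ref{thm:main3}. (The involutivity of the stable summands is indeed \cite[Lemma 2.1]{pereira_touzet}, and your grouping of the $c_2\not\equiv 0$ pieces into one $\sG_2$ is a reasonable variant of the paper's summand-by-summand reduction.) However, the two steps you flag as ``expected'' are exactly where the paper has to work hardest, and as written they are genuine gaps. First, passing from ``$\sG_2$ is algebraically integrable'' to ``after a finite \'etale cover of $X$ the leaves are the fibers of a smooth proper morphism'' is not automatic: the holomorphic Reeb stability theorem of \cite{hwang_viehweg} only produces \'etale (not finite) maps $f_\gamma\colon X_\gamma\to X$ whose images cover $X$, each carrying a smooth fibration $\phi_\gamma\colon X_\gamma\to Y_\gamma$ inducing the foliation. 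To get one global finite \'etale cover one must pass to the Galois closure of the compositum of the function fields $\mathbb{C}(Y_\gamma)$, normalize, desingularize the base, and at the very end invoke Nagata--Zariski purity. This is the content of Proposition \ref{prop:alg_int_zer_can_class} and occupies most of Section 6; it cannot be waved through.

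Second, ``isotrivial, hence trivialized by a finite \'etale cover'' is false in general, and Viehweg--Zuo positivity is not the mechanism. Isotriviality is the cheap part: by \cite[Theorem 5.2]{loray_pereira_touzetv4}, $K_{X/Y_0}\equiv 0$ with $K_X$ pseudo-effective forces a flat holomorphic connection on $\pi$, so the family is given by a representation $\pi_1(Y_0)\to\textup{Aut}(F)$. That representation has finite image only once one knows $h^0(F,T_F)=0$ \emph{and} $h^1(F,\sO_F)=0$; otherwise translations on an abelian factor of $F$ can produce infinite monodromy. The paper therefore first splits off the abelian part of the fibers via Brion's theorem on actions of non-affine groups (Lemma \ref{lemma:alg_int_zer_can_class}) and then proves the remaining fiber is simply connected, and this last step uses that $\sG$ is \emph{strongly} stable, arranged beforehand by passing to an \'etale cover with connected holonomy group \`a la Balaji--Koll\'ar (Lemma \ref{lemma:holonomy_group_versus_strong_stability} and the proof of Theorem \ref{thm:main3}). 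Your proposal never introduces strong stability or connected holonomy, so you have no way to exclude a positive-irregularity factor in $F$, and the trivialization step collapses precisely there.
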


Notice that Touzet addressed codimension 2 regular foliations $\sG$ 
with $c_1(\sG)\equiv 0$ on compact K\"ahler manifolds with seminegative Ricci curvature in \cite{touzet_cd2}. See \cite[Th\'eor\`eme 1.5]{touzet_cd2} for a precise statement.

\medskip

Finally, we confirm the generalized Bondal conjecture formulated by Beauville (\cite[Conjecture 5]{beauville_problem_list}) in some special cases. Recall that a holomorphic Poisson structure on a complex manifold $X$ is a bivector field 
$\tau \in H^0(X,\wedge^2T_X)$, such that the bracket $\{f,g\}:=\langle \tau ,df\wedge dg\rangle$ defines a Lie algebra structure on $\sO_X$. A Poisson structure defines a skew-symmetric  map $\tau ^\sharp:\Omega ^1_X\rightarrow T_X$; the rank of $\tau $ at a point $x\in X$ is the rank of $\tau ^\sharp(x)$. 
Let $r$ be the generic rank of $\tau$, and set 
$X_r :=\{x\in X\ |\ \textup{rk }\tau^\sharp (x)=r\}$.

\begin{thm}\label{thmintro:generalized_bondal_conjecture}
Let $(X,\tau )$ be a projective Poisson manifold. Suppose that $\tau$ has generic 
rank $r \ge \dim X -2$.
If the  degeneracy locus $X\setminus X_{r}$ of $\tau$ is non-empty, then it has a component of dimension $>r-2$.
\end{thm}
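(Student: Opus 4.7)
The plan is to argue by contradiction. Suppose $D := X \setminus X_r$ is non-empty and every irreducible component of $D$ has dimension at most $r - 2$; writing $n := \dim X$, this means $\codim_X D \ge n - r + 2$. Since the generic rank $r$ of a skew morphism is always even, the hypothesis $r \ge n - 2$ forces $n - r \in \{0, 1, 2\}$. The case $n - r = 0$ is immediate: the Pfaffian $\tau^{n/2}$ is a nonzero section of the line bundle $\det T_X$ whose vanishing scheme is exactly $D$, and any nonzero section of a line bundle either is nowhere vanishing or has pure codimension-one vanishing, contradicting our hypotheses.

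Suppose now $n - r \in \{1, 2\}$, so that $\codim_X D \ge 3$. The image of $\tau^\sharp \colon \Omega^1_X \to T_X$ defines a rank-$r$ regular foliation $\sG_0$ on $X_r$, and the section $\tau^{r/2}$ trivializes $\det \sG_0$, so $K_{\sG_0}$ is trivial. The main technical step, and the chief obstacle I anticipate, is to extend $\sG_0$ to a regular foliation $\sG$ on all of $X$ with $K_\sG \equiv 0$. I would work with the annihilator $\ker(\tau^\sharp)|_{X_r} \subset \Omega^1_{X_r}$, a subbundle of rank $n - r \in \{1, 2\}$; Hartogs extension across $D$ yields a reflexive saturated subsheaf $\sN \subset \Omega^1_X$. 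I would then upgrade $\sN$ to a subbundle by exploiting the Pfaffian structure of the rank-drop locus of a skew morphism: the generic codimension of the relevant Pfaffian locus is $\binom{n - r + 2}{2}$, which together with a local analysis of $\tau^\sharp$ on $D$ should rule out a non-subbundle locus of codimension $\le n - r + 1$, forcing $\sN$ to be a subbundle everywhere.

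Granted the regular extension, I apply the available structure theorems: Touzet's theorem when $n - r = 1$, and Theorem \ref{thmintro:cd2_non_nef} in the non-nef case together with Theorem \ref{thmintro:cd2_psef} and the Pereira--Touzet description in the pseudo-effective case when $n - r = 2$. In each resulting structure --- a $\p^1$-bundle or conic bundle whose foliation gives a flat connection, a smooth fibration $\phi \colon X \to Y$ with $K_Y \equiv 0$, or a product $A \times Y$ after finite \'etale cover on which the pulled-back foliation is induced from an abelian factor --- I would analyze $\tau$ with respect to the given decomposition. Since $\sG = \operatorname{im}(\tau^\sharp)$, the bivector $\tau$ lies in $H^0(\wedge^2 \sG) \subset H^0(\wedge^2 T_X)$, and $\det \sG$ acquires a preferred trivialization (or $\tau$ descends fibrewise in the fibration cases). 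The Pfaffian $\tau^{r/2}$ then becomes a section of a line bundle on a leaf, a factor, or the base, and vanishes either nowhere or in pure codimension one; the former contradicts $D \ne \emptyset$, the latter contradicts $\codim_X D \ge 3$, completing the argument.
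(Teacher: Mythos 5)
There is a genuine gap, and it sits exactly where you flagged it: the ``main technical step'' of extending $\sG_0$ to a \emph{regular} foliation on all of $X$. First, note that this step, if true, would already finish the proof with no structure theorems at all: once $\sG$ is a subbundle with $\det(\sG)\cong\sO_X$, the skew pairing $\sG^*\to\Omega^1_X\to\sG$ induced by $\tau$ has determinant a global constant, nonzero on $X_r$, hence nonzero everywhere, forcing $X=X_r$ (this is Lemma \ref{lemma:singular_set_associated_foliation} of the paper). The same argument read backwards shows that your step cannot be established by any local or codimension-counting device: whenever $D=X\setminus X_r$ is non-empty of codimension $\ge 2$, the singular locus of $\sG$ is exactly $D$, so under your contradiction hypothesis the foliation is genuinely singular in codimension $\ge n-r+2$, and what you must prove is that this situation is impossible. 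The appeal to the generic Pfaffian codimension $\binom{n-r+2}{2}$ gives nothing here --- the theorem is precisely the assertion that the actual degeneracy locus is forced to be of \emph{much lower} codimension than the generic count suggests, so the generic count cannot be the mechanism --- and ``a local analysis of $\tau^\sharp$ on $D$'' is not a proof. This missing step is Conjecture \ref{conjecture:singular_set} for $q\le 2$, i.e.\ Theorem \ref{thm:singular_set}, and it is the entire content of the result.

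The paper's route is: (i) reduce to $\codim D\ge 2$ and observe $Z(\sG)\ne\emptyset$ with $Z(\sG)\subset D$ (Lemma \ref{lemma:singular_set_associated_foliation}); (ii) invoke Loray--Pereira--Touzet to conclude that a foliation with trivial canonical class and non-empty singular set forces $K_X$ not nef; (iii) apply the \emph{singular} versions of the structure results (Proposition \ref{prop:singular_set_cd1} and Theorem \ref{thm:main2}, which only assume $\dim Z(\sG)\le \dim X-q-2$, not regularity) to show $Z(\sG)=\emptyset$, a contradiction. Your final paragraph instead applies Touzet's theorem, Theorem \ref{thmintro:cd2_non_nef} and Theorem \ref{thmintro:cd2_psef}, all of which are stated for regular foliations and hence never apply in the situation you need to exclude; the pseudo-effective case (Theorem \ref{thmintro:cd2_psef}) is in any event vacuous here by step (ii). The case $r=\dim X$ via the Pfaffian section of $-K_X$ is correct and agrees with the paper.
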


\medskip

\noindent{\bf Outline of the paper.}
In section 2, we review basic definitions and results about foliations. 
Sections 3 and 4 consist of technical preparations. In section 3, we review standard facts about (partial) connections and give some applications. Section 4 presents a criterion that guarantees that a given
foliation is projectable under a morphism. We then study projectable foliations under some Fano-Mori contractions. With these preparations at hand, the proof of Theorem \ref{thmintro:cd2_non_nef} which we give in Section 5 becomes reasonably short. Section 6 is devoted to the proof of Theorem \ref{thmintro:cd2_psef}. 
The proof relies on a global version of Reeb stability theorem for regular foliations with numerically trivial canonical class on complex projective manifolds with pseudo-effective canonical divisor. 
Finally, we prove Theorem \ref{thmintro:generalized_bondal_conjecture}
in section 7.

\

\noindent {\bf Notation and global conventions.}
We work over the field $\mathbb{C}$ of complex numbers. Varieties are always assumed to be irreducible. We denote by $X_{\textup{reg}}$ the smooth locus of a variety $X$.

The word ``stable'' will always mean ``slope-stable with respect to a
given polarization''. Ditto for semistability.

\

\noindent {\bf Acknowledgements.} We would like to thank Pierre-Emmanuel Chaput,
Jorge V. Pereira and Fr\'ed\'eric Touzet for helpful discussions. We also thank the referee for their thoughtful suggestions on how to improve the presentation of some of the results.

\section{Foliations}

In this section we recall basic facts concerning foliations.

\begin{say}[Foliations] \rm
A \emph{foliation} on a normal variety $X$ is a coherent subsheaf $\sG\subset T_X$ such that
\begin{itemize}
	\item $\sG$ is closed under the Lie bracket, and
	\item $\sG$ is saturated in $T_X$. In other words, the quotient $T_X / \sG$ is torsion free.
\end{itemize}
The \emph{rank} $r$ of $\sG$ is the generic rank of $\sG$.
The \emph{codimension} of $\sG$ is defined as $q:=\dim X-r$. 
Let $U \subset X_{\textup{reg}}$ be the open set where $\sG_{|X_{\textup{reg}}}$ is a subbundle of $T_{X_{\textup{reg}}}$. 
The \textit{singular locus} of $\sG$ is defined
as $Z(\sG):=X \setminus U$.
We say that $\sG$ is \emph{regular} if $U=X_{\textup{reg}}=X$. 

A \emph{leaf} of $\sG$ is a connected, locally closed holomorphic submanifold $L \subset U$ such that
$T_L=\sG_{|L}$. A leaf is called \emph{algebraic} if it is open in its Zariski closure.

The \emph{canonical class} $K_\sG$ of $\sG$ is any Weil divisor on $X$ such that $\sO_X(-K_\sG)\cong \det(\sG):=\big(\wedge^r \sG\big)^{**}$.
\end{say}

We will use the following notation.

\begin{notation} \rm
Let $\phi\colon X \to Y$ be a dominant morphism of normal varieties, and let $D$ be a Weil $\mathbb{Q}$-divisor on $Y$. 
Suppose that $\phi$ is equidimensional.
The \emph{pull-back} $\phi^*D$ of $D$ is defined as follows. We define 
$\phi^*D$ to be the unique $\mathbb{Q}$-divisor on $X$ whose restriction to 
$\phi^{-1}(Y_{\textup{reg}})$ is $(\phi_{|Y_\textup{reg}})^*D_{|Y_\textup{reg}}$. This construction agrees with the usual pull-back if $D$ itself is $\mathbb{Q}$-Cartier.
\end{notation}

\begin{notation} \rm
Let $\phi \colon X \to Y$ be a dominant morphism of normal varieties. Suppose that 
$\phi$ is equidimensional. 
Write
$K_{X/Y}:=K_X-\phi^*K_Y$. We refer to it as the \emph{relative canonical divisor of $X$ over $Y$}.
Set $R(\phi)=\sum_{D} \Big((\phi)^*D-{\big((\phi)^*D\big)}_{\textup{red}}\Big)$
where $D$ runs through all prime divisors on $Y$. We refer to it
as the \emph{ramification divisor of $\phi$}.
\end{notation}

\begin{exmp}\label{exmp:can_div_fol_morphism} \rm
Let $\phi\colon X \to Y$ be a dominant morphism of normal varieties.
Suppose that $\phi$ is equidimensional, and let $\sG$ be the foliation on $X$ induced by $\phi$.
A straightforward computation shows that $K_\sG\sim K_{X/Y}-R(\phi)$, where $R(\phi)$ denotes the ramification divisor of $\phi$.
\end{exmp}

\begin{say}[Foliations described as pullbacks] \rm
Let $X$ and $Y$ be normal varieties, and let $\phi\colon X \to Y$ be a dominant morphism. Let 
$\sG$ be a foliation on $Y$. The \emph{pullback $\phi^{-1}\sG$ of $\sG$ under $\phi$} is 
defined as follows. We define $\phi^{-1}\sG$ to be the unique foliation on $X$ whose restriction to 
$\phi^{-1}(Y_{\textup{reg}})$ is the saturation of 
$$\big(d(\phi_{|\phi^{-1}(Y_{\textup{reg}})})\big)^{-1}\big((\phi_{|\phi^{-1}(Y_{\textup{reg}})})^*\sG_{|Y_{\textup{reg}}}\big)$$ in ${T_X}_{|\phi^{-1}(Y_{\textup{reg}})}$, where 
$d(\phi_{|\phi^{-1}(Y_{\textup{reg}})})\colon {T_X}_{|\phi^{-1}(Y_{\textup{reg}})} \to (\phi_{|\phi^{-1}(Y_{\textup{reg}})})^*T_{Y_{\textup{reg}}}$ denotes the natural map.
\end{say}

A straightforward computation shows the following.

\begin{lemma}\label{lemma:pull_back_fol_and_finite_cover}
Let $f \colon Z\to X$ be a finite cover of normal varieties, and let $\sG$ be a foliation on $X$. Suppose that the branch locus of $f$ is $\sG$-invariant. Then $f^{-1}\sG \cong (f^*\sG)^{**}$.
\end{lemma}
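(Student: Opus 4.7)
The plan is to exhibit the desired isomorphism on an open subset $W\subset Z$ with $\codim_Z(Z\setminus W)\ge 2$ and then invoke reflexivity: $f^{-1}\sG$ is saturated in $T_Z$ and hence reflexive, while $(f^*\sG)^{**}$ is reflexive by construction, so any isomorphism on such a $W$ extends uniquely to $Z$.

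First I would handle the étale locus $V\subset Z$ of $f$. There $df\colon T_Z\to f^*T_X$ is an isomorphism, so $(df)^{-1}(f^*\sG)|_V$ is canonically identified with $f^*\sG|_V$; moreover this preimage is already saturated in $T_Z|_V$, since $T_X/\sG$ is torsion free on $X_{\textup{reg}}$. This yields a canonical isomorphism $f^*\sG|_V\cong f^{-1}\sG|_V$.

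The main step is then to extend this identification across a generic point of the ramification divisor $R\subset Z$. At a general point $p\in R$, I would choose étale local coordinates $(z_1,\ldots,z_n)$ near $p$ and $(x_1,\ldots,x_n)$ near $f(p)$ in which $f$ takes the standard form $x_1=z_1^e$, $x_i=z_i$ for $i\ge 2$, with branch divisor $B=\{x_1=0\}$. The $\sG$-invariance of $B$ says that every local section of $\sG$ has its $\partial/\partial x_1$-coefficient divisible by $x_1$; hence one may pick a local frame $v_j = x_1 b_{j,1}\,\partial/\partial x_1 + \sum_{i\ge 2} a_{j,i}\,\partial/\partial x_i$ of $\sG$ for $j=1,\ldots,r$. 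A direct computation then shows that
\[
v_j' := \frac{z_1}{e}\,f^*b_{j,1}\,\partial/\partial z_1 + \sum_{i\ge 2} f^*a_{j,i}\,\partial/\partial z_i
\]
is a regular section of $T_Z$ near $p$ satisfying $df(v_j')=f^*v_j$, and that $v_j'|_{z_1=0} = \sum_{i\ge 2} a_{j,i}(0,z_2,\ldots)\,\partial/\partial z_i$. Since $\sG|_B\subset T_B$ is a rank-$r$ subbundle at a general point of $B$, the $v_j'$ remain $\sO_Z$-linearly independent modulo $z_1$, so $\langle v_1',\ldots,v_r'\rangle$ is a rank-$r$ subbundle of $T_Z$ on a neighborhood of $p$. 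A parallel computation shows $(df)^{-1}(f^*\sG)=\langle v_1',\ldots,v_r'\rangle$ there, and being saturated this coincides with $f^{-1}\sG$ near $p$.

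Putting everything together, the assignment $f^*v_j\mapsto v_j'$ defines an isomorphism $f^*\sG\cong f^{-1}\sG$ on an open subset of $Z$ with codimension-$\ge 2$ complement; reflexivity then upgrades this to $(f^*\sG)^{**}\cong f^{-1}\sG$ on $Z$. The main obstacle is the local computation at generic points of $R$: the regularity of the $\partial/\partial z_1$-coefficient of $v_j'$ is exactly equivalent to $x_1$ dividing the $\partial/\partial x_1$-coefficient of $v_j$, which is precisely what the $\sG$-invariance of $B$ provides. Without this hypothesis, $v_j'$ would acquire a pole of order $e-1$ along $R$ and the construction would break down.
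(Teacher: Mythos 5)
Your argument is correct, and it is precisely the ``straightforward computation'' that the paper leaves to the reader: the local normal form $x_1=z_1^e$ at a general point of the ramification divisor, the observation that $\sG$-invariance of the branch locus makes the $\partial/\partial x_1$-coefficients divisible by $x_1$ so that the lifted frame $v_j'$ is regular and pointwise independent along $\{z_1=0\}$, and the extension across a codimension-two set by reflexivity of both $f^{-1}\sG$ and $(f^*\sG)^{**}$. No gaps.
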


\begin{say}[Projection of foliations]\label{say:morphisms_and_foliations} \rm 
Let $X$ and $Y$ be complex manifolds, and let $\sG \subset T_X$ be a foliation 
of rank $r$ on $X$. Let also
$\phi\colon X \to Y$ be a dominant morphism. 
We say that $\sG$ is \emph{projectable under $\phi$} if, for a general point $y \in Y$, $d_x\phi(\sG_x)$ is independent of the choice of $x \in \phi^{-1}(y)$ and $\dim d_x\phi (\sG_x) = r$, where $d_x\phi\colon T_xX \to T_yY$ denotes the natural map. 
We claim that $\sG$ induces a foliation $\sG_Y$ of rank $r$ on $Y$. Indeed, we may assume without loss of generality that $\phi$ is smooth. Let $y \in Y$ be a general point, and let $s \colon W \to X$ be a section of $\phi$ over some open neighborhood $W$ of $y$ with respect to the Euclidean topology.
Since $\sG$ is projectable, $\sG_{|s(W)}\subset T_{s(W)}\subset (T_X)_{|s(W)}$, and
we must have $\sG_{|\phi^{-1}(W)} \cong (\phi_{|\phi^{-1}(W)})^*(s^*\sG)$. This immediately implies that $s^*\sG \subset T_W$
is independent of $s$, and that it is stable under the Lie bracket. Let $\sG_Y$ be the foliation on $Y$ such that 
$(\sG_Y)_{|W}:=s^*\sG$.
We will refer to $\sG_Y$ as the \textit{projection of $\sG$ under $\phi$}. 

Suppose from now on that $\phi$ is a proper morphism with connected fibers. Suppose furthermore that
$\det(\sG)\cong \phi^*\sL$ for some line bundle $\sL$ on $Y$.

The $r$-th wedge product of the inclusion $\sG \subset T_X$ gives rise to a non-zero 
global section $\tau \in H^0(X,\wedge^r T_X\otimes \phi^*\sL^*)$ mapping to a possibly zero global section $\tau_Y \in H^0(Y,\wedge^r T_Y\otimes \sL^*)$ under
the natural map
$$H^0(X,\wedge^r T_X\otimes \phi^*\sL^*)\lra H^0\big(X,\wedge^r (\phi^*T_Y)\otimes\phi^*\sL^*\big)\cong H^0(Y,\wedge^r T_Y\otimes \sL^*).$$ 
Under these assumptions, $\sG$ is projectable under $\phi$ if and only if 
$\tau_Y \neq 0$. Equivalently, $\sG$ is projectable under $\phi$ if and only if there exists $x \in X$ such that $\dim d_x\phi (\sG_x) = r$.
Let $Z(\tau_Y)$ be the zero set of $\tau_Y$. Then 
$\phi\big( Z(\sG)\big) \subset Z(\tau_Y)$, and 
conversely, if $x \in X\setminus Z(\sG)$ and $\dim d_x\phi (\sG_x) = r$, then $\phi(x) \in Y\setminus Z(\tau_Y)$.

Finally, suppose that $\tau_Y\neq 0$, and let $\sG_Y$ be the projection of $\sG$ under $\phi$.
We claim that $-K_{\sG_Y} - c_1(\sL)$ is effective. Indeed, 
let $U \subset Y$ be the Zariski open set where $\sG_Y$ is a subbundle of $T_Y$. The natural map
$d\phi\colon T_X \to \phi^*T_Y$ induces a morphism $\sG_{|\phi^{-1}(U)} \to (\phi_{|\phi^{-1}(U)})^*(\sG_Y)_{|U}$
which is generically injective. In particular, we must have $(\tau_Y)_{|U} \in H^0\big(U,(\det(\sG_Y)\otimes \sL^*)_{|U}\big)$. 
Since $Y \setminus U$ has codimension at least two, we conclude that
$-K_{\sG_Y} - c_1(\sL)$ is effective, proving our claim.
Suppose moreover that $-K_{\sG_Y} \equiv c_1(\sL)$. Then $(\det(\sG_Y)\otimes \sL^*)_{|U}\cong \sO_U$ and $Z(\tau_Y) \subset Y \setminus U$. It follows that $Z(\tau_Y)$ has codimension at least two, and hence
$Z(\sG_Y)=Z(\tau_Y)$. In particular, we must have $\phi\big(Z(\sG)\big) \subset Z(\tau_Y)=Z(\sG_Y)$.
\end{say}

\begin{rem}\label{remark:fano_mori_contraction_line_bundle} \rm
Let $X$ be a complex projective manifold, and let $\sG \subset T_X$ be a foliation on $X$. Let also
$\phi\colon X \to Y$ be a proper morphism with connected fibers onto a normal projective variety $Y$. 
Suppose that $-K_X$ is $\phi$-ample. If $K_\sG$ is $\phi$-numerically trivial, then there exists a line bundle $\sL$ on $Y$ such that
$\det(\sG)\cong \phi^*\sL$ (see \cite[Lemma 3.2.5]{kmm}).
\end{rem}

\section{Bott (partial) connection and applications}

In this section we provide technical tools for the proof of the main result.

\begin{say}[Bott connection]\label{partial_connection} \rm
Let $X$ be a complex manifold, let $\sG\subset T_X$ be a regular foliation, and set $\sN=T_X/\sG$. Let $p\colon T_X\to \sN$ denotes the natural projection. For sections $U$ of $\sN$, $T$ of $T_X$, and $V$ of $\sG$ over some open subset of $X$ with 
$U=p(T)$, set $D_V U=p([V,U])$. This expression is well-defined,
$\sO_X$-linear in $V$ and satisfies the Leibnitz rule 
$D_V(fU)=fD_V U+(Vf)U$ so that $D$ is a $\sG$-connection on $\sN$
(see \cite{baum_bott70}). We refer to it as the \textit{Bott connection} on $\sN$.

\end{say}

\begin{lemma}\label{lemma:normal_bundle_restricted_leaf}
Let $X$ be a complex manifold, and let $\sG\subset T_X$ be a regular foliation 
with normal bundle $\sN=T_X/\sG$. Let $f\colon Z \to X$ be a compact manifold, and 
suppose that $f(Z)$ is tangent to $\sG$. Then $f^*\sN$ admits a flat holomorphic connection. In particular,  Chern classes of $f^*\sN$ vanish.
\end{lemma}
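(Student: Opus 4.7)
The plan is to transfer the Bott partial connection on $\sN$ along $f$ to produce a genuine (full) holomorphic connection on $f^*\sN$, and then invoke the Atiyah class obstruction to conclude.

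First I would recall that the Bott connection $D$ defined in \ref{partial_connection} is a flat $\sG$-partial connection on $\sN$: indeed, for sections $V,W$ of $\sG$ and $U=p(T)$ of $\sN$, the integrability of $\sG$ together with the Jacobi identity gives
\[
D_V D_W U - D_W D_V U - D_{[V,W]} U = p\bigl([V,[W,T]] - [W,[V,T]] - [[V,W],T]\bigr)=0.
\]
Next, the hypothesis that $f(Z)$ is tangent to $\sG$ says precisely that the differential $df\colon T_Z\to f^*T_X$ factors through the subbundle $f^*\sG\subset f^*T_X$. Pulling back the Bott connection gives an $f^*\sG$-partial connection $f^*D$ on $f^*\sN$, and I would then define
\[
\nabla\colon f^*\sN \lra \Omega^1_Z \otimes f^*\sN,\quad \nabla_v(s):=(f^*D)_{df(v)}(s).
\]
This is $\sO_Z$-linear in $v$ since $df$ is, and it satisfies the Leibnitz rule because $f^*D$ does; hence $\nabla$ is a holomorphic connection on $f^*\sN$. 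Its curvature is obtained from that of $f^*D$ by composing with $\wedge^2 df\colon \wedge^2 T_Z\to \wedge^2 f^*\sG$, hence vanishes by flatness of $D$. Therefore $\nabla$ is flat.

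Finally, by Atiyah's theorem, the existence of a holomorphic connection on $f^*\sN$ implies that the Atiyah class of $f^*\sN$ vanishes, and the Chern--Weil description of Chern classes through the Atiyah class then yields $c_i(f^*\sN)=0$ in $H^i(Z,\Omega^i_Z)$ for every $i\ge 1$; since $Z$ is a compact complex manifold, this gives the vanishing in rational (in fact real) de Rham cohomology. The only delicate point is really setting up the pullback of a partial connection cleanly, which is why I would state the construction in the explicit form above rather than appealing to general nonsense.
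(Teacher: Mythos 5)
Your argument is correct and is essentially the paper's proof written out in full: the paper disposes of the lemma in one line by asserting that the Bott partial connection on $\sN$ induces a flat holomorphic connection on $f^*\sN$, which is exactly the construction you carry out (tangency of $f(Z)$ to $\sG$ makes $df$ factor through $f^*\sG$, so the $\sG$-partial connection pulls back to a full connection, flat by the Jacobi-identity computation). The concluding step via the Atiyah class, or directly via Chern--Weil for the flat connection, is the intended justification of the final sentence of the statement.
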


\begin{proof}
One readily checks that the (partial) Bott connection on $\sN$ gives a flat holomorphic connections on
$f^*\sN$.
\hfill $\Box$
\end{proof}

\begin{say}[The Atiyah class of a locally free sheaf]  \label{say:atiyah} \rm
Let $X$ be a complex manifold, and let $\sE$ be a locally free sheaf on $X$.
Let $J_X^1(\sE)$ be the sheaf of $1$-jets of $\sE$. 
As a sheaf of abelian groups on $X$, 
$J_X^1(\sE)\cong \sE\oplus (\Omega_X^1\otimes\sE)$,  and the $\sO_X$-module structure is given 
by $f(e,\alpha)=(fe,f\alpha-df\otimes e)$, where $f$, $e$ and $\alpha$ are 
local sections of $\sO_X$, $\sE$ and $\Omega_X^1\otimes\sE$, respectively. 
The \emph{Atiyah class} of $\sE$ 
(see \cite{atiyah57})
is defined to be the element 
$at(\sE)\in H^1\big(X,\Omega_X^1\otimes\sE\hspace{-0.07cm}\textit{nd}_{\sO_X}(\sE)\big)$ 
corresponding to the extension 
$$
0\to \Omega_X^1\otimes\sE \to J_X^1(\sE) \to \sE \to 0.
$$
\end{say}

The next result follows from the proof \cite[Proposition 3.3]{baum_bott70} and \cite[Corollary 3.4]{baum_bott70} (see also \cite[Lemma 6.3]{codim_1_del_pezzo_fols}). 

\begin{lemma}\label{lemma:atiyah_class_foliation}
Let $X$ be a complex manifold, and let $\sG\subset T_X$ be a regular foliation. Set $\sN:=T_X/\sG$.
Then $at(\sN)\in H^1\big(X,\Omega^1_X\otimes\sE\hspace{-0.07cm}\textit{nd}_{\sO_X}(\sN)\big)$ lies in the 
image of the natural map 
$$H^1\big(X,\sN^*\otimes\sE\hspace{-0.07cm}\textit{nd}_{\sO_X}(\sN)\big)\to
H^1\big(X,\Omega^1_X\otimes\sE\hspace{-0.07cm}\textit{nd}_{\sO_X}(\sN)\big).$$
\end{lemma}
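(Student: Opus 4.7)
The plan is to exploit the short exact sequence dual to $0\to \sG \to T_X \to \sN \to 0$, namely
$$0 \to \sN^*\to \Omega_X^1 \to \sG^* \to 0,$$
tensor it with $\sE\hspace{-0.07cm}\textit{nd}_{\sO_X}(\sN)$, and look at the associated long exact sequence of cohomology. The claim that $at(\sN)$ lies in the image of $H^1\!\big(X,\sN^*\otimes\sE\hspace{-0.07cm}\textit{nd}_{\sO_X}(\sN)\big)\to H^1\!\big(X,\Omega_X^1\otimes\sE\hspace{-0.07cm}\textit{nd}_{\sO_X}(\sN)\big)$ is then equivalent to showing that the image of $at(\sN)$ in $H^1\!\big(X,\sG^*\otimes\sE\hspace{-0.07cm}\textit{nd}_{\sO_X}(\sN)\big)$ vanishes.

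The key step is to recognize this image as the \emph{Atiyah--Bott class} of $\sN$ relative to $\sG$, i.e.\ the obstruction to the existence of a holomorphic $\sG$-connection on $\sN$. Concretely, pulling back the jet extension
$$0\to \Omega_X^1\otimes \sN \to J_X^1(\sN)\to \sN \to 0$$
along the inclusion $\sG\hookrightarrow T_X$ (equivalently, pushing out along $\Omega_X^1\to \sG^*$) yields an extension
$$0\to \sG^*\otimes \sN \to E \to \sN \to 0$$
whose class in $H^1\!\big(X,\sG^*\otimes \sE\hspace{-0.07cm}\textit{nd}_{\sO_X}(\sN)\big)$ is precisely the image of $at(\sN)$, and whose splittings are exactly the $\sG$-connections on $\sN$. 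This is the standard relative version of Atiyah's interpretation of his class recalled in~\ref{say:atiyah}.

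To conclude, I would invoke the Bott partial connection from~\ref{partial_connection}: the assignment $(V,U)\mapsto D_VU = p([V,\widetilde U])$, for $V$ a local section of $\sG$ and $\widetilde U$ any local lift of $U\in \sN$ to $T_X$, furnishes a canonical holomorphic $\sG$-connection on $\sN$. Hence the relative Atiyah class above admits a splitting and vanishes in $H^1\!\big(X,\sG^*\otimes\sE\hspace{-0.07cm}\textit{nd}_{\sO_X}(\sN)\big)$. By exactness of the long sequence, $at(\sN)$ is in the image of $H^1\!\big(X,\sN^*\otimes\sE\hspace{-0.07cm}\textit{nd}_{\sO_X}(\sN)\big)$, as claimed.

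The main point requiring care is the compatibility of the two descriptions of the image of $at(\sN)$: the cohomological one (pushout along $\Omega_X^1\to \sG^*$) and the geometric one (the obstruction to a $\sG$-connection). This is a diagram-chase with the jet sequence and its pullback to~$\sG$, and once it is set up, the vanishing follows immediately from the existence of the Bott connection; no further analysis is needed.
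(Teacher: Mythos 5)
Your argument is correct and is precisely the one the paper relies on: the paper simply cites Baum--Bott (Propositions 3.3 and 3.4 of \cite{baum_bott70}), whose proof is exactly the observation that the Bott partial connection of \ref{partial_connection} splits the image of $at(\sN)$ under the map induced by $\Omega_X^1 \to \sG^*$, so that the long exact sequence associated with $0\to\sN^*\to\Omega_X^1\to\sG^*\to 0$ tensored with $\sE\hspace{-0.07cm}\textit{nd}_{\sO_X}(\sN)$ gives the desired lifting. No discrepancy with the paper's route.
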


The first part of the statement of Lemma \ref{lemma:vanishing_coh_support} below is \cite[Lemma 6.4]{codim_1_del_pezzo_fols}. 
The proof of the second part of the statement is
similar to that of \cite[Lemma 6.4]{codim_1_del_pezzo_fols}, and so we leave the details to the reader (see also 
\cite[Theorem 1.14]{siu_trautmann}).
Notice that the statement of 
\cite[Lemma 6.4]{codim_1_del_pezzo_fols} is slightly incorrect.

\begin{lemma}\label{lemma:vanishing_coh_support}
Let $X$ be a complex projective manifold, and let $\sE$ be a vector bundle on $X$. Let $U \subset X$ be an open set. Suppose that $X \setminus U$ has codimension $\ge m+2$ for some integer $m\ge 0$. Then
\begin{enumerate}[\rm (1)]
\item $H^i(X,\sE)\cong H^i(U,\sE_{|U})$ for $0\le i \le m$, and
\item the natural map
$H^{m+1}(X,\sE)\to H^{m+1}(U,\sE_{|U})$ is injective.
\end{enumerate}
\end{lemma}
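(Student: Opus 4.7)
The plan is to prove both statements simultaneously via the long exact sequence of local cohomology. Set $Z := X \setminus U$, which is a closed subscheme of $X$ of codimension $\ge m+2$. The standard exact sequence reads
$$\cdots \to H^i_Z(X,\sE) \to H^i(X,\sE) \to H^i(U,\sE_{|U}) \to H^{i+1}_Z(X,\sE) \to \cdots$$
so the two conclusions follow at once from the vanishing $H^i_Z(X,\sE) = 0$ for $i \le m+1$.

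To obtain this vanishing, I would invoke the depth criterion of Grothendieck (SGA 2, expos\'e III): if $\sE$ is a coherent sheaf on a noetherian scheme $X$ and $Z \subset X$ is closed, then $\mathcal{H}^i_Z(\sE) = 0$ for $i < \mathrm{depth}_Z(\sE)$, and consequently $H^i_Z(X,\sE) = 0$ in the same range (using, e.g., the local-to-global spectral sequence together with the fact that the local cohomology sheaves $\mathcal{H}^j_Z(\sE)$ are supported on $Z$, or more directly the corresponding statement for local cohomology groups). Since $X$ is smooth and $\sE$ is locally free, $\sE$ is Cohen--Macaulay of depth $\dim X$ at every point; in particular $\mathrm{depth}_Z(\sE) \ge \mathrm{codim}_X(Z) \ge m+2$, which gives the desired vanishing.

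Plugging the vanishing into the long exact sequence, for $0 \le i \le m$ both $H^i_Z(X,\sE)$ and $H^{i+1}_Z(X,\sE)$ vanish, yielding the isomorphism in (1); for $i = m+1$ only the term $H^{m+1}_Z(X,\sE)$ is known to vanish, giving the injectivity in (2).

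The only mildly delicate point is making sure one uses the correct statement relating depth to the vanishing of the global (not just local) cohomology groups with supports; this is classical but worth a brief justification, since $H^i_Z(X,\sE)$ can in principle differ from $H^0(X,\mathcal{H}^i_Z(\sE))$. The issue is resolved by the local-to-global spectral sequence $H^p(X,\mathcal{H}^q_Z(\sE)) \Rightarrow H^{p+q}_Z(X,\sE)$, since vanishing of $\mathcal{H}^q_Z(\sE)$ for $q < m+2$ forces vanishing of the abutment in the same range. No projectivity of $X$ is actually needed, only the smoothness of $X$ and local freeness of $\sE$.
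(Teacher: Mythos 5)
Your proof is correct and is essentially the intended argument: the paper does not write out a proof but defers to \cite[Lemma 6.4]{codim_1_del_pezzo_fols} and \cite[Theorem 1.14]{siu_trautmann}, both of which rest on exactly the mechanism you use, namely the long exact sequence of cohomology with supports in $Z=X\setminus U$ together with the vanishing $H^i_Z(X,\sE)=0$ for $i<\mathrm{depth}_Z(\sE)$ and the depth bound $\mathrm{depth}_Z(\sE)\ge \codim_X Z\ge m+2$ for a locally free sheaf on a smooth variety. Your added care about passing from the vanishing of the local cohomology sheaves to that of the groups via the local-to-global spectral sequence, and your observation that projectivity is irrelevant, are both accurate; the only (harmless) imprecision is that at a non-closed point of $Z$ the depth of $\sE$ equals the local dimension rather than $\dim X$, but this still exceeds $\codim_X Z$, so the conclusion stands.
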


The following result generalizes \cite[Corollary 3.4]{baum_bott70}.
 
\begin{lemma}\label{lemma:vanishing_char_classes}
Let $X$ be a complex projective manifold, and let $\sG \subset T_X$ be a foliation of codimension $q$. 
Set $\sN:=T_X/\sG$. Suppose that the singular locus $Z(\sG)$ of $\sG$ has codimension $\ge q+2$ in $X$. Then   
\begin{enumerate}[\rm (1)]
\item $c_1(\sN)^{q}\in H^q(X,\Omega_X^q)$ lies in the image of the natural map
$H^q\big(X,\det(\sN)^*\big)\to H^q(X,\Omega_X^q)$, and 
\item $c_1(\sN)^{q+1}\equiv 0$.
\end{enumerate}
\end{lemma}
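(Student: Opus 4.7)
The plan is to establish both statements first on the regular locus $U := X \setminus Z(\sG)$, where $\sG_{|U}$ is a subbundle of $T_U$ with quotient $\sN_{|U}$, and then to promote the conclusions to $X$ using Lemma \ref{lemma:vanishing_coh_support}. The codimension hypothesis $\codim_X Z(\sG) \ge q+2$ is precisely what is required to apply that lemma both in degree $q$ (as an isomorphism) and in degree $q+1$ (as an injection).

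First, I would apply Lemma \ref{lemma:atiyah_class_foliation} on $U$ to obtain a class $\beta \in H^1\big(U, \sN_{|U}^* \otimes \sE\hspace{-0.07cm}\textit{nd}_{\sO_U}(\sN_{|U})\big)$ mapping to $at(\sN_{|U})$ along the inclusion $\sN_{|U}^* \hookrightarrow \Omega_U^1$. Taking trace yields $\gamma := \mathrm{tr}(\beta) \in H^1(U, \sN_{|U}^*)$ whose image in $H^1(U, \Omega_U^1)$ coincides, up to sign, with $c_1(\sN_{|U})$. The key sheaf-theoretic observation is that the natural composition
\[
\sN_{|U}^{*\otimes q} \hookrightarrow (\Omega_U^1)^{\otimes q} \twoheadrightarrow \Omega_U^q
\]
factors as
\[
\sN_{|U}^{*\otimes q} \twoheadrightarrow \wedge^q \sN_{|U}^* = \det(\sN_{|U})^* \hookrightarrow \wedge^q \Omega_U^1 = \Omega_U^q.
\]
Applied to the $q$-fold cup product $\gamma^{\cup q} \in H^q(U, \sN_{|U}^{*\otimes q})$, whose image in $H^q(U, \Omega_U^q)$ equals $\pm c_1(\sN_{|U})^q$, this exhibits $c_1(\sN_{|U})^q$ as lying in the image of $H^q(U, \det(\sN_{|U})^*) \to H^q(U, \Omega_U^q)$. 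Part (1) on $X$ then follows from Lemma \ref{lemma:vanishing_coh_support}(1) with $m = q$, which gives compatible isomorphisms $H^q(X, \det(\sN)^*) \cong H^q(U, \det(\sN_{|U})^*)$ and $H^q(X, \Omega_X^q) \cong H^q(U, \Omega_U^q)$.

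For (2), I would push the same construction one step further: the class $\gamma^{\cup(q+1)} \in H^{q+1}(U, \sN_{|U}^{*\otimes(q+1)})$ maps to $\pm c_1(\sN_{|U})^{q+1}$ in $H^{q+1}(U, \Omega_U^{q+1})$, and the analogous factorization routes it through $\wedge^{q+1}\sN_{|U}^* = 0$ since $\sN_{|U}$ has rank $q$. Hence $c_1(\sN_{|U})^{q+1}$ vanishes in $H^{q+1}(U, \Omega_U^{q+1})$. By Lemma \ref{lemma:vanishing_coh_support}(2) with $m = q$, the restriction $H^{q+1}(X, \Omega_X^{q+1}) \to H^{q+1}(U, \Omega_U^{q+1})$ is injective, so $c_1(\sN)^{q+1} = 0$ in $H^{q+1}(X, \Omega_X^{q+1})$. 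Since $c_1(\sN)$ is a $(1,1)$-class, $c_1(\sN)^{q+1}$ is purely of Hodge type $(q+1, q+1)$, and this vanishing therefore yields $c_1(\sN)^{q+1} \equiv 0$ numerically.

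The step to handle carefully is the sheaf-theoretic factorization of $\sN^{*\otimes q} \to \Omega^q$ through $\det(\sN)^*$, together with the identification of the image of the cup-product class $\gamma^{\cup q}$ with $\pm c_1(\sN_{|U})^q$; once these algebraic facts are pinned down the remainder of the argument is a formal combination of Lemmas \ref{lemma:atiyah_class_foliation} and \ref{lemma:vanishing_coh_support}.
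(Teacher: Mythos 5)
Your proposal is correct and follows essentially the same route as the paper: Lemma \ref{lemma:atiyah_class_foliation} (together with Atiyah's identification of $c_1$ with the trace of the Atiyah class) produces a class in $H^1(U,\sN_{|U}^*)$ lifting $c_1(\sN_{|U})$ on $U:=X\setminus Z(\sG)$, cup products factored through $\wedge^q\sN_{|U}^*=\det(\sN_{|U})^*$ and $\wedge^{q+1}\sN_{|U}^*=0$ give the two conclusions on $U$, and Lemma \ref{lemma:vanishing_coh_support} transfers them to $X$. You merely spell out the trace and exterior-power factorization steps that the paper leaves implicit.
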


\begin{proof}
Set $U:=X \setminus Z(\sG)$.
By Lemma \ref{lemma:atiyah_class_foliation} and \cite[Theorem 6]{atiyah57},
$c_1(\sN_{|U})\in H^1(U,\Omega_U^1)$ lies in the image of the natural map
$$H^1\big(U,\sN_{|U}^*\big)\to H^1(U,\Omega_U^1).$$ This implies that 
$c_1(\sN_{|U})^{q}\in H^q(U,\Omega_U^q)$ lies in the image of the natural map
$$H^q\big(U,\det(\sN_{|U})^*\big)\to H^q(U,\Omega_U^q),$$ and that $c_1(\sN_{|U})^{q+1}\equiv 0$.
On the other hand, by Lemma \ref{lemma:vanishing_coh_support} above, the restriction map $\det(\sN^*) \to \det(\sN^*_{|U})$ induces
an isomorphism $H^q\big(X,\det(\sN^*)\big)\cong H^q\big(U,\det(\sN^*_{|U})\big)$ as well as an injective map
$H^{q+1}\big(X,\det(\sN^*)\big)\to H^{q+1}\big(U,\det(\sN^*_{|U})\big)$. The lemma then follows easily.
\hfill $\Box$
\end{proof}

Lemma \ref{lemma:dimension_fiber_contraction} below will prove to be crucial. Before starting with the proof, we recall the basic facts concerning Mori theory.

\begin{say}[Fano-Mori contractions] \rm
Let $X$ be a complex projective manifold and consider the finite dimensional $\mathbb{R}$-vector space 
$$\N_1(X)=\big(\{1-\text{cycles}\}/\equiv\big)\otimes\mathbb{R},$$
where $\equiv$ denotes numerical equivalence. The \emph{Mori cone} of $X$ is the closure $\NE(X)\subset \N_1(X)$ of the cone 
spanned by classes of effective curves. 
Every face $V$ of 
$\NE(X)$ contained in $\{z \in \N_1(X) \, | \, K_X\cdot z<0\}$
corresponds to a surjective morphism with connected fibers $\phi\colon X \to Y$ onto a normal projective variety, which is called a \emph{Fano-Mori contraction}. The morphism $\phi$ contracts precisely those curves on $X$ with class in $V$ (see \cite[Theorem 3.2.1]{kmm}). 
Conversely, any morphism  $\phi\colon X \to Y$ with connected fibers onto a normal projective variety such that 
$-K_X$ is $\phi$-ample
arises in this way.
\end{say}

\begin{lemma}\label{lemma:dimension_fiber_contraction}
Let $X$ be a complex projective manifold, and let $\sG \subset T_X$ be a foliation 
of codimension $q$
on $X$. 
Suppose that $K_\sG\equiv 0$ and that the singular locus $Z(\sG)$ of $\sG$ has codimension $\ge q+2$.
Let $\phi\colon X \to Y$ be a Fano-Mori contraction, and let $F$ be an irreducible component 
of some fiber of $\phi$.
Then $\dim F \leq q$. 
If $\dim F > \dim X - \dim Y$ and $\dim F \ge \dim \phi^{-1}(y)$ for all $y \in Y$, then we have $\dim F < q$.
\end{lemma}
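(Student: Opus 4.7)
The plan is to handle the two inequalities separately. For Part~1, I would use a cutting argument based on the ampleness of $-K_X|_F$, which follows from the $\phi$-ampleness of $-K_X$ together with $F$ lying in a fiber of $\phi$. If $\dim F \ge q+1$, then cutting $F$ with $\dim F - q -1$ general members of a very ample linear system $|H|$ on $X$ produces a subvariety $F' \subset F$ of pure dimension $q+1$ on which $-K_X|_{F'}$ is still ample, so $(-K_X)^{q+1} \cdot H^{\dim F - q -1}\cdot [F] = (-K_X|_{F'})^{q+1} > 0$. On the other hand, $K_\sG \equiv 0$ gives $c_1(\sN) \equiv -K_X$, and Lemma~\ref{lemma:vanishing_char_classes}(2) yields $(-K_X)^{q+1} \equiv 0$, a contradiction.

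For Part~2, I assume toward a contradiction that $\dim F = q$ and set $y_0 := \phi(F)$. The first step is to show that no compact curve in $F$ can be tangent to $\sG$: if $C \subset F$ were such a curve, then Lemma~\ref{lemma:normal_bundle_restricted_leaf} applied to the normalization $\tilde C \to C$ would give a flat holomorphic connection on the pullback of $\sN$, hence $c_1(\sN) \cdot C = 0$; combined with $K_\sG \equiv 0$ this forces $-K_X \cdot C = 0$, contradicting the ampleness of $-K_X|_F$. From this I deduce that $T_F \cap \sG|_F = 0$ at a general point of $F$, since any positive-rank involutive subsheaf of $T_F$ would produce, upon passing to the Zariski closure of one of its leaves and cutting with a general hyperplane, a compact curve in $F$ tangent to $\sG$ at a general point, hence everywhere by closedness of tangency. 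Thus $T_F \to \sN|_F$ is generically an isomorphism between sheaves of rank $q$, i.e., $\sG$ is transverse to $F$ at a general smooth point.

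The final contradiction comes from a leaf-to-fiber dimension count. Pick a general $x_0 \in F$ where $\sG$ is regular, $F$ is smooth, and the transversality holds; the hypothesis $\dim F \ge \dim \phi^{-1}(y)$ for all $y$ ensures $T_{x_0}F = \ker d_{x_0}\phi$, so $d_{x_0}\phi$ is injective on $\sG_{x_0}$ and $\phi$ restricts to an \'etale map on the leaf $L_{x_0}$ of $\sG$ through $x_0$. For every $y$ close to $y_0$ and every $x$ in a small analytic neighborhood $U \subset F$ of $x_0$, the leaf $L_x$ meets $\phi^{-1}(y)$ in a unique nearby point $\sigma_y(x)$, defining a holomorphic map $\sigma_y \colon U \to \phi^{-1}(y)$. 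The hypothesis $\dim F > \dim X - \dim Y$ implies that the generic fibers of $\phi$ have dimension $r_0 := \dim X - \dim Y < q$, so the locus $S = \{y : \dim \phi^{-1}(y) = q\}$ is a proper closed subset of $Y$; choosing $y \notin S$ close to $y_0$ gives $\dim \phi^{-1}(y) = r_0 < q = \dim U$, and the fibers of $\sigma_y$ must then have dimension at least $q - r_0 > 0$. But the fiber of $\sigma_y$ over $z$ equals $L_z \cap U$, which has dimension $0$ by transversality of $L_z$ and $F$ at points of the open where Step~B holds. This is the desired contradiction.

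The main obstacle will be the step that promotes the non-existence of compact curves in $F$ tangent to $\sG$ to the pointwise vanishing of $T_F \cap \sG|_F$ at a general point: a positive-rank involutive subsheaf typically has transcendental leaves, so producing a compact tangent curve requires passing to the Zariski closure of a leaf and exploiting that ``tangency to $\sG$'' is a closed condition in order to globalize generic tangency to tangency on a whole algebraic curve.
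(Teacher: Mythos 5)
Your argument for the first inequality is correct and is essentially the paper's: $c_1(\sN)\equiv -K_X$ and Lemma \ref{lemma:vanishing_char_classes}(2) kill $(-K_X)^{q+1}$, which is incompatible with the ampleness of $-K_X|_F$ on a subvariety of dimension $\ge q+1$. The second inequality, however, is where your proof genuinely departs from the paper, and both of the new steps have gaps. In Step~B you claim that a positive-rank involutive subsheaf $\sK=T_F\cap\sG|_F$ produces a compact curve in $F$ tangent to $\sG$ by ``passing to the Zariski closure of a leaf and cutting with a general hyperplane.'' This does not work: the Zariski closure $Z$ of a leaf is $\sK$-invariant, but a general hyperplane section of $Z$ is not tangent to $\sK$ at \emph{any} point unless $\dim Z=1$ (the tangent directions of a general complete-intersection curve in $Z$ are not contained in the rank-$(\textup{rk}\,\sK)$ subbundle $\sK$), so there is no ``generic tangency'' to globalize. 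Producing an algebraic curve tangent to a foliation with $K_\sG\equiv 0$ is precisely the open problem recorded as Question \ref{question:foliation_rational_curve}; the paper only resolves it when $\dim F\le 2$ (Proposition \ref{prop:projectable_criterion}, via \cite[Proposition 7.5]{fano_fols} and Lemma \ref{lemma:global_vector_field_rat_curve}), and for the present lemma it deliberately avoids the issue.

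Step~C has an independent gap even if transversality were granted. The map $\sigma_y$ is not defined on all of $U$: since $d_{x_0}\phi$ is injective on $\sG_{x_0}$, the image $\phi(L_x^{\textup{loc}})$ of a local leaf is an immersed germ of dimension $\dim X-q$, whereas the hypothesis $\dim F>\dim X-\dim Y$ forces $\dim Y>\dim X-q$; hence a general $y$ near $y_0$ (in particular one chosen outside your set $S$) meets $\phi(L_x^{\textup{loc}})$ for only a lower-dimensional set of $x\in U$. The domain of $\sigma_y$ may therefore have dimension $<q$, and the count ``$\dim U=q>\dim\phi^{-1}(y)$ with discrete fibers'' yields no contradiction. (There is also a smaller issue: when $2q\le\dim X-2$ the hypothesis $\codim Z(\sG)\ge q+2$ does not prevent $F\subset Z(\sG)$, so a ``general point of $F$ where $\sG$ is regular'' need not exist.) The paper's proof of the strict inequality is instead cohomological: it uses Lemma \ref{lemma:vanishing_char_classes}(1) to realize $c_1(X)^q$ as the image of a class in $H^q\big(X,\det(\sN)^*\big)$, notes $\det(\sN)^*|_F\cong\sO(K_X)|_F$, and invokes the Andreatta--Wi\'sniewski vanishing $H^q\big(F,\sO(K_X)|_F\big)=0$ --- whose hypotheses are exactly your two dimension conditions --- to force $\big(c_1(X)|_{\widehat F}\big)^q=0$, contradicting ampleness. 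You would need to import that vanishing theorem, or something equally global, to close your argument.
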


\begin{proof}
Set $\sN:=T_X/\sG$. 
By Lemma \ref{lemma:vanishing_char_classes}, we have
$\Big(c_1\big(\det(\sN)\big)_{|F}\Big)^{q+1} \equiv 0$. This implies that $\dim F \le q$ since 
$c_1\big(\det(\sN)_{|F}\big)\equiv {-K_X}_{|F}$ is ample by assumption.

Suppose from now on that $\dim F > \dim X - \dim Y$ and that 
$\dim F \ge \dim \phi^{-1}(y)$ for any $y \in Y$. 
We argue by contradiction and assume that $\dim F = q$.
Let $\nu\colon\widehat{F} \to F$ be a resolution of singularities. Consider the
following commutative diagram:

\centerline{
\xymatrix{
H^q\big(X,\det(\sN)^*\big) \ar[rr]\ar[d] && H^q(X,\Omega_X^q)\ar[d] &\\
H^q\big(F,\det(\sN)^*_{|F})\big)\ar[r] &H^q\big(\widehat{F},\nu^*\det(\sN)^*_{|F})\big)\ar[r] & 
H^q(\widehat{F},\nu^*{\Omega_X^q}_{|F})\ar[r] & H^q(\widehat{F},\Omega_{\widehat{F}}^q).
}
}
\noindent By Lemma \ref{lemma:vanishing_char_classes} again,
$c_1(X)^{q}\in H^q(X,\Omega_X^q)$ lies in the image of the map
$H^q\big(X,\det(\sN)^*\big)\to H^q(X,\Omega_X^q)$. 
Recall from \cite[Lemma 3.2.5]{kmm} that there exists a line bundle $\sL$ on $Y$ such that
$\det(\sG)\cong \phi^*\sL$, or equivalently
$\det(\sN)\cong \sO_X(-K_X)\otimes \phi^*\sL^*$.
By \cite[Proposition 1.7]{andreatta_wisniewski_view}, the group
$H^q\big(F,{\sO(K_X)}_{|F})\big)$ vanishes.
It follows that the group $H^q\big(F,\det(\sN)^*_{|F})\big)$
vanishes as well.
This implies that
$\big({c_1(X)}_{|\widehat{F}}\big)^{q} \equiv 0$, yielding a contradiction as before. This completes the proof of the lemma.
\hfill $\Box$
\end{proof}

\section{Projectable foliations under Fano-Mori contractions}

The proof of the main result relies on a criterion that guarantees that a given
foliation is projectable under a morphism (see \ref{say:morphisms_and_foliations} for this notion), which we establish now.

\begin{prop}\label{prop:projectable_criterion}
Let $X$ be a complex projective manifold, and let $\phi\colon X \to Y$ be a Fano-Mori contraction with $\dim X - \dim Y \le 2$.
Let $\sG \subset T_X$ be a foliation of positive rank $r$ such that $K_\sG$ is $\phi$-numerically trivial, and
let $\tau_Y \in H^0(Y,\wedge^r T_Y\otimes \sL^*)$ be the twisted field as in 
\ref{say:morphisms_and_foliations} (see Remark \ref{remark:fano_mori_contraction_line_bundle}).
Pick $y\in Y$ and set $F:=\phi^{-1}(y)$. Suppose furthermore that 
$F\subset X \setminus Z(\sG)$ and that $\phi$ is smooth along $F$. Then $y\not\in Z(\tau_Y)$.
\end{prop}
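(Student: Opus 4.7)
Argue by contradiction. Suppose $y\in Z(\tau_Y)$. By the discussion in \ref{say:morphisms_and_foliations}, this means $d_x\phi|_{\sG_x}$ has rank $<r$ for every $x\in F$, equivalently $\sG_x \cap T_{F,x}\ne 0$ for every $x\in F$ (using that $\phi$ is smooth along $F$, so $T_{X/Y}|_F = T_F$ and $\dim F = \dim X - \dim Y \le 2$). The strategy is to combine Lemma \ref{lemma:normal_bundle_restricted_leaf} with the ampleness of $-K_X|_F$.

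The key preliminary observation is: no compact curve $C \subset F$ can be tangent to $\sG$. Indeed, setting $\sN := T_X/\sG$, we have $c_1(\sN) = K_\sG - K_X$, and since $K_\sG$ is $\phi$-numerically trivial while $-K_X$ is $\phi$-ample, $c_1(\sN)\cdot C = -K_X\cdot C > 0$ for any curve $C$ contracted by $\phi$. On the other hand, if $C$ were tangent to $\sG$, Lemma \ref{lemma:normal_bundle_restricted_leaf} applied to a resolution of $C$ would force $c_1(\sN)\cdot C = 0$, a contradiction. The same argument rules out any positive-dimensional compact submanifold of $F$ tangent to $\sG$.

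\emph{Case 1 (full tangency).} Suppose $T_F \subset \sG|_F$ at every point, so that $F$ itself is tangent to $\sG$. Applying Lemma \ref{lemma:normal_bundle_restricted_leaf} with $Z = F$ gives that $\sN|_F$ has trivial Chern classes; but $c_1(\sN|_F) \equiv -K_X|_F$ is ample on the Fano variety $F$, contradiction. This disposes of $\dim F \le 1$ automatically (where $T_{F,x}$ is a line, so $\sG_x\cap T_{F,x}\ne 0$ forces $T_{F,x}\subset \sG_x$), and the subcase of $\dim F = 2$ where $\sG|_F\cap T_F = T_F$ everywhere.

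\emph{Case 2 (partial tangency).} The remaining case is $\dim F = 2$ with $\sG|_F\cap T_F$ of generic rank exactly $1$. Let $\sK \subset T_F$ be the saturation of $\sG|_F\cap T_F$ in $T_F$; since $F$ is a smooth surface, $\sK$ is a line subbundle of $T_F$ with $\sK \subset \sG|_F$, hence defines a regular rank-$1$ foliation on the smooth Fano surface $F$. I would then aim to produce a compact algebraic leaf $L$ of $\sK$ on $F$: such an $L$ is tangent to $\sK \subset \sG|_F$, contradicting the key observation above. To exhibit $L$, one uses that $\sK$ is \emph{regular} on $F$: the Baum--Bott formula then gives $c_1(T_F/\sK)^2 = 0$, so $c_1(T_F/\sK)$ is a nonzero isotropic class on the rational surface $F$ (note: if $c_1(T_F/\sK)\equiv 0$ then $c_1(\sK) \equiv -K_F$, which combined with regularity of the foliation can be excluded case by case, e.g., no such subbundle exists on $\mathbb{P}^2$). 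By the Hodge index theorem together with the structure of effective classes of zero self-intersection on del Pezzo surfaces, such an isotropic class arises from a $\mathbb{P}^1$-fibration $F \to B$, and $\sK$ is the relative tangent foliation of this fibration. Its leaves are the fibers, which are compact rational curves, yielding the desired contradiction.

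\textbf{Main obstacle.} The hard part is Case~2: ruling out a regular rank-$1$ foliation on the del Pezzo fiber $F$ that has no compact leaf. The plan is to combine the Baum--Bott constraint $c_1(T_F/\sK)^2 = 0$ with the positivity of $-K_F$ to force $\sK$ to arise from a ruling; alternatively, one may work upstairs on a neighborhood $V$ of $F$ where $\phi$ is smooth, observing that the Lie-closed intersection $\sG\cap T_{V/Y}$ (when of positive generic rank) saturates to a sub-foliation of $\sG$ tangent to the fibers of $\phi$, whose leaves inside $F$ are then constrained to be algebraic by arguments specific to the setup.
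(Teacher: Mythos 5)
Your reduction to the two cases, and your Case 1, follow the paper's proof: the paper also argues by contradiction, also uses the Bott-connection lemma (Lemma \ref{lemma:normal_bundle_restricted_leaf}) against the $\phi$-ampleness of $-K_X$ to rule out compact subvarieties of $F$ tangent to $\sG$, and also disposes of the full-tangency case this way. The problem is Case 2, which is the heart of the matter and where your argument has a genuine gap: the sheaf $\sK=\sG_{|F}\cap T_F$ is indeed saturated in $T_F$ (its quotient injects into the locally free sheaf $(T_X/\sG)_{|F}$) and hence is a line bundle, but it is \emph{not} in general a subbundle of $T_F$. Regularity of $\sK$ would require the fibrewise rank of $d_x\phi_{|\sG_x}$ to be constant along $F$, and nothing prevents $\sG_x\cap T_{F,x}$ from jumping from dimension $1$ to dimension $2$ at isolated points; at such a point $T_F/\sK$ is torsion-free but not locally free, i.e.\ $\sK$ is a singular foliation on $F$. (A local model: $\sG=\langle \partial_u+v\partial_s,\ \partial_v+u\partial_s\rangle$ on $\mathbb{C}^4_{u,v,s,t}$ with $\phi=(s,t)$ is regular and involutive, yet induces on $F=\{s=t=0\}$ the foliation generated by $u\partial_u-v\partial_v$, singular at the origin.) Without regularity the Baum--Bott vanishing $c_1(T_F/\sK)^2=0$ fails --- on a surface it requires the singular set to have codimension $\ge 3$, i.e.\ to be empty --- so the Hodge-index/ruling argument never gets started. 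The subsequent steps (``excluded case by case'', ``arises from a $\mathbb{P}^1$-fibration'') are also only sketched.

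The paper circumvents this entirely by not saturating inside $T_F$: it considers the kernel $\sK$ and image $\sQ$ of the bundle map $\xi\colon\sG_{|F}\to\sN_{F/X}\cong\sO_F^{\oplus m}$, notes that $\det(\sQ)\cong\sO_F(-D)$ with $D$ effective and hence $\det(\sK)\cong\sO_F(D)$, and then splits on $D$. If $D\neq 0$, \cite[Proposition 7.5]{fano_fols} --- which applies to possibly singular foliations with effective nonzero anticanonical class --- produces a rational curve in $F$ tangent to $\sK\subset\sG_{|F}$. If $D=0$ and $\textup{rank}\,\sK=1$, then $\sK\cong\sO_F$ gives an honest global vector field on the rational surface $F$, and the elementary Lemma \ref{lemma:global_vector_field_rat_curve} (which tolerates zeros of the field) supplies an invariant rational curve. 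Either output contradicts Lemma \ref{lemma:normal_bundle_restricted_leaf} exactly as in your key observation. If you want to salvage your route, you would need to replace ``$\sK$ is regular'' by an argument valid for singular rank-one foliations on del Pezzo surfaces, at which point you are essentially forced back to a statement like \cite[Proposition 7.5]{fano_fols} or the vector-field lemma.
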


Before proving Proposition \ref{prop:projectable_criterion}, we note the following immediate corollary. See Section \ref{say:morphisms_and_foliations} for the notion of projectable foliation.

\begin{cor}\label{cor:projectable_criterion}
Let $X$ be a complex projective manifold, and let $\sG \subset T_X$ be a foliation. 
Let $\phi\colon X \to Y$ be a Fano-Mori contraction with $\dim X - \dim Y \le 2$.
Suppose that $K_\sG\equiv 0$ and that $\dim Z(\sG)\le \dim Y-1$.
Then $\sG$ is projectable under $\phi$.
\end{cor}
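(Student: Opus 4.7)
\medskip

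\noindent\textbf{Proof proposal.}
The plan is to deduce the corollary directly from the preceding proposition together with a generic fiber argument. Recall from \ref{say:morphisms_and_foliations} that, once we know $\det(\sG) \cong \phi^*\sL$ for some line bundle $\sL$ on $Y$, projectability of $\sG$ under $\phi$ is equivalent to the non-vanishing of the associated twisted field $\tau_Y \in H^0(Y,\wedge^r T_Y \otimes \sL^*)$. So the whole problem reduces to exhibiting a single point $y \in Y$ with $y \notin Z(\tau_Y)$, at which point the proposition supplies exactly the required criterion.

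First I would reduce to the situation of Proposition \ref{prop:projectable_criterion}. We may assume the rank $r$ of $\sG$ is positive, since otherwise projectability is vacuous. The hypothesis $K_\sG \equiv 0$ is stronger than $K_\sG$ being $\phi$-numerically trivial, so Remark \ref{remark:fano_mori_contraction_line_bundle} produces a line bundle $\sL$ on $Y$ with $\det(\sG) \cong \phi^*\sL$ and the twisted field $\tau_Y$ is defined.

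Next I would produce a fiber that satisfies the two hypotheses of the proposition. Since $\phi$ is a proper dominant morphism between smooth varieties in characteristic zero, generic smoothness yields a dense open subset $V \subset Y$ over which $\phi$ is smooth. On the other hand, $Z(\sG)$ is closed in $X$ and $\phi$ is proper, so $\phi(Z(\sG))$ is closed in $Y$, and the dimension bound $\dim Z(\sG) \le \dim Y - 1$ forces
\[
\dim \phi(Z(\sG)) \le \dim Z(\sG) \le \dim Y - 1,
\]
so $\phi(Z(\sG))$ is a proper closed subset of $Y$. Therefore $V \setminus \phi(Z(\sG))$ is a non-empty open subset of $Y$; pick any $y$ in it and set $F := \phi^{-1}(y)$. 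By construction $\phi$ is smooth along $F$, and $y \notin \phi(Z(\sG))$ means exactly $F \cap Z(\sG) = \emptyset$, i.e.\ $F \subset X \setminus Z(\sG)$.

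Finally, applying Proposition \ref{prop:projectable_criterion} to this $y$ gives $y \notin Z(\tau_Y)$, hence $\tau_Y \neq 0$; by the discussion in \ref{say:morphisms_and_foliations} this is equivalent to $\sG$ being projectable under $\phi$. There is no real obstacle here, since all the technical content has been absorbed into Proposition \ref{prop:projectable_criterion}; the only point requiring some care is checking that the two hypotheses of that proposition can simultaneously be arranged for a general $y$, which is what the dimension bound on $Z(\sG)$ is designed to guarantee. \hfill $\Box$
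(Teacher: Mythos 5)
Your argument is correct and is exactly the intended one: the paper states the corollary as an ``immediate'' consequence of Proposition \ref{prop:projectable_criterion}, and the content you supply --- generic smoothness plus the observation that $\dim Z(\sG)\le \dim Y-1$ makes $\phi\big(Z(\sG)\big)$ a proper closed subset, so a general fiber satisfies both hypotheses of the proposition --- is precisely the omitted verification. Nothing to add.
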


\begin{proof}[Proof of Proposition \ref{prop:projectable_criterion}]
Set $m:=\dim Y$, and consider the following commutative diagram:

\centerline{
\xymatrix{
 & T_F \ar@{^{(}->}[d] \\
 \sG_{|F}\ar@{^{(}->}[r]\ar@/_1pc/[dr]^-{\xi} & {T_X}_{|F}\ar@{->>}[d] \\
 & \sN_{F/X} \cong \sO_F^{\oplus m}.
}
}
\noindent  
We argue by contradiction and assume that $\xi$ has generic rank 
$< r$. We denote by $\sK$ its kernel, and by $\sQ$ its image. 
Notice that $\sK=\sG_{|F}\cap T_F \subset T_F$ is a foliation on $F$.
Since 
$\sQ \subset \sN_{F/X} \cong \sO_F^{\oplus m}$, $\det(\sQ)\cong\sO_F(-D)$
for some effective divisor $D$ on $F$. By the adjunction formula, 
$-K_F\sim -{K_X}_{|F}$ is ample. It follows that $\det(\sG_{|F})\cong\sO_F$, and hence 
$\det(\sK)\cong\sO_X(D)$. Suppose first that $D \neq 0$.
Applying \cite[Proposition 7.5]{fano_fols}, we see that there exists a (rational) curve $C$ contained in $F$ and tangent to $\sK$.
But this contradicts Lemma \ref{lemma:normal_bundle_restricted_leaf} since
$\deg\big(\det(\sN)_{|C}\big)=-K_X \cdot C>0$ where $\sN:=T_X/\sG$.
Therefore, we must have $D=0$. Now either $\sK \cong \sO_F$, or $\sK=T_F$ since $\dim F \le 2$ by assumption. 
If $\sK \cong \sO_F$, then by Lemma \ref{lemma:global_vector_field_rat_curve} below, there exists a (rational) curve contained in $F$ and tangent to $\sG$.
If $\sK = T_F$, then any rational curve contained in $F$ is tangent to $\sG$. 
In either case, this contradicts Lemma \ref{lemma:normal_bundle_restricted_leaf} again, completing the proof of the proposition.
\hfill $\Box$
\end{proof}

\begin{lemma}\label{lemma:global_vector_field_rat_curve}
Let $S$ be a smooth projective rational surface, and let $v$ be a nonzero holomorphic global vector field on 
$S$. Then there is a rational curve on $S$ tangent to $v$.
\end{lemma}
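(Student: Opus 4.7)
The plan is to reduce the statement to the case of minimal rational surfaces --- namely $\mathbb{P}^2$ and the Hirzebruch surfaces $\mathbb{F}_n$ --- by successively blowing down $(-1)$-curves, then verify the lemma directly in these two cases. For the reduction, suppose $\pi\colon S\to S'$ is the blowdown of a $(-1)$-curve $E\subset S$ to a smooth point $p\in S'$. The restriction $v_{|S\setminus E}$ transports via the isomorphism $\pi_{|S\setminus E}$ to a holomorphic vector field on $S'\setminus\{p\}$; since $T_{S'}$ is locally free and $\{p\}$ has codimension two in $S'$, this extends uniquely to a global vector field $v'$ on $S'$. Because $v$ is nonzero on the open dense subset $S\setminus E$, we have $v'\neq 0$. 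If some rational curve $C'\subset S'$ is tangent to $v'$, its strict transform $\widetilde{C}\subset S$ is birational to $C'$, hence rational, and is tangent to $v$ on a dense open subset of $\widetilde{C}$; the tangency extends to all of $\widetilde{C}$ by closedness of this condition. By induction on the Picard rank, it therefore suffices to prove the lemma for $S=\mathbb{P}^2$ or $S=\mathbb{F}_n$.

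For $S=\mathbb{P}^2$, the vector field $v$ corresponds to a nonzero matrix $A\in\mathfrak{sl}_3(\mathbb{C})\cong H^0(\mathbb{P}^2,T_{\mathbb{P}^2})$. Any such $A$ admits a two-dimensional invariant subspace of $\mathbb{C}^3$ (take the span of an eigenvector of $A$ together with a lift of an eigenvector of the induced action on the quotient), which projectivizes to an $A$-invariant projective line in $\mathbb{P}^2$; this is a rational curve tangent to $v$. For $S=\mathbb{F}_n$, let $\pi\colon\mathbb{F}_n\to\mathbb{P}^1$ be the ruling, and consider the image $v_Y\in H^0(\mathbb{P}^1,T_{\mathbb{P}^1})$ of $v$ under the map induced by $0\to T_{\mathbb{F}_n/\mathbb{P}^1}\to T_{\mathbb{F}_n}\to \pi^*T_{\mathbb{P}^1}\to 0$. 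If $v_Y=0$, then $v$ is vertical and every fiber of $\pi$ is a rational curve tangent to $v$. If $v_Y\neq 0$, then $v_Y$ vanishes at some $p_0\in\mathbb{P}^1$, since $T_{\mathbb{P}^1}\cong\mathcal{O}_{\mathbb{P}^1}(2)$ has positive degree; along $\pi^{-1}(p_0)$ the vector field $v$ takes values in $\ker d\pi$, so the fiber $\pi^{-1}(p_0)\cong\mathbb{P}^1$ is the desired rational curve tangent to $v$.

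The organizational heart of the argument is the birational reduction; once that is in place, both base cases are essentially direct calculations. No step presents a serious obstacle --- the nonvanishing of the pushforward under a blowdown is immediate, and the existence of an invariant plane for a $3\times 3$ matrix is elementary linear algebra. The most delicate verification is that tangency extends across the exceptional locus of a blowdown, but this follows from the closedness of the tangency condition along the one-dimensional curve $\widetilde{C}$.
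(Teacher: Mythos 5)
Your proof is correct, and its skeleton --- reduce to a minimal model by blowing down $(-1)$-curves, then treat $\mathbb{P}^2$ and the Hirzebruch surfaces directly --- is exactly the paper's. The differences are in the base cases. For $\mathbb{P}^2$ the paper simply cites Jouanolou for the existence of an invariant line, whereas you reprove it via the elementary observation that a matrix in $\mathfrak{sl}_3(\mathbb{C})$ has an invariant plane; both are fine. For the Hirzebruch surfaces the arguments genuinely diverge: the paper splits into $\mathbb{P}^1\times\mathbb{P}^1$ (decompose $v_0=p_1^*v_1+p_2^*v_2$ and take a fiber over a zero of $v_1$) and $\mathbb{F}_n$ with $n\ge 1$ (the unique negative section $C$ is automatically invariant because $\deg\mathcal{N}_{C/S}<0$ kills the map $\mathcal{O}_C\to\mathcal{N}_{C/S}$), while you give a single uniform argument via the ruling: project $v$ to $v_Y\in H^0(\mathbb{P}^1,T_{\mathbb{P}^1})$ and take either any fiber (if $v_Y=0$) or the fiber over a zero of $v_Y$ (which exists since $\deg T_{\mathbb{P}^1}=2>0$). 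Your version is slightly more economical in that it avoids the case split and, incidentally, is the same mechanism the paper uses elsewhere (projecting a foliation under a morphism and locating zeros of the projected twisted field). You are also more explicit than the paper about the two points it leaves implicit in the reduction step: that $v$ extends across the blown-down point because $T_{S'}$ is locally free and the point has codimension two, and that the strict transform of an invariant rational curve is again rational and invariant by density of the tangency locus. Both proofs are complete; yours is marginally more self-contained.
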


\begin{proof}
Denote by $S_0$ a minimal model of $S$, and let $v_0$ be the global holomorphic vector field induced by $v$ on $S_0$. Recall that either $S_0 \cong \mathbb{P}^2$, or $S_0$ is a Hirzebruch surface. If $S_0 \cong \mathbb{P}^2$, then it is well known that there is a line invariant by $v_0$ (see \cite[page 11]{jouanolou}). Suppose that $S_0$ is a Hirzebruch surface. 
If $S \cong \mathbb{P}^1\times \mathbb{P}^1$, then there exist global vector fields $v_1$ and $v_2$ on $\mathbb{P}^1$
such that $v_0=p_1^*v_1+p_2^*v_2$, where $p_1$ and $p_2$ denote the projections on the two factors $\mathbb{P}^1$.
Let $x_1 \in \mathbb{P}^1$ such that $v_1(x_1)=0$. Then the curve $\{x_1\}\times \mathbb{P}^1 \subset S$
is invariant by $v_0$. Finally, suppose that $S \not\cong \mathbb{P}^1\times \mathbb{P}^1$, and denote by 
$C \subset S$ the unique (rational) curve with negative self-intersection. The map 
$$\sO_C \overset{{v_0}_{|C}}{\longrightarrow} {T_S}_{|C} \to \sN_{C/S}$$
vanishes since $\deg(\sN_{C/S})=C\cdot C<0$, and thus $C$ is invariant by $v_0$.
\hfill $\Box$
\end{proof}

\begin{question}\label{question:foliation_rational_curve}
Let $X$ be a Fano manifold, and let $\sG$ be a foliation of positive rank on $X$. Suppose that 
$K_\sG\equiv 0$. Is it true that there exists a curve on $X$ tangent to $\sG$?
\end{question}

\begin{rem} \rm
If Question \ref{question:foliation_rational_curve} has a positive answer, then
Proposition \ref{prop:projectable_criterion} and Corollary \ref{cor:projectable_criterion} hold without restriction on $\dim X - \dim Y$. This is the case when 
$\sG$ is regular by \cite[Theorem 1.2]{druel_fol_fano}.
\end{rem}

\begin{prop}\label{prop:foliation_blow_up}
Let $X$ be a complex projective manifold, and let $\sG \subset T_X$ be a foliation 
of rank $r$ on $X$ with $K_\sG\equiv 0$. 
Suppose that $X$ is the blow-up of a complex projective manifold $Y$ along a codimension $2$ submanifold $B \subset Y$, and denote by $\phi\colon X \to Y$ the natural morphism.
Consider the 
projection $\sG_Y$ of $\sG$ on $Y$.
Then $K_{\sG_Y}\equiv 0$ and $Z(\sG_Y) = \phi\big(Z(\sG)\big)$.
Moreover, if $B \not\subset \phi\big(Z(\sG)\big)$, then $r \le \dim X - 2$ and
$B \setminus Z(\sG_Y)$ is invariant by $\sG_Y$.
\end{prop}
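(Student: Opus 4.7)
The plan is to establish the four conclusions in sequence, exploiting that $\phi$ is an isomorphism over $Y\setminus B$ and a Fano--Mori contraction (with $\dim X-\dim Y=0$) that collapses each fiber of $E\to B$ to a $\mathbb P^1$.

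First I would set up projectability and $K_{\sG_Y}\equiv 0$. Since $\sG$ is saturated in $T_X$, $\dim Z(\sG)\le\dim X-2\le\dim Y-1$, so Corollary \ref{cor:projectable_criterion} applies and produces the projected rank-$r$ foliation $\sG_Y$. Let $\sL$ be the line bundle on $Y$ provided by Remark \ref{remark:fano_mori_contraction_line_bundle}, so $\det\sG\cong\phi^*\sL$. Because $\phi$ identifies $\sG$ with $\sG_Y$ over $Y\setminus B$, the line bundles $\phi^*\sO_Y(K_{\sG_Y})$ and $\sO_X(K_\sG)$ differ by some multiple $aE$ of the exceptional divisor; intersecting with a line $\ell$ in a fiber of $E\to B$ and using $K_\sG\cdot\ell=0$ and $E\cdot\ell=-1$ forces $a=0$. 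The injectivity of $\phi^*$ on $N^1$ then yields both $K_{\sG_Y}\equiv 0$ and $c_1(\sL)\equiv 0$.

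Second, because $-K_{\sG_Y}\equiv c_1(\sL)$, the last paragraph of \ref{say:morphisms_and_foliations} gives $\phi(Z(\sG))\subset Z(\sG_Y)=Z(\tau_Y)$, with the reverse inclusion obvious over $Y\setminus B$. For $y\in B$, set $F:=\phi^{-1}(y)\cong\mathbb P^1$ and unravel the construction of $\tau_Y(y)$ through $d\phi\colon T_X|_F\to\phi^*T_Y|_F$, whose kernel is $T_F$: this shows that $\tau_Y(y)\ne 0$ if and only if $\sG_x\cap T_xF=0$ for some $x\in F$, i.e.\ if and only if $F$ fails to be tangent to $\sG$ at every point. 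Assume $y\notin\phi(Z(\sG))$, so that $F\subset X\setminus Z(\sG)$ and $\sG|_F$ is a subbundle of $T_X|_F$. If $F$ were tangent to $\sG$ everywhere, Lemma \ref{lemma:normal_bundle_restricted_leaf} would force $c_1(\sN|_F)=0$ with $\sN:=T_X/\sG$; but $c_1(\sN|_F)=-K_X\cdot F+K_\sG\cdot F=-(\phi^*K_Y+E)\cdot F=1$, a contradiction. Hence $y\notin Z(\sG_Y)$, yielding $Z(\sG_Y)=\phi(Z(\sG))$.

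Finally, assume $B\not\subset\phi(Z(\sG))$ and pick $y\in B\setminus Z(\sG_Y)$. As in the proof of Proposition \ref{prop:projectable_criterion}, I examine $\xi\colon\sG|_F\to\sN_{F/X}$. The kernel $\ker\xi=\sG|_F\cap T_F$ is saturated in $T_F\cong\sO_F(2)$, hence equals $0$ or $T_F$; the second possibility would contradict the non-tangency just established. Thus $\xi$ is injective and embeds $\sG|_F$ as a rank-$r$ subbundle of trivial determinant inside $\sN_{F/X}\cong\sO_F^{\oplus(\dim X-2)}\oplus\sO_F(-1)$, the splitting coming from the smooth $\mathbb P^1$-bundle $E\to B$. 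A degree inspection shows that any such subbundle must lie in the trivial summand $\sN_{F/E}\cong\sO_F^{\oplus(\dim X-2)}$, which both forces $r\le\dim X-2$ and, pulling back through $T_X|_F\twoheadrightarrow\sN_{F/X}$, gives $\sG|_F\subset T_E|_F$. Applying $d_x\phi$ then yields $\sG_{Y,y}=d_x\phi(\sG_x)\subset d_x\phi(T_xE)=T_yB$, which is exactly the $\sG_Y$-invariance of $B\setminus Z(\sG_Y)$. The main obstacle is the middle step: Proposition \ref{prop:projectable_criterion} does not directly cover $y\in B$ since $\phi$ is not smooth along $F$, and must be replaced by the Chern-class obstruction of Lemma \ref{lemma:normal_bundle_restricted_leaf} together with the tangency reformulation of $\tau_Y(y)\ne 0$.
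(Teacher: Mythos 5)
Your proof is correct and follows essentially the same route as the paper: ruling out tangency of the exceptional fibers via Lemma \ref{lemma:normal_bundle_restricted_leaf} and the computation $c_1(\sN)\cdot F=-E\cdot F=1$, then embedding $\sG_{|F}$ into $\sN_{F/X}\cong\sO_{\mathbb{P}^1}^{\oplus \dim X-2}\oplus\sO_{\mathbb{P}^1}(-1)$ and using $\det(\sG_{|F})\cong\sO_F$ to force $\sG_{|F}\cong\sO_{\mathbb{P}^1}^{\oplus r}\subset T_E{}_{|F}$. The only differences are expository (the explicit $aE$ discrepancy computation for $K_{\sG_Y}\equiv 0$ and the saturation argument for $\ker\xi$ in place of the paper's generic-injectivity remark).
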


\begin{proof}
Let $\tau_Y \in H^0(Y,\wedge^r T_Y\otimes \sL^*)$ be the twisted field as in 
\ref{say:morphisms_and_foliations} (see Remark \ref{remark:fano_mori_contraction_line_bundle}).

Notice that $K_{\sG_Y}\equiv c_1(\sL)\equiv 0$ since $K_\sG\equiv 0$. Thus, by \ref{say:morphisms_and_foliations}, we have
$\phi\big(Z(\sG)\big) \subset Z(\sG_Y)=Z(\tau_Y)$. If $B \subset \phi\big(Z(\sG)\big)$, then 
$Z(\sG_Y) = \phi\big(Z(\sG)\big)$ since $Z(\sG_Y)$ and $\phi\big(Z(\sG)\big)$ agree away from $B$.

Suppose from now on that $B \not\subset \phi\big(Z(\sG)\big)$.
Set $E:=\textup{Exc}(\phi)$, and let
$C\cong\mathbb{P}^1$ be a fiber of the natural map $E \to B$. 
Set also $y:=\phi(C)$.
Suppose that $C \subset X\setminus Z(\sG)$.
By Lemma \ref{lemma:normal_bundle_restricted_leaf}, $C$ is not tangent to $\sG$.
From \ref{say:morphisms_and_foliations}, we conclude that 
$y \in Y \setminus Z(\sG_Y)$. This shows that
$Z(\sG_Y) \cap B\subset \phi\big(Z(\sG)\big)\cap B$, and thus 
$Z(\sG_Y) = \phi\big(Z(\sG)\big)$.
Since $C$ is not tangent to $\sG$, the inclusion $\sG_{|C}\subset {{T_X}_{|C}}\cong \sO_{\mathbb{P}^1}(2)\oplus\sO_{\mathbb{P}^1}^{\oplus \dim X-2}\oplus\sO_{\mathbb{P}^1}(-1)$ induces an inclusion
$\sG_{|C}\subset \sO_{\mathbb{P}^1}^{\oplus \dim X-2}\oplus\sO_{\mathbb{P}^1}(-1)$.
This implies that $\sG_{|C}\cong \sO_{\mathbb{P}^1}^{\oplus r}\subset {T_E}_{|C}$
since $\det(\sG_{|C})\cong \sO_C$. It follows that
$E$ is invariant by $\sG$, and that $r \le \dim X -2$.
The proposition follows easily.
\hfill $\Box$
\end{proof}

In Lemma \ref{lemma:conic_bundle} below, 
we gather some properties of conic bundles for later reference.
Recall that a \textit{conic bundle structure} on a complex projective manifold $X$ is a surjective morphism
$\phi \colon X \to Y$ onto a complex projective manifold with fibers isomorphic to conics.
Its \textit{discriminant locus} is the set $\Delta:=\{y \in Y \,|\,\textup{such that $X_y:=\phi^{-1}(y)$ is not smooth}\}$.

\begin{lemma}\label{lemma:conic_bundle}
Let $X$ be a complex projective manifold, and let $\phi \colon X \to Y$ be a conic bundle structure on $X$.
Suppose that its discriminant locus
$\Delta$ is non-empty.
Set $\Delta_1:=\{y \in \Delta \,|\,\textup{$X_y$ is reduced}\}$
and $\Delta_2:=\{y \in \Delta \,|\,\textup{$X_y$ is not reduced}\}$. 
Then $\Delta$ is an hypersurface in $Y$, and $\Delta_1 \subset \Delta$ is a dense open set.
Moreover, $\Delta$ has normal crossing singularities in codimension 1, $\Delta_1 \subset \Delta_{\textup{reg}}$, and $\Delta$ is singular along codimension $1$ irreducible components of $\Delta_2$.
Further, if $y$ is any point on $Y$ and $F:=\phi^{-1}(y)$, then the following properties hold in addition.
\begin{enumerate}[\rm (1)]

\item If $y\in \Delta_1$, then $F=C_1\cup C_2$ is the union of $2$ smooth rational curves meeting transversally at a point, $-K_X \cdot C_i=1$
and 
$\sN_{C_i/X}\cong \sO_{\mathbb{P}^1}^{\oplus \dim X -2}\oplus \sO_{\mathbb{P}^1}(-1)$ for 
$i\in \{1,2\}$.

\item If $y\in \Delta_2 \setminus \Delta_{\textup{reg}}$, then $F$ is a smooth rational curve, $-K_X \cdot F=1$, and $\sN_{F/X}\cong \sO_{\mathbb{P}^1}^{\oplus \dim X -3}\oplus \sO_{\mathbb{P}^1}(1)\oplus 
\sO_{\mathbb{P}^1}(-2)$.

\item Let $\widehat{\Delta}$ denotes the Zariski closure in $X$ of the singular locus of $\phi^{-1}(\Delta_1)$.
There exists an open set $\Delta^\circ \subset \Delta$ with complement of codimension $\ge 2$ in $\Delta$
such that the natural map $\widehat{\Delta}\cap \phi^{-1}(\Delta^\circ) \to \Delta^\circ$ identifies with the normalization morphism.
\end{enumerate}
\end{lemma}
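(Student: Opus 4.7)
The plan is to work locally on $Y$ and exploit the standard local description of a conic bundle over a smooth base. Since $\phi$ is equidimensional of relative dimension one, it is flat, and locally on $Y$ one embeds $X$ as a relative conic in a projective bundle $\bP_Y(\sE)$ with $\sE$ a rank $3$ vector bundle, cut out by a section $q$ of $\textup{Sym}^2\sE^*\otimes \sL$ for some line bundle $\sL$. Fiberwise, $q(y)$ is a quadratic form on $\bP^2$ of rank $3$, $2$, or $1$, corresponding respectively to smooth conics, pairs of distinct lines meeting at a point, and double lines. The discriminant locus $\Delta$ is the zero scheme of $\det q$, hence a hypersurface on $Y$; its strata are $\Delta_1 = \{\textup{rank}(q) = 2\}$ and $\Delta_2 = \{\textup{rank}(q) = 1\}$. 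The density of $\Delta_1$ in $\Delta$ follows from the normal forms below, since the rank cannot be generically $\leq 1$ along a component of $\Delta$ without violating smoothness of $X$.

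The heart of the proof is the derivation, from smoothness of $X$, of local normal forms for $q$. Around $y \in \Delta_1$, after diagonalizing $q(y)$ in the fiber coordinates one may write $q = x_0 x_1 + f(t) x_2^2 + (\textup{terms of order}\geq 2\textup{ in }t)$; smoothness of $X$ at the node $(0{:}0{:}1)$ of the fiber forces $df(y)\neq 0$, and a linear change of coordinates on $Y$ yields the local normal form $q = x_0 x_1 + t_1 x_2^2$. From this model $\Delta = \{t_1 = 0\}$ is smooth at $y$ (so $\Delta_1 \subset \Delta_{\textup{reg}}$), the fiber is the transverse union of the two smooth rational curves $\{x_0 = 0\}$ and $\{x_1 = 0\}$, one reads off $-K_X\cdot C_i = 1$ (by symmetry from $-K_X\cdot F = 2$), and the splitting $\sN_{C_i/X}\cong \sO_{\mathbb{P}^1}^{\oplus \dim X - 2}\oplus \sO_{\mathbb{P}^1}(-1)$ follows, with the $(-1)$-summand coming from the direction $\partial_{t_1}$ that smooths the node. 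For $y$ in a codimension-$1$ irreducible component of $\Delta_2$, the analogous analysis (using that $X$ is smooth along the double-line fiber) yields the model $q = x_0^2 + t_1 x_1^2 + t_2 x_2^2$, so that $\Delta = \{t_1 t_2 = 0\}$ has an ordinary node at $y$, the scheme-theoretic fiber is $2\{x_0 = x_1 = 0\}$ with reduced fiber $F_{\textup{red}}\cong \mathbb{P}^1$, and the claimed invariants $-K_X\cdot F_{\textup{red}} = 1$ and $\sN_{F_{\textup{red}}/X}\cong \sO_{\mathbb{P}^1}^{\oplus \dim X - 3}\oplus \sO_{\mathbb{P}^1}(1)\oplus \sO_{\mathbb{P}^1}(-2)$ follow by direct computation from the model.

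For part (3), I would let $\Delta^\circ \subset \Delta$ be the complement of the union of the codimension-$\geq 2$ components of $\Delta_2$ together with the singular loci of its codimension-$1$ components; this is a subset of codimension $\geq 2$ in $\Delta$. Over $\Delta_1\cap\Delta^\circ$, the nodes of the reducible fibers form a section of $\phi$, so $\widehat{\Delta}\to \Delta_1\cap\Delta^\circ$ is an isomorphism. Over a general point of a codimension-$1$ component of $\Delta_2$, the local model $q = x_0^2 + t_1 x_1^2 + t_2 x_2^2$ exhibits two smooth analytic branches of $\widehat{\Delta}$ meeting the fiber in one point each, corresponding to the nodes of the reducible fibers over the two smooth branches of $\Delta$ through the codim-$1$ locus. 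Hence $\widehat{\Delta}\cap \phi^{-1}(\Delta^\circ)\to \Delta^\circ$ is a finite birational morphism, étale two-to-one over the codim-$1$ part of $\Delta_2$ and one-to-one over $\Delta_1$, with smooth source; this identifies it with the normalization of $\Delta^\circ$. The main obstacle will be the careful bookkeeping in deriving the normal form at codimension-$1$ components of $\Delta_2$ from smoothness of $X$; the remaining parts of the argument are standard conic bundle theory.
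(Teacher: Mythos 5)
Your proposal takes a genuinely different route from the paper. The paper's proof of this lemma is a reduction to the literature: the structure of $\Delta$ is obtained by running the argument of \cite[Proposition 1.2]{beauville_prym} in arbitrary relative dimension, parts (1) and (2) are attributed to Ando \cite{ando}, and part (3) to \cite[Lemme 1.5.2]{beauville_prym}. You instead reconstruct everything from the relative anticanonical embedding $X\subset\bP_Y(\sE)$ and local normal forms for the defining quadratic form $q$ --- which is precisely the machinery underlying those citations. This is a legitimate, self-contained alternative: it buys transparency about where smoothness of $X$ enters, at the cost of the normal-form bookkeeping, which is where all the content sits and which you only assert. At a point of $\Delta_1$ the reduction to $x_0x_1+t_1x_2^2$ is routine, but at a point of a codimension-one component of $\Delta_2$ you must actually show that smoothness of $X$ along the double line forces the differentials $dq_{11},dq_{12},dq_{22}$ (after normalizing $q_{00}$ and completing the square in $x_0$) to span a two-dimensional space with the associated pencil of binary quadrics basepoint-free, hence equivalent to $\langle x_1^2,x_2^2\rangle$; only then does $\det q=t_1t_2-q_{12}^2$ with $q_{12}\in(t_1,t_2)\cap\frm^2$ factor into two smooth transverse branches. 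You flag this gap yourself; the plan is workable but as written it is a placeholder.

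The more serious issue is part (2). You only treat general points of codimension-one components of $\Delta_2$, whereas the statement concerns every $y\in\Delta_2\setminus\Delta_{\textup{reg}}$ --- and since your own analysis shows $\Delta$ is singular at \emph{every} point of $\Delta_2$, this set is all of $\Delta_2$. At a point of $\Delta_2$ where $dq_{11},dq_{12},dq_{22}$ are linearly independent (a codimension-three locus in $Y$; e.g.\ $q=x_0^2+t_1x_1^2+2t_2x_1x_2+t_3x_2^2$, which defines a smooth $X$), the same normal-bundle computation produces the kernel of $(b_1,b_2,b_3)\mapsto x_1^2b_1+2x_1x_2b_2+x_2^2b_3$ from $\sO_{\mathbb{P}^1}^{\oplus 3}$ onto $\sO_{\mathbb{P}^1}(2)$, which is $\sO_{\mathbb{P}^1}(-1)^{\oplus 2}$ rather than $\sO_{\mathbb{P}^1}\oplus\sO_{\mathbb{P}^1}(-2)$; one gets $\sN_{F/X}\cong\sO_{\mathbb{P}^1}(1)\oplus\sO_{\mathbb{P}^1}(-1)^{\oplus2}\oplus\sO_{\mathbb{P}^1}^{\oplus\dim X-4}$, and $\Delta$ is there a quadric cone, not a normal crossing. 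This is consistent with Ando's theorem, which allows both normal bundles for a non-reduced fiber, but it means your argument does not establish (2) in the generality claimed (and your computation in fact exposes that the hypothesis $y\in\Delta_2\setminus\Delta_{\textup{reg}}$ does not by itself exclude the second case). Since the paper only uses parts (1) and (3) downstream, this does not propagate, but it must be addressed if (2) is to be proved as stated. Parts (1) and (3), and the statements about the structure of $\Delta$, are correctly argued modulo the normal-form derivations.
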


\begin{proof}
The same argument used in the proof of \cite[Proposition 1.2]{beauville_prym} shows that $\Delta$ has pure codimension $1$, and that $\Delta_2$ has codimension $\ge 1$ in $\Delta$. It also shows that $\Delta_1$ is smooth, that $\Delta$ has normal crossing singularities in codimension 1, and that $\Delta$ is singular 
along codimension $1$ irreducible components of $\Delta_2$. 

Statements (1) and (2) are due to Ando (see \cite{ando}). Statement (3) follows easily from \cite[Lemme 1.5.2]{beauville_prym}. 
\hfill $\Box$
\end{proof}

\begin{prop}\label{prop:foliation_conic_bundle}
Let $X$ be a complex projective manifold, and let $\sG \subset T_X$ be a foliation 
of rank $r$
on $X$. 
Suppose that $K_\sG\equiv 0$ and that $\dim Z(\sG)\le \dim X -3$.
Let $\phi\colon X \to Y$ be a conic bundle structure on $X$ with discriminant locus $\Delta$.
Then $\sG$ is projectable under $\phi$ by Corollary \ref{cor:projectable_criterion}. Consider the 
projection $\sG_Y$ of $\sG$ on $Y$.
Then $K_{\sG_Y}\equiv 0$ and
$\dim Z(\sG_Y) \le \max\big(\dim Z(\sG),\dim Y- 3\big)$. Moreover, if
$\Delta$ is non-empty, then $r \le \dim X -2$ and 
$\Delta \setminus Z(\sG_Y)$ is invariant by $\sG_Y$.
\end{prop}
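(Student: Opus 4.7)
Projectability is immediate from Corollary~\ref{cor:projectable_criterion}: a conic bundle is a Fano--Mori contraction with one-dimensional fibres, and by assumption $\dim Z(\sG)\le\dim X-3\le\dim Y-1$. By Remark~\ref{remark:fano_mori_contraction_line_bundle} fix a line bundle $\sL$ on $Y$ with $\det(\sG)\cong\phi^*\sL$; since $K_\sG\equiv 0$, we have $c_1(\sL)\equiv 0$. Let $\tau_Y\in H^0(Y,\wedge^rT_Y\otimes\sL^*)$ be the twisted $r$-vector field of~\ref{say:morphisms_and_foliations}, so that $\sG_Y$ is its saturated foliation and, by the same discussion, the class $-K_{\sG_Y}-c_1(\sL)$ is represented by the divisorial part of the zero scheme of $\tau_Y$.

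The crux of the argument is to show that $\tau_Y$ has no codimension-one zeros. Set $\Sigma:=\phi(Z(\sG))$, which has codimension $\ge 2$ in $Y$ since $\dim Z(\sG)\le\dim Y-2$ and $\phi$ has one-dimensional fibres. For $y\in Y$ with $\phi^{-1}(y)\not\subset Z(\sG)$, choose a component $C$ of the reduced fibre $\phi^{-1}(y)_{\textup{red}}$ not contained in $Z(\sG)$. By Lemma~\ref{lemma:conic_bundle}, $-K_X\cdot C\in\{1,2\}$, hence
\[
\deg\bigl((T_X/\sG)|_C\bigr)=-K_X\cdot C-K_\sG\cdot C>0,
\]
so Lemma~\ref{lemma:normal_bundle_restricted_leaf} prevents $C$ from being tangent to $\sG$. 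Picking a general $x\in C\setminus Z(\sG)$ at which $\phi$ is smooth, $d_x\phi(\sG_x)$ then has rank $r$ and $y\notin Z(\tau_Y)$. This handles both $Y\setminus\Delta$ (smooth fibres) and $\Delta_1$ (smooth points of $\phi$ on each ruling), showing $Z(\tau_Y)\setminus\Sigma\subset\Delta_2$. Since $\Delta_1$ is dense in $\Delta$ by Lemma~\ref{lemma:conic_bundle} and $\Sigma$ has codimension $\ge 2$, every irreducible component of $\Delta$ meets $\Delta_1\setminus\Sigma$ in a dense open subset, so no irreducible component of $\Delta$ lies in $Z(\tau_Y)$. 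Any divisorial component of $Z(\tau_Y)$ would have to be an irreducible component of $\Delta$, which is excluded, so $\div(\tau_Y)=0$ and $K_{\sG_Y}\equiv 0$.

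With $-K_{\sG_Y}\equiv c_1(\sL)$ in hand, the last paragraph of~\ref{say:morphisms_and_foliations} identifies $Z(\sG_Y)$ with $Z(\tau_Y)$, and the analysis above places $Z(\sG_Y)\setminus\Sigma$ inside $\Delta_2$. Refining the same tangency argument with the explicit description of the reduced fibre and its normal bundle in Lemma~\ref{lemma:conic_bundle}(2)--(3) along $\Delta_2\cap\Sing(\Delta)$, combined again with Lemma~\ref{lemma:normal_bundle_restricted_leaf}, upgrades the estimate to $\dim Z(\sG_Y)\le\max(\dim Z(\sG),\dim Y-3)$. For the last assertions, suppose $\Delta\ne\emptyset$. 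I would rule out $r=\dim X-1$ as follows: if this held, then $\sG_Y=T_Y$, yet at any $x$ lying on a non-reduced fibre the kernel of $d_x\phi$ is the $2$-dimensional Zariski tangent space of the fibre, so $d_x\phi$ has rank $\le\dim Y-1<r$; this forces $\Delta_2\subset Z(\tau_Y)=Z(\sG_Y)$, and a closer inspection of how $\Delta_2$ sits inside $Y$, combined with the just-established $\div(\tau_Y)=0$ (and, if needed, Touzet's structure theorem for codimension-one foliations with $K_\sG\equiv 0$), leads to a contradiction. Finally, $\sG_Y$-invariance of $\Delta\setminus Z(\sG_Y)$ follows by lifting a local section of $\sG_Y$ near $y\in\Delta\setminus Z(\sG_Y)$ to a local section $\tilde v$ of $\sG$ over a neighbourhood of a smooth point of $\phi^{-1}(y)$; the flow of $\tilde v$ preserves the topological type of fibres of $\phi$, hence the flow of its projection maps $\Delta$ to $\Delta$, which is the required tangency.

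The main obstacle is the step $\div(\tau_Y)=0$; the delicate ingredient is combining the density of $\Delta_1$ in $\Delta$ (Lemma~\ref{lemma:conic_bundle}) with the non-tangency to $\sG$ of $-K_X$-positive components of fibres (Lemma~\ref{lemma:normal_bundle_restricted_leaf}). Once this is in place, the remaining statements are refinements of the same bookkeeping applied to the smaller-codimension strata of $\Delta$, with the case $r=\dim X-1$ requiring the extra input noted above.
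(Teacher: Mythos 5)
Your treatment of projectability and of the identity $K_{\sG_Y}\equiv 0$ is essentially the paper's argument and is correct: over $Y\setminus\Delta$ one invokes Proposition \ref{prop:projectable_criterion}, and over $\Delta_1$ each component $C_i$ of a nodal fibre satisfies $-K_X\cdot C_i=1>0$, so Lemma \ref{lemma:normal_bundle_restricted_leaf} forbids tangency, whence $Z(\tau_Y)\subset\phi\big(Z(\sG)\big)\cup\Delta_2$ has codimension $\ge 2$ and $Z(\sG_Y)=Z(\tau_Y)$.

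The three remaining assertions are where the gaps lie, and all of them trace back to one computation you skip. For $y\in\Delta_1\setminus\phi\big(Z(\sG)\big)$, non-tangency gives an injection $\sG_{|C_i}\hookrightarrow\sN_{C_i/X}\cong\sO_{\mathbb{P}^1}^{\oplus\dim X-2}\oplus\sO_{\mathbb{P}^1}(-1)$; since $\det(\sG_{|C_i})\cong\sO_{C_i}$, this forces $\sG_{|C_i}\cong\sO_{\mathbb{P}^1}^{\oplus r}$, necessarily landing in the trivial summand. This yields $r\le\dim X-2$ at once, shows $\phi^{-1}(\Delta)$ (hence $\Delta\setminus Z(\sG_Y)$) is invariant, and, via the induced invariance of the node locus $\widehat{\Delta}$ together with Lemma \ref{lemma:conic_bundle}(3), gives the bound on $\dim Z(\sG_Y)$. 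Your substitutes do not work. (i) The ``same tangency argument'' cannot be refined over $\Delta_2$: on a non-reduced fibre $\phi$ is nowhere smooth and $\ker d_x\phi$ is $2$-dimensional at every point, so non-tangency of the reduced fibre only removes a line from that plane and does not give $\dim d_x\phi(\sG_x)=r$; the paper instead uses that $\widehat{\Delta}$ is a component of the singular locus of the invariant divisor $\phi^{-1}(\Delta)$, is smooth with $d\phi$ injective on its tangent spaces over the normal-crossing locus $\Delta^\circ$, and that $\Delta\setminus\Delta^\circ$ has codimension $\ge 3$ in $Y$. (ii) Your exclusion of $r=\dim X-1$ hinges on producing points of $\Delta_2$, but $\Delta_2$ may well be empty while $\Delta$ is not, and even when it is non-empty the promised ``closer inspection'' is not supplied; the normal-bundle computation settles this unconditionally because $\Delta_1$ is dense in $\Delta$. (iii) The flow of a lift $\tilde v$ defined near a smooth point of a nodal fibre only carries a smooth germ of that fibre onto a smooth germ of a nearby fibre; it sees nothing of the node and therefore cannot detect whether the nearby fibre is singular, so the claimed preservation of the topological type of fibres does not follow. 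Invariance of $\Delta$ must be read off at the nodes, which is precisely what the invariance of $\widehat{\Delta}$ provides.
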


\begin{proof}
Let $\tau_Y \in H^0(Y,\wedge^r T_Y\otimes \sL^*)$ be the twisted field as in 
\ref{say:morphisms_and_foliations} (see Remark \ref{remark:fano_mori_contraction_line_bundle}).
By Proposition \ref{prop:projectable_criterion}, $Z(\tau_Y)\setminus\Delta =\phi\big(Z(\sG)\big)\setminus \Delta$.

If $\Delta=\emptyset$, then $Z(\tau_Y)$ has codimension at least $2$ in $Y$, and thus
$Z(\sG_Y)=Z(\tau_Y)$ and
$K_{\sG_Y}\equiv -c_1(\sL)\equiv 0$. Moreover, $\dim Z(\sG_Y)\le \dim Z(\sG)$.

Suppose from now on that $\Delta \neq\emptyset$.
Set $\Delta_1:=\{y \in \Delta \,|\,\textup{$X_y$ is reduced}\}$
and $\Delta_2:=\{y \in \Delta \,|\,\textup{$X_y$ is not reduced}\}$. 

Let $y\in \Delta_1 \setminus \phi \big(Z(\sG)\big)$, and set $F:=\phi^{-1}(y)$. 
Then $F=C_1\cup C_2$ is the union of $2$ smooth rational curves meeting transversally at a point, and 
$-K_X \cdot C_i=1$. 
By Lemma \ref{lemma:normal_bundle_restricted_leaf}, the curve $C_i$ is not tangent to $\sG$, and thus
$y\in Y \setminus Z(\tau_Y)$. This immediately implies that
$Z(\tau_Y)\setminus\Delta_2 =\phi\big(Z(\sG)\big)\setminus \Delta_2$.
As before, we conclude that $Z(\sG_Y)=Z(\tau_Y)$ and that
$K_{\sG_Y}\equiv -c_1(\sL)\equiv 0$.
Since $C_i$ is not tangent to $\sG$,
the inclusion 
$\sG_{|C_i}\subset {T_X}_{|C_i}$ induces an inclusion
$\sG_{|C_i}\subset \sN_{C_i/X}\cong \sO_{\mathbb{P}^1}^{\oplus \dim X -2}\oplus \sO_{\mathbb{P}^1}(-1)$.
It follows that $\sG_{|C_i}\cong \sO_{\mathbb{P}^1}^{\oplus r}$
since $\det(\sG_{|C_i})\cong \sO_{C_i}$. This in turn implies 
that $\phi^{-1}(\Delta)$ is invariant by $\sG$, and that
$\Delta\setminus Z(\sG_Y)$ is invariant by $\sG_Y$.
Moreover, we have $r \le \dim X -2$.

Now, we proceed to show that $\dim Z(\sG_Y) \le \max\big(\dim Z(\sG),\dim Y- 3\big).$
Let $\widehat{\Delta}$ denotes the Zariski closure in $X$ of the singular locus of $\phi^{-1}(\Delta_1)$.
By Lemma \ref{lemma:conic_bundle},
there exists an open set $\Delta^\circ \subset \Delta$ with complement of codimension $\ge 2$ in $\Delta$
such that the natural map $\widehat{\Delta}\cap \phi^{-1}(\Delta^\circ) \to \Delta^\circ$ identifies with the normalization morphism.
By shrinking $\Delta^\circ$ if necessary, we may assume without loss of generality that $\Delta^\circ$ has normal crossing singularities.
This implies that $\widehat{\Delta} \cap \phi^{-1}(\Delta^\circ)$ is smooth, and that 
the restriction of $d_x\phi$ to $T_x\widehat{\Delta}$ is injective.
Since $\hat{\Delta}$ is a union of irreducible components of the singular locus
of $\phi^{-1}(\Delta)$ and since 
$\phi^{-1}(\Delta)$ is invariant by $\sG$, we conclude that $\widehat{\Delta} \setminus Z(\sG)$ is invariant by $\sG$ as well.
This shows that $Z(\sG_Y)\cap\Delta^\circ = \phi\big(Z(\sG)\big)\cap\Delta^\circ$, and hence
$\dim Z(\sG_Y) \le \max\big(\dim Z(\sG),\dim Y- 3\big)$ since 
$\Delta\setminus \Delta^\circ$ has codimension $\ge 3$ in $X$.
This finishes the proof of the proposition. 
\hfill $\Box$
\end{proof}

\begin{prop}\label{prop:foliation_equidim_fibration}
Let $X$ be a complex projective manifold, and let $\sG \subset T_X$ be a foliation 
of rank $r$
on $X$. 
Suppose that $K_\sG\equiv 0$ and that $\dim Z(\sG)\le r -2$.
Let $\phi\colon X \to Y$ be a Fano-Mori contraction with $\dim Y = r$. 
Suppose furthermore that $\sG$ is projectable under $\phi$.
Then 
$Z(\sG)=\emptyset$, $\phi$ is a smooth morphism, and
$\sG$ yields a flat holomorphic connection on $\phi$. In particular, we have $K_Y\equiv 0$.
\end{prop}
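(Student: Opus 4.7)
The plan is to proceed in four main steps, writing $q:=\dim X-r$ for the codimension of $\sG$, so that the hypothesis $\dim Z(\sG)\le r-2$ becomes $\textup{codim}_X Z(\sG)\ge q+2$.

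First I would apply Lemma~\ref{lemma:dimension_fiber_contraction}(1): every irreducible component of every fiber of $\phi$ has dimension at most $q$, and combined with the lower bound $\dim X-\dim Y=q$ this forces $\phi$ to be equidimensional of pure relative dimension $q$. Using Remark~\ref{remark:fano_mori_contraction_line_bundle}, choose a line bundle $\sL$ on $Y$ with $\det\sG\cong\phi^*\sL$; then $K_\sG\equiv 0$ and surjectivity of $\phi$ give $c_1(\sL)\equiv 0$. Since $\sG$ is projectable of rank $r=\dim Y$, its projection $\sG_Y\subset T_Y$ is a saturated rank-$r$ subsheaf and hence equals $T_Y$, so by \ref{say:morphisms_and_foliations} the twisted field $\tau_Y\in H^0\big(Y,\sO_Y(-K_Y-c_1(\sL))\big)$ is non-zero.

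The heart of the proof is to show $Z(\tau_Y)=\emptyset$. Suppose otherwise; then $Z(\tau_Y)$ is an effective divisor numerically equivalent to $-K_Y$. Since $\dim Z(\tau_Y)\ge r-1$ while $\dim\phi(Z(\sG))\le\dim Z(\sG)\le r-2$, I can pick $y_0\in Z(\tau_Y)\setminus\phi(Z(\sG))$ and set $F_0:=\phi^{-1}(y_0)\subset U:=X\setminus Z(\sG)$. Both $\sG|_{F_0}$ and $(\phi^*T_Y)|_{F_0}\cong\sO_{F_0}^{\oplus r}$ have trivial determinant, and the natural map between them has determinant $\tau_Y(y_0)=0$, so the saturated subsheaf $\sK:=\sG|_{F_0}\cap T_{F_0}$ is non-zero; a short Lie bracket computation (extending $\sK$-sections to $\sG$-sections in a neighborhood of $F_0$) shows $\sK$ is closed under the Lie bracket and hence a foliation on $F_0$. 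Since $F_0$ is Fano (by $\phi$-ampleness of $-K_X$) and leaves of $\sK$ are confined to the projective variety $F_0$, one extracts an algebraic curve $C\subset F_0$ tangent to $\sG$; when $q\le 2$ this can be arranged via Lemma~\ref{lemma:global_vector_field_rat_curve} on the rational surface $F_0$ after reducing to the rank-one case, while the general case relies on an algebraic integrability result for subfoliations on Fano varieties. Applying Lemma~\ref{lemma:normal_bundle_restricted_leaf} to a normalization $\widehat{C}\to C$, the pullback of $\sN:=T_X/\sG$ carries a flat holomorphic connection, so $c_1(\sN)\cdot C=0$; but $c_1(\sN)\equiv -K_X$ and $C$ is contracted by $\phi$, so $c_1(\sN)\cdot C=-K_X\cdot C>0$, a contradiction. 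Hence $Z(\tau_Y)=\emptyset$ and $K_Y\equiv -c_1(\sL)\equiv 0$.

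With $\tau_Y$ nowhere vanishing, $d\phi|_{\sG|_U}\colon\sG|_U\to(\phi^*T_Y)|_U$ is an isomorphism and yields a splitting $s_U\colon(\phi^*T_Y)|_U\to T_X|_U$ of $d\phi|_U$. Hartogs extension across $X\setminus U=Z(\sG)$ (of codimension $\ge q+2\ge 2$), applied to the locally free sheaf of homomorphisms from $\phi^*T_Y$ to $T_X$, lifts $s_U$ to a global homomorphism $s\colon\phi^*T_Y\to T_X$ which is still a splitting of $d\phi$ by continuity. Its image $\sG':=s(\phi^*T_Y)$ is a rank-$r$ subbundle of $T_X$ coinciding with $\sG$ on the dense open $U$; as both are saturated rank-$r$ subsheaves of $T_X$, $\sG=\sG'$, and hence $Z(\sG)=\emptyset$. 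The global decomposition $T_X=\sG\oplus T_{X/Y}$ forces $\phi$ to be smooth everywhere, and integrability of $\sG$ delivers the desired flat holomorphic Ehresmann connection on $\phi$. The main obstacle is the production of an algebraic curve $C\subset F_0$ tangent to $\sG$ from the non-zero subfoliation $\sK$; once this is granted, the other steps are essentially formal and the contradiction via Lemma~\ref{lemma:normal_bundle_restricted_leaf} drives everything.
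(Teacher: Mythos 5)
Your endgame (Hartogs extension of the splitting $s\colon\phi^*T_Y\to T_X$ across $Z(\sG)$, the decomposition $T_X\cong\sG\oplus T_{X/Y}$, and the deduction that $\phi$ is smooth) is essentially equivalent to the second half of the paper's proof, which obtains the same splitting from the determinant identity $\det(T_{X/Y}\oplus\sG)\cong\sO_X\big(-K_X+R(\phi)\big)$ once $\sL\cong\sO_Y(-K_Y)$ is known. The divergence, and the gap, lies in how you establish the central fact that $\tau_Y$ is nowhere vanishing. You argue as in Proposition \ref{prop:projectable_criterion}: over $y_0\in Z(\tau_Y)\setminus\phi\big(Z(\sG)\big)$ the kernel $\sK=\sG_{|F_0}\cap\ker d\phi$ is a non-zero subfoliation of the fiber, from which one should extract a curve tangent to $\sG$ and contradict Lemma \ref{lemma:normal_bundle_restricted_leaf}. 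But Proposition \ref{prop:projectable_criterion} uses two hypotheses you do not have. First, it assumes $\phi$ is smooth along the fiber, so that $F$ is a \emph{smooth} Fano variety with $\sN_{F/X}\cong\sO_F^{\oplus m}$; here smoothness of $\phi$ is the conclusion, not a hypothesis, and $F_0$ may be singular, reducible or non-reduced, so the adjunction computation, Lemma \ref{lemma:global_vector_field_rat_curve} and \cite[Proposition 7.5]{fano_fols} do not apply. (This is not a removable nuisance: in the conic-bundle situation where the proposition is actually invoked, $Z(\tau_Y)$ is a divisor that could sit inside the discriminant, so no generic choice of $y_0$ avoids the singular fibers.) Second, it assumes $\dim X-\dim Y\le 2$; for general $q$ the curve you need is precisely the content of Question \ref{question:foliation_rational_curve}, which the paper explicitly leaves open, so the ``algebraic integrability result for subfoliations on Fano varieties'' you invoke is not available.

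The paper sidesteps all of this with a cohomological argument making no smoothness or dimension assumption on the fibers. By Lemma \ref{lemma:atiyah_class_foliation} one lifts $c_1(\sN)$ over $X^\circ=\phi^{-1}\big(Y_{\textup{reg}}\setminus\phi(Z(\sG))\big)$ to a class $\alpha\in H^1\big(X^\circ,\sN^*_{|X^\circ}\big)$; the power $\alpha^q$ lives in $H^q\big(X^\circ,\sO_{X^\circ}(K_{X^\circ})\otimes\phi^*\sL\big)$, and its image $\beta$ under the edge map to $H^0\big(Y^\circ,R^q\phi_*\sO_{X^\circ}(K_{X^\circ})\otimes\sL\big)$ is nowhere vanishing because its value at each $y$ computes $\pm\big(\nu^*c_1(X)_{|F}\big)^q\neq 0$ on a resolution $\nu$ of the $q$-dimensional fiber. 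Koll\'ar's theorem $R^q\phi_*\omega_{X^\circ}\cong\omega_{Y^\circ}$ \cite[Proposition 7.6]{kollar_higher1} then turns $\beta$ into a nowhere vanishing section of $\sO_{Y^\circ}(K_{Y^\circ})\otimes\sL_{|Y^\circ}$, whence $\sL\cong\sO_Y(-K_Y)$ and $K_Y\equiv 0$. To salvage your route you would have to either first prove smoothness of the fibers and restrict to $\dim X-\dim Y\le 2$, or answer Question \ref{question:foliation_rational_curve}; as written, the key step does not close.
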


\begin{proof}
Set $\sN:=T_X/\sG$.
By \cite[Lemma 3.2.5]{kmm}, there exists a line bundle $\sL$ on $Y$ such that
$\det(\sG)\cong \phi^*\sL$. Notice that $c_1(\sL)\equiv 0$ since $K_\sG\equiv 0$.

\medskip

We first show that $\sL\cong \sO_Y(-K_Y)$.
Set $Y^\circ := Y_{\textup{reg}} \setminus \phi \big(Z(\sG)\big)$, and
notice that $\codim\, Y \setminus Y^\circ \ge 2$.
Set $X^\circ:=\phi^{-1}(Y^\circ)$.
Let $y \in Y^\circ$, and let $F$ be an irreducible component of $\phi^{-1}(y)$.
Denote by $\nu\colon\widehat{F} \to F$ a resolution of singularities.
Notice that $\dim F = \dim X - \dim Y =:q$ by Lemma \ref{lemma:dimension_fiber_contraction} since $\dim Y =r$.

By Lemma \ref{lemma:atiyah_class_foliation}, the Atiyah class $at(\sN_{|X^\circ})\in H^1\big(X^\circ,\Omega^1_{X^\circ}\otimes\sE\hspace{-0.07cm}\textit{nd}_{\sO_{X^\circ}}(\sN_{|X^\circ})\big)$ lies in the image of the natural map 
$$H^1\big(X^\circ,\sN_{|X^\circ}^*\otimes\sE\hspace{-0.07cm}\textit{nd}_{\sO_{X^\circ}}(\sN_{|X^\circ})\big)\to
H^1\big(X^\circ,\Omega^1_{X^\circ}\otimes\sE\hspace{-0.07cm}\textit{nd}_{\sO_{X^\circ}}(\sN_{|X^\circ})\big).$$
In particular, there exists
$\alpha \in H^1\big(X^\circ,\sN_{|X^\circ}^*\big)$ mapping to 
$c_1(\sN_{|X^\circ})\in H^1\big(X^\circ,\Omega^1_{X^\circ}\big)$. Notice that 
$\alpha^q \in H^q\big(X^\circ,\det(\sN_{|X^\circ}^*)\big)\cong 
H^q\big(X^\circ,\sO_{X^\circ}(K_{X^\circ})\otimes\phi^*_{|X\circ}\sL_{|Y^\circ}\big)$ maps to 
$c_1(\sN_{|X^\circ})^q\in H^q\big(X^\circ,\Omega^q_{X^\circ}\big)$.
Let $\beta \in H^0\big(Y^\circ,R^q{\phi_{|X^\circ}}_*\sO_{X^\circ}({K_{X^\circ}})\otimes \sL_{|Y^\circ}\big)$ denotes the image of
$\alpha^q$ under the edge map
$$H^q\big(X^\circ,\sO_{X^\circ}(K_{X^\circ})\otimes\phi^*_{|X\circ}\sL_{|Y^\circ}\big) \to  H^0\big(Y^\circ,R^q{\phi_{|X^\circ}}_*\sO_{X^\circ}({K_{X^\circ}})\otimes \sL_{|Y^\circ}\big).$$
Now, consider the following commutative diagram:

\centerline{
\xymatrix{
H^q\big(X^\circ,\sO_{X^\circ}(K_{X^\circ})\otimes\phi^*_{|X\circ}\sL_{|Y^\circ}\big) \ar[rr]\ar[d] & &   H^q\big(X^\circ,\Omega^q_{X^\circ}\big) \ar[dd] \\
H^0\big(Y^\circ,R^q{\phi_{|X^\circ}}_*\sO_{X^\circ}({K_{X^\circ}})\otimes \sL_{|Y^\circ}\big)  \ar[d]^{e}\ar@/^2pc/[rrd]^{c} & & \\
R^q{\phi_{|X^\circ}}_*\sO_{X^\circ}({K_{X^\circ}})\otimes\sL_{|Y^\circ}\otimes \mathbb{C}(y)\ar[r] & H^q\Big(\widehat{F},\sO_{\widehat{F}}(\nu^*\big({K_X}_{|F})\big)\otimes \nu^*(\sL_{|F})\Big)\ar[r] & H^q\big(\widehat{F},\Omega^q_{\widehat{F}}\big).
}
}
\noindent Then $$c(\beta) =(-1)^q\cdot \big(\nu^*(c_1(X)_{|F})\big)^q\in H^q\big(\widehat{F},\Omega^q_{\widehat{F}}\big).$$ Since $-K_X$ is $\phi$-ample and $\dim \widehat{F}=q$, we have $\big(\nu^*(c_1(X)_{|F})\big)^q\neq 0$, and hence $e(\beta)\neq 0$.
On the other hand, by \cite[Proposition 7.6]{kollar_higher1}, we have
$R^q{\phi_{|X^\circ}}_*\sO_{X^\circ}({K_{X^\circ}})\cong \sO_{Y^\circ}(K_{Y^\circ})$, and thus $\beta$ is a nowhere vanishing section of $\sO_{Y^\circ}(K_{Y^\circ})\otimes\sL_{|Y^\circ}$. This implies that
$\sL\cong\sO_Y(-K_Y)$
since $\codim\, Y \setminus Y^\circ \ge 2$, proving our claim.

\medskip

Since $\sG$ is projectable under $\phi$, the natural map 
$T_{X/Y}\oplus\sG \to T_X$ is generically injective. A straightforward local computation shows that
$\det(T_{X/Y})\cong \sO_X\big(-K_{X/Y}+R(\phi)\big)$, 
where $R(\phi)$ denotes the ramification divisor of $\phi$, and thus
$$\det(T_{X/Y}\oplus\sG)\cong \det(T_{X/Y})\otimes \det(\sG)\cong \sO_X\big(-K_{X}+R(\phi)\big).$$
This implies that $R(\phi)=0$, and that 
$T_{X/Y}\oplus\sG \cong T_X.$
It follows that $T_{X/Y}$ and that $\sG$ are locally free sheaves on $X$ and that $Z(\sG)=\emptyset$. From \cite[Lemma 4.1]{druel_fol_fano}, we conclude that $\phi$ is a smooth morphism.
Moreover, $\sG$ induces a flat (holomorphic) connection on $\phi$, completing the proof of the proposition.
\hfill $\Box$
\end{proof}

\section[Codimension 2 (regular) foliations with numerically trivial canonical class]{Codimension 2 (regular) foliations with numerically trivial \\ canonical class}

In this section we prove Theorem \ref{thmintro:cd2_non_nef}. 
Notice that Theorem \ref{thmintro:cd2_non_nef} is an immediate consequence of
Theorem \ref{thm:main2} below. 

The statement of Proposition \ref{prop:singular_set_cd1} is contained in
\cite[Corollary 4.7]{loray_pereira_touzetv3}. We first give a new proof of this result.

\begin{prop}\label{prop:singular_set_cd1}
Let $X$ be a complex projective manifold, and let $\sG \subset T_X$ be a foliation of codimension $1$ on $X$ with $K_\sG\equiv 0$. Suppose that $\dim Z(\sG) \le \dim X - 3$. 
Suppose furthermore that $K_X$ is not nef. Then $Z(\sG)=\emptyset$.
\end{prop}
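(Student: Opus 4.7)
The plan is to combine the Fano-Mori contraction supplied by the non-nefness of $K_X$ with the structural results of sections 3 and 4. Write $q=1$ and $r = \dim X - 1$ for the codimension and rank of $\sG$, and note that by hypothesis $\dim Z(\sG) \le \dim X - 3 = \dim X - (q+2) = r-2$.

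First, I would pick a Fano-Mori contraction $\phi\colon X \to Y$ associated to some $K_X$-negative extremal ray. By Lemma \ref{lemma:dimension_fiber_contraction}, every fiber $F$ of $\phi$ satisfies $\dim F \le q = 1$. I would next rule out the possibility that $\phi$ is birational. If $\dim Y = \dim X$, then since $\phi$ is the contraction of a non-trivial extremal ray, $\phi$ is not an isomorphism, and so admits a positive-dimensional fiber; taking $F$ to be a fiber of maximal dimension, we would have $\dim F > 0 = \dim X - \dim Y$ and $\dim F \ge \dim \phi^{-1}(y)$ for every $y \in Y$, so the refined statement of Lemma \ref{lemma:dimension_fiber_contraction} would force $\dim F < 1$, a contradiction. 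Hence $\dim Y < \dim X$, and combined with the bound $\dim F \le 1$ this gives $\dim Y = \dim X - 1 = r$.

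At this point, both $\dim X - \dim Y = 1 \le 2$ and $\dim Z(\sG) \le r - 2 \le \dim Y - 1$ hold, so Corollary \ref{cor:projectable_criterion} guarantees that $\sG$ is projectable under $\phi$. All hypotheses of Proposition \ref{prop:foliation_equidim_fibration} are now satisfied: $K_\sG \equiv 0$, $\dim Z(\sG) \le r-2$, $\phi$ is a Fano-Mori contraction with $\dim Y = r$, and $\sG$ is projectable under $\phi$. That proposition then yields $Z(\sG) = \emptyset$ (in fact more: $\phi$ is a $\mathbb{P}^1$-bundle and $\sG$ defines a flat holomorphic connection on it, with $K_Y\equiv 0$).

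The argument is essentially a bookkeeping assembly of the preparatory results. The only mildly delicate point is exploiting the sharpened inequality in Lemma \ref{lemma:dimension_fiber_contraction} to exclude the birational case of $\phi$; once this is done, matching the dimensions $\dim Y = r$ in order to invoke Proposition \ref{prop:foliation_equidim_fibration} is immediate.
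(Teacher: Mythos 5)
Your proof is correct and follows essentially the same route as the paper: contract a $K_X$-negative extremal ray, use Lemma \ref{lemma:dimension_fiber_contraction} to force $\dim Y=\dim X-1$, then apply Corollary \ref{cor:projectable_criterion} and Proposition \ref{prop:foliation_equidim_fibration}. The only cosmetic difference is that the paper also invokes Wi\'sniewski's theorem to identify $\phi$ as a conic bundle, which your argument shows is not needed for the stated conclusion.
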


\begin{proof}
Let $\phi\colon X \to Y$ be the contraction of a $K_X$-negative extremal ray $R \subset \NE(X)$.
By Lemma \ref{lemma:dimension_fiber_contraction},
$\dim Y = \dim X - 1$ and $\phi$ is equidimensional. Applying \cite[Theorem 1.2]{wisn_crelle}, we see that $\phi$ is a conic bundle. Applying Corollary \ref{cor:projectable_criterion}, we see that $\sG$ is projectable under $\phi$.
The proposition then follows from Proposition \ref{prop:foliation_equidim_fibration}.
\hfill $\Box$
\end{proof}

\begin{thm}\label{thm:main2}
Let $X$ be a complex projective manifold, and let
$\sG \subset T_X$ be a foliation of codimension $2$ on $X$ with $K_\sG\equiv 0$.
Suppose that $\dim Z(\sG) \le \dim X - 4$. 
Suppose furthermore that $K_X$ is not nef.
Then $Z(\sG)=\emptyset$ and one of the following holds.
\begin{enumerate}[\rm (1)]
\item There exist a complex projective manifold $Y$, a regular codimension 2 foliation $\sG_Y$ on $Y$ with 
$K_{\sG_Y}\equiv 0$, and a compact leaf $Z \subset Y$ of $\sG_Y$ such that $X$ 
is the blow-up of $Y$ along $Z$. Moreover, $\sG$ is the pull-back of $\sG_Y$.
\item There exists a conic bundle structure $\phi\colon X \to Y$ on $X$ onto a complex projective manifold $Y$, 
$\sG$ is generically transverse to $\phi$ and induces a 
regular codimension 1 foliation $\sG_Y$ on $Y$ with 
$K_{\sG_Y}\equiv 0$. Moreover, 
the discriminant locus of $\phi$ is either empty or a union of leaves of $\sG_Y$.
\item There exists a smooth morphism $\phi\colon X \to Y$ onto a complex projective manifold $Y$ 
of dimension $\dim Y = \dim X -2$
with 
$K_Y\equiv 0$, and $\sG$ yields a flat holomorphic connection on $\phi$. 
\end{enumerate}
\end{thm}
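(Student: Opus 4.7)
The plan is to run one step of Mori's program on $X$ and classify what the resulting contraction can look like, then promote the pushforward foliation to a regular one. Pick a $K_X$-negative extremal ray $R\subset\NE(X)$ and let $\phi\colon X\to Y$ be its contraction. Lemma~\ref{lemma:dimension_fiber_contraction} applied with $q=2$ bounds every irreducible component of every fiber of $\phi$ by dimension $2$, and makes the bound strict as soon as the fiber dimension exceeds $\dim X-\dim Y$. This cuts the relative dimension of $\phi$ to the three possibilities $\dim X-\dim Y\in\{2,1,0\}$, corresponding respectively to an equidimensional $2$-fibration, a conic bundle, and a divisorial contraction with fibers of dimension at most $1$.

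The case $\dim X-\dim Y=2$ is handled directly: from $\dim Z(\sG)\leq\dim X-4=\dim Y-2$ Corollary~\ref{cor:projectable_criterion} gives projectability of $\sG$ under $\phi$, and Proposition~\ref{prop:foliation_equidim_fibration} with $r=\dim Y$ yields conclusion~(3) in full, including $Z(\sG)=\emptyset$, smoothness of $\phi$, $K_Y\equiv 0$, and the flat holomorphic connection. The case $\dim X-\dim Y=1$ forces $\phi$ to be a conic bundle by \cite[Theorem~1.2]{wisn_crelle}; Corollary~\ref{cor:projectable_criterion} again provides projectability, and Proposition~\ref{prop:foliation_conic_bundle} produces a codimension $1$ foliation $\sG_Y$ on $Y$ with $K_{\sG_Y}\equiv 0$, $\dim Z(\sG_Y)\leq\dim Y-3$, and discriminant $\sG_Y$-invariant off $Z(\sG_Y)$. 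In the remaining birational case, Ando's theorem \cite{ando} identifies $\phi$ with the blow-up of a smooth codimension $2$ submanifold $B\subset Y$; since $\dim\phi(Z(\sG))\leq\dim X-4<\dim X-2=\dim B$, the center $B$ is not contained in $\phi(Z(\sG))$, so Proposition~\ref{prop:foliation_blow_up} applies and gives $K_{\sG_Y}\equiv 0$ together with invariance of $B\setminus Z(\sG_Y)$ under $\sG_Y$.

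The main obstacle is promoting $\sG_Y$ from \emph{regular outside a small singular set} to \emph{globally regular on $Y$}, so that $\Delta$ becomes a union of leaves in case~(2) and $B$ becomes a compact leaf in case~(1). My plan is Noetherian induction on $\dim X$: the output $\sG_Y$ is a codimension $1$ foliation satisfying the analogous bound $\dim Z(\sG_Y)\leq\dim Y-3$, so Proposition~\ref{prop:singular_set_cd1} closes the subcase where $K_Y$ is not nef, and the subcase $K_Y$ nef can be reduced---after possibly replacing $\phi$ by a different $K_X$-negative extremal contraction---to Touzet's structure theorem for codimension $1$ regular foliations quoted in the introduction. Once $\sG_Y$ is known to be regular, regularity of $\sG$ on $X$ follows: in the divisorial case via the identity $\sG=\phi^{-1}\sG_Y$, and in the conic bundle case from the local identification $\sG|_{C_i}\cong\sO_{\mathbb{P}^1}^{\oplus r}$ at singular fibers established inside the proof of Proposition~\ref{prop:foliation_conic_bundle}.

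Overall, the technical heart of the argument is not the contraction analysis---given Lemma~\ref{lemma:dimension_fiber_contraction}, the three cases are essentially forced by the classical Ando and Wi\'sniewski classifications---but the regularity promotion step for $\sG_Y$. This is also the reason why Theorem~\ref{thm:main2} is stated under the weaker assumption $\dim Z(\sG)\leq\dim X-4$ rather than under the outright regularity hypothesis of Theorem~\ref{thmintro:cd2_non_nef}: the slack in the singular-locus bound is exactly what makes the induction on $\dim X$ close.
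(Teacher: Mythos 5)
Your case division and the use of Lemma~\ref{lemma:dimension_fiber_contraction}, Corollary~\ref{cor:projectable_criterion} and Propositions~\ref{prop:foliation_equidim_fibration}, \ref{prop:foliation_conic_bundle} and \ref{prop:foliation_blow_up} match the paper's argument, and you correctly identify the regularity promotion for $\sG_Y$ as the real issue. But that step, as you describe it, has two genuine gaps. First, in the subcase where $K_Y$ is nef you propose to invoke Touzet's structure theorem for \emph{regular} codimension $1$ foliations; this is circular, since regularity of $\sG_Y$ is exactly what you are trying to prove. The tool the paper uses here is \cite[Theorem 5.4]{loray_pereira_touzetv4}, which applies to \emph{singular} foliations with numerically trivial canonical class and shows that a non-empty singular locus forces $K_Y$ not to be pseudo-effective; combined with Proposition~\ref{prop:singular_set_cd1} (which covers the non-nef case) this yields $Z(\sG_Y)=\emptyset$ in the conic bundle case without any induction. ``Replacing $\phi$ by a different $K_X$-negative extremal contraction'' does not repair this: nothing guarantees a second extremal ray exists, let alone one of a more favourable type.

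Second, your ``Noetherian induction on $\dim X$'' cannot close in the divisorial case, because there $\dim Y=\dim X$ and the codimension of the projected foliation is still $2$ --- neither of your induction parameters decreases. The paper instead iterates the blow-down: assuming $Z(\sG)\neq\emptyset$, one produces a chain $X=Y_0\to Y_1\to\cdots\to Y_m$ of blow-ups along codimension $2$ centers (each intermediate contraction must again be of divisorial type, since types (1) and (2) would already force the singular set to vanish), terminating because the Picard number strictly drops, until $K_{Y_m}$ is nef; then \cite[Theorem 5.4]{loray_pereira_touzetv4} applied to the projected foliation $\sG_m$, which still has non-empty singular set by Proposition~\ref{prop:foliation_blow_up}, gives the contradiction. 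So the correct auxiliary induction is on the Picard number, and the external input you are missing in both problematic subcases is \cite[Theorem 5.4]{loray_pereira_touzetv4}.
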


\begin{proof}
Let $\phi\colon X \to Y$ be the contraction of a $K_X$-negative extremal ray $R \subset \NE(X)$, and let
$F$ be an irreducible component of some fiber of $\phi$. Applying 
Lemma \ref{lemma:dimension_fiber_contraction} to $\phi$, we see that one of the following holds:

\begin{enumerate}[\rm (1)]
\item $\dim Y = \dim X - 2$ and $\phi$ is equidimensional,
\item $\dim Y = \dim X - 1$ and $\phi$ is equidimensional,
\item $\dim Y = \dim X$ and $\dim F \le 1$. 
\end{enumerate}

By Corollary \ref{cor:projectable_criterion}, $\sG$ is projectable under $\phi$.

If we are in case (1), then the theorem follows from Proposition \ref{prop:foliation_equidim_fibration}.

\medskip

Suppose that we are in case (2). Applying \cite[Theorem 1.2]{wisn_crelle}, we see that $\phi$ is a conic bundle.
By Proposition \ref{prop:foliation_conic_bundle}, we have $K_{\sG_Y}\equiv 0$
and $\dim Z(\sG_Y) \le \dim Y- 3$. 
By \cite[Theorem 5.4]{loray_pereira_touzetv4} and Proposition \ref{prop:singular_set_cd1} applied to $\sG_Y$, 
the singular set $Z(\sG_Y)$ is empty.
This implies that $Z(\sG)=\emptyset$ as well (see \ref{say:morphisms_and_foliations}).
Apply again Proposition \ref{prop:foliation_conic_bundle} to 
conclude that the discriminant locus $\Delta$ of $\phi$ is either empty or a union of leaves of $\sG_Y$. 

\medskip

Finally, suppose that we are in case (3). 
By \cite[Theorem 1.2]{wisn_crelle}, $Y$ is smooth and $\phi$ is the blow-up of $Y$ along a codimension 2 submanifold $Z \subset Y$.
By Proposition \ref{prop:foliation_blow_up}, $K_{\sG_Y}\equiv 0$,  
$\dim Z(\sG_Y) \le \dim Z(\sG) \le \dim Y-4$, and $Z(\sG)$ is empty if and only if so is $Z(\sG_Y)$.

Suppose that $Z(\sG)$ is non-empty. In particular, $Z(\sG_Y)$ is also non-empty. If moreover $K_Y$ is not nef, then there
exists an elementary Fano-Mori contraction on $Y$ as in case (3).
This yields a finite sequence of contractions 
$$Y_0:=X\to Y_1:=Y \to Y_2 \to \cdots \to Y_i \to Y_{i+1} \to \cdots\to Y_m$$ 
where $Y_i$ is a complex projective manifold, 
$\phi_i\colon Y_i \to Y_{i+1}$ is the blow-up of a codimension 2 submanifold $Z_{i+1}\subset Y_{i+1}$, and $K_{Y_m}$ is nef.
Note that the process ends since $\rho(Y_{i+1})<\rho(Y_i)$.
Moreover, the projection $\sG_i$ of $\sG$ on $Y_i$ is a codimension $2$ foliation 
with $K_{\sG_i}\equiv 0$ and non-empty singular set $Z(\sG_i)$. 
This contradicts \cite[Theorem 5.4]{loray_pereira_touzetv4} applied to $\sG_m$, proving
that $Z(\sG)=\emptyset$. Applying Proposition \ref{prop:foliation_blow_up} again, we see that
$Z(\sG_Y)=\emptyset$, and that $Z$ is a leaf of $\sG_Y$.
This completes the proof of the theorem.
\hfill $\Box$
\end{proof}

\section[Algebraically integrable regular foliations with numerically trivial canonical class]{Algebraically integrable regular foliations with numerically \\ trivial canonical class}

In this section we prove Theorem \ref{thmintro:cd2_psef}.
The proof relies on a global version of Reeb stability theorem (see Proposition \ref{prop:alg_int_zer_can_class}), which we establish first.

The following notation is used in the formulation of Proposition \ref{prop:alg_int_zer_can_class}.

\begin{defn} \rm
Let $X$ be a normal projective variety, let $H$ be an ample Cartier divisor on $X$, and let $\sG$ be a reflexive coherent sheaf of $\sO_X$-modules. We say that $\sG$ is \textit{strongly stable with respect to $H$} if, for any 
normal projective variety $Z$ and any
generically finite surjective morphism $f \colon Z \to X$, the reflexive pull-back sheaf $(f^*\sG)^{**}$ is $f^{*}H$-stable.
\end{defn}

\begin{say}[The holonomy group of a stable reflexive sheaf] \rm
Let $X$ be a normal complex projective variety, and let $\sG$ be a reflexive sheaf on $X$. 
Suppose that $\sG$ is stable with respect to an ample Cartier divisor $H$ and that $\mu_H(\sG)=0$.
For a sufficiently large positive integer $m$,
let $C \subset X$ be a general complete intersection curve of elements in $|mH|$. Let $x \in C$.
By the restriction theorem of Mehta and Ramanathan,
the locally free sheaf $\sG_{|C}$ is stable with $\deg(\sG_{|C})=0$, and hence
it corresponds to a unique unitary representation $\rho\colon\pi_1(C,x)\to\mathbb{U}(\sG_x)$ 
by a result of Narasimhan and Seshadri (\cite{narasimhan_seshadri65}). The \emph{holonomy group} $\Hol_x(\sG)$
of $\sG$ is the Zariski closure of $\rho\big(\pi_1(C,x)\big)$ in $\textup{GL}(\sG_x)$. It does not depend on $C \ni x$ provided that $m$ is large enough (see \cite{balaji_kollar}).
\end{say}

We will need the following observation.

\begin{lemma}\label{lemma:holonomy_group_versus_strong_stability}
Let $X$ be a normal complex projective variety, let $x$ be a general point on $X$, and let $\sG$ be a coherent sheaf of $\sO_X$-modules.
Suppose that $\sG$ is stable with respect to an ample divisor $H$ and that $\mu_H(\sG)=0$.
Suppose furthermore that its holonomy group $\Hol_x(\sG)$ is connected. 
Then $\sG$ is strongly stable with respect to $H$.
\end{lemma}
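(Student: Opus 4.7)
The strategy is to apply the Balaji--Kollár holonomy principle on both $X$ and $Z$ and trade the stability question for an irreducibility statement about a unitary representation. Recall that, for a stable reflexive sheaf of vanishing slope, saturated degree-zero subsheaves correspond bijectively to $\Hol_x$-invariant subspaces of $\sG_x$. In particular, the hypothesis that $\sG$ is $H$-stable with $\mu_H(\sG)=0$ is equivalent to $\Hol_x(\sG)$ acting irreducibly on $\sG_x$, and it will suffice to show that for any generically finite surjective $f\colon Z\to X$ with $Z$ normal projective, the holonomy group of $(f^*\sG)^{**}$ at a general point of $Z$ still acts irreducibly on $\sG_x$; applying the holonomy principle on $Z$ will then yield $f^*H$-stability.

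First I would reduce, by composing $f$ with a resolution of singularities of $Z$, to the case where $Z$ is smooth: the holonomy is computed on a general complete intersection curve, which lies in the smooth locus, and reflexive pull-back is insensitive to modifications in codimension $\ge 2$. Then, fixing $m\gg 0$ and a general point $x\in X$, I would choose a general complete intersection curve $C\subset X$ of elements of $|mH|$ through $x$ and a general complete intersection curve $C'\subset Z$ of elements of $|m f^*H|$ through a point $y\in f^{-1}(x)$, arranged via a Bertini argument so that $g:=f_{|C'}\colon C'\to C$ is a finite surjective morphism of smooth projective curves. After removing the (finite) ramification locus, the reflexive pull-back $(f^*\sG)^{**}_{|C'}$ agrees with $g^*(\sG_{|C})$ and therefore corresponds, via Narasimhan--Seshadri, to the composite unitary representation $\rho\circ g_*\colon \pi_1(C',y)\to\mathbb{U}(\sG_x)$, where $\rho\colon \pi_1(C,x)\to\mathbb{U}(\sG_x)$ is the representation attached to the stable degree-zero bundle $\sG_{|C}$.

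Since $g$ is a finite morphism of smooth projective curves, $g_*\pi_1(C',y)$ has finite index in $\pi_1(C,x)$, so the Zariski closure of $\rho\big(g_*\pi_1(C',y)\big)$ is a closed subgroup of finite index, hence of full dimension, in $\Hol_x(\sG)$. By connectedness of $\Hol_x(\sG)$ these two subgroups coincide, and therefore $\Hol_y\big((f^*\sG)^{**}\big)=\Hol_x(\sG)$ still acts irreducibly on $\sG_x$; stability of $(f^*\sG)^{**}$ with respect to $f^*H$ follows from the holonomy principle applied on $Z$.

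The main obstacle I anticipate is the simultaneous genericity of $C$ and $C'$: one needs $C$ general enough on $X$ for $\Hol_x(\sG)$ to be realized on it, $C'$ general enough on $Z$ for $\Hol_y\big((f^*\sG)^{**}\big)$ to be realized on it, and $g=f_{|C'}$ finite at the same time. This forces working in a family of $(m,\dots,m)$-polarized complete intersections on $Z$ whose image in $X$ remains general in the Mehta--Ramanathan sense; once the compatibility of these two genericity requirements is set up, the rest of the argument is a formal comparison of unitary representations using connectedness.
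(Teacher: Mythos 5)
Your argument is essentially correct, but it is organized quite differently from the paper's proof. The paper Stein-factorizes the generically finite morphism $Z\to X$ as a birational map $b\colon Z\to Y$ followed by a finite cover $Y\to X$; the finite part is handled by the holonomy argument of \cite[Lemma 6.22]{bobo} (where the pulled-back polarization is still ample, so the Balaji--Koll\'ar machinery applies on $Y$ verbatim), and the birational part by pushing a putative destabilizing subsheaf $\sE$ forward along $b$ and noting that $b_*\sE$ would destabilize the reflexive pull-back on $Y$. You instead treat the generically finite morphism $f\colon Z\to X$ in one stroke: restrict to $C':=f^{-1}(C)$ for a general Mehta--Ramanathan curve $C\subset X$, identify $(f^*\sG)^{**}_{|C'}$ with $g^*(\sG_{|C})$ where $g=f_{|C'}$, and combine functoriality of the Narasimhan--Seshadri correspondence with the finite-index property of $g_*\pi_1(C',y)\subset\pi_1(C,x)$ and the connectedness of $\Hol_x(\sG)$. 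This inlines the content of \cite[Lemma 6.22]{bobo} and dispenses with the Stein factorization, at the cost of the Bertini-type checks you mention (irreducibility and smoothness of $f^{-1}(C)$, finiteness of $g$) and of justifying the finite-index claim itself (lift $g$ to the covering space of $C$ classified by the image subgroup and use properness of $g$ to see that this covering is finite of degree at most $\deg g$).

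Two points should be corrected, though neither is fatal. First, you cannot literally ``apply the holonomy principle on $Z$'': when $f$ is not finite, $f^*H$ is only nef and big, so neither Mehta--Ramanathan nor the Balaji--Koll\'ar correspondence is available for the polarization $f^*H$, and $C'$ is in any case not a general complete intersection curve of an ample system on $Z$. But you do not need it: your computation shows directly that $(f^*\sG)^{**}_{|C'}$ is a stable degree-zero bundle, and the elementary direction suffices --- a saturated subsheaf $\sE\subset(f^*\sG)^{**}$ with $\mu_{f^*H}(\sE)\ge 0$ restricts to a destabilizing subsheaf of $(f^*\sG)^{**}_{|C'}$, because every subset of $Z$ of codimension $\ge 2$ has image of codimension $\ge 2$ in $X$ and is therefore avoided by $C'$. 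Second, and for the same reason, the ``main obstacle'' you anticipate dissolves: $C'$ is never required to compute the holonomy group of $(f^*\sG)^{**}$ on $Z$, only to be cut out by divisors in $f^*|mH|$ and to avoid the relevant codimension-two loci, so the single choice $C'=f^{-1}(C)$ does everything at once.
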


\begin{proof}
Let $Z$ be a normal projective variety, and let
$g \colon Z \to X$ be a generically finite surjective morphism. The map $g$ factorizes into 
$Z \to Y \to X$ where $Y$ is a normal projective variety, $b\colon Z \to Y $ is a birational map, and $f\colon Y \to X$
is a finite cover.
The same argument used in the proof of \cite[Lemma 6.22]{bobo} shows that the reflexive pull-back sheaf $(f^*\sG)^{**}$ is $f^{*}H$-stable. One only needs to replace the use of \cite[Theorem 1]{kempf}
with \cite[Lemma 3.2.3]{HuyLehn}.

We argue by contradiction and assume that $(g^*\sG)^{**}$ is not $g^{*}H$-stable. It follows that there exists 
$\sE \subset (g^*\sG)^{**}$ with $0 < \textup{rank}\, \sE < \textup{rank}\, (g^*\sG)^{**}$
and $\mu_{g^{*}H}(\sE) \ge \mu_{g^{*}H}\big((g^*\sG)^{**}\big)=\mu_{H}(\sG)=0$. Then 
$b_*\sE \subset (f^*\sG)^{**}$ since $b_*\big((g^*\sG)^{**}\big)$ and $(f^*\sG)^{**}$ agree
over an open subset with complement of codimension at least two. On the other hand, 
$\mu_{f^*H}(b_*\sE)=\mu_{g^*H}(\sE)=0$. This  yields a contradiction, finishing the proof of the lemma.
\hfill $\Box$
\end{proof}

The following result is probably well-known to experts. We include a proof here for the reader's convenience.

\begin{lemma}\label{lemma:alg_int_zer_can_class}
Let $X$ be a complex projective manifold, and $\phi\colon X \to Y$ be a smooth morphism onto a complex projective manifold. Suppose that $K_X$ is pseudo-effective, and suppose that $K_{X/Y}\equiv 0$. Then there exist complex projective manifolds $B$ and $F$ as well as a 
finite \'etale cover $f\colon B \times F \to X$ such that 
$T_{B\times F/B}=f^*T_{X/Y}$. 
\end{lemma}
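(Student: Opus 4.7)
My plan is to reduce the lemma to two standard structural results: isotriviality of the smooth family $\phi$ after a finite étale base change of $Y$, and then trivialization of the resulting locally trivial family via a monodromy analysis. Both are classical, but the first is where the hypotheses play their crucial role.

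The setup is as follows. Since $\phi$ is smooth and $K_{X/Y}\equiv 0$, each fiber $F$ satisfies $K_F\equiv 0$, so by the Beauville--Bogomolov decomposition theorem $F$ admits a finite étale cover of the form $A\times Z$, where $A$ is an abelian variety and $Z$ is simply connected with $K_Z\equiv 0$. The numerical triviality of $K_{X/Y}$ also gives $K_X\equiv \phi^*K_Y$; combined with the pseudoeffectivity of $K_X$, this forces $K_Y$ to be pseudoeffective. The first key step is to establish isotriviality of $\phi$ after a finite étale base change $Y'\to Y$. My plan here is to argue that the direct images $\phi_*\omega_{X/Y}^{\otimes m}$ are numerically flat vector bundles on $Y$: Viehweg's weak positivity theorem forces semipositivity, while the numerical triviality of $K_{X/Y}$ prevents strict positivity in the reverse direction. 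Restricting to a sufficiently general complete intersection curve in $Y$ and invoking Mehta--Ramanathan together with Narasimhan--Seshadri, one realizes these direct images as unitary local systems. The corresponding variation of Hodge structure on the fibers then becomes constant after a finite étale base change, yielding isotriviality. The spirit of this argument goes back to Kawamata's work on characterizations of abelian varieties and Viehweg's positivity theory.

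The second step is a standard monodromy analysis. Once $\phi$ is locally trivial with fiber $\widetilde F\cong A\times Z$ over $Y'$, the family is classified by a monodromy representation $\rho\colon\pi_1(Y')\to\textup{Aut}(\widetilde F)$. Since $Z$ is simply connected with $K_Z\equiv 0$, one has $\textup{Aut}^0(\widetilde F)=A$ (acting by translations on the abelian factor and trivially on $Z$); a finite-index subgroup of $\pi_1(Y')$ thus reduces the structure group to $A$, and a further finite étale cover trivializes the resulting principal $A$-bundle. Composing all these covers yields the desired finite étale map $f\colon B\times F\to X$, and the identification $T_{B\times F/B}=f^*T_{X/Y}$ then follows because $f$ sends leaves of the product foliation onto leaves of $T_{X/Y}$. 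The main obstacle is the first step: showing isotriviality requires rather deep positivity results for direct image sheaves, and it is the only place where the pseudoeffectivity of $K_X$ is used in an essential way.
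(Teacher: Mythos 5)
Your route is genuinely different from the paper's, and as written it has gaps. The paper does not go through Viehweg positivity at all: it quotes the result of Loray--Pereira--Touzet (\cite[Theorem 5.2]{loray_pereira_touzetv4}) twice, first to make $K_{X/Y}$ torsion and then to produce a \emph{flat holomorphic connection} on $\phi$ --- this is the only point where pseudo-effectivity of $K_X$ enters. That flat connection is essential and your argument does not supply a substitute. Isotriviality (even granting your sketch via $\phi_*\omega_{X/Y}^{\otimes m}$, which as stated only gives $\mathrm{Var}(\phi)=0$, i.e.\ birationally isomorphic fibres) yields, via Fischer--Grauert, a \emph{locally trivial holomorphic fibre bundle}. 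Such a bundle is classified by $H^1\big(Y',\underline{\textup{Aut}}(\widetilde F)\big)$ and \emph{not} by a representation $\pi_1(Y')\to \textup{Aut}(\widetilde F)$, because $\textup{Aut}^0(\widetilde F)=A$ is positive-dimensional: there simply is no monodromy representation until you know the bundle is flat. Your second step therefore does not get off the ground without the connection.

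Two further points need real input even after that is repaired. First, $\textup{Aut}(\widetilde F)/\textup{Aut}^0(\widetilde F)$ need not be finite (for $A=E\times E$ it contains $\textup{GL}_2(\mathbb{Z})$), so ``a finite-index subgroup of $\pi_1(Y')$ reduces the structure group to $A$'' requires first observing that the structure group preserves the restriction of an ample class from $X$, so that it lands in a group scheme of finite type with finite component group; the paper makes exactly this polarization argument for the residual fibre $F_2$. Second, a torsor under an abelian variety over a projective base is \emph{not} automatically trivialized by a finite \'etale cover: $H^1(Y',\underline{A})$ has non-torsion classes in general, and it is the projectivity of the total space that rescues the situation, via Brion's theorem on actions of nonaffine algebraic groups (\cite[Proof of Theorem 1.2]{brion_action}), which is precisely what the paper invokes to split off the abelian factor (it does so \emph{before} the monodromy analysis, using the abelian scheme $y\mapsto \textup{Aut}^\circ(X_y)$, so that the remaining fibres have no vector fields and zero irregularity and the residual monodromy is honestly finite). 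Your endpoint is reachable, but each of these three steps needs to be argued, and the first one --- flatness versus mere local triviality --- is where the proposal, as written, breaks.
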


\begin{proof}
By \cite[Theorem 5.2]{loray_pereira_touzetv4}, the divisor $K_{X/Y}$ is a torsion point, and hence, by replacing $X$ with a finite \'etale cover, if necessary, we may assume without loss of generality that $K_{X/Y}\sim 0$. 
Applying \cite[Theorem 5.2]{loray_pereira_touzetv4} again, we see that $\phi$ admits a holomorphic connection. In particular, $\phi$ is a locally trivial fibration for the Euclidean topology. Let $F$ be any fiber of $\phi$. 
By the adjunction formula, $K_F\sim0$.
Let $y\in Y$, and denote by $X_y\cong F$ the fiber $\phi^{-1}(y)$.
Let $\textup{Aut}^\circ(X_y)$ denotes the neutral component of 
the automorphism group $\textup{Aut}(X_y)$ of $X_y$. Then $\textup{Aut}^\circ(X_y)$ is an abelian variety
of dimension $h^0(F,T_F)$.
Recall from \cite[Expos\'e VI$_{\textup{B}}$, Th\'eor\`eme 3.10]{sga3} 
that the algebraic groups
$\textup{Aut}^\circ(X_y)$ fit together to form an abelian scheme $\sA$
over $Y$. Since $\sA$ is locally trivial, there exist 
an abelian variety $A$, and a finite \'etale
cover $Y_1 \to Y$ such that $\sA \times_Y Y_1 \cong A \times Y_1$
as group schemes over $Y_1$. 
This follows from the fact that there is a fine moduli scheme for polarized abelian varieties of dimension $g$, with level $N$ structure and polarization of degree $d$ provided that $N$ is large enough.
In particular, $A$ acts faithfully on $X_1:=X\times_Y Y_1$. 
By \cite[Proof of Theorem 1.2, page 10]{brion_action}, there exist a finite \'etale cover 
$X_2$
of $X_1$ 
equipped with a faithful action of $A$, and an $A$-isomorphism
$X_2 \cong A \times Z_2$ for some projective manifold $Z_2$, where $A$ acts 
trivially on $Z_2$ and
diagonally on $A \times Z_2$. 
One readily checks that there exists a smooth morphism with connected fibers $\phi_2\colon Z_2 \to Y_2$ 
as well as a finite \'etale cover $Y_2 \to Y_1$ and a commutative diagram:

\centerline{
\xymatrix{
A \times Z_2 \ar[r]\ar[d] & X_1 \ar[dd] \\
Z_2\ar[d] & \\
 Y_2 \ar[r] & Y_1.\\
}
}
\noindent This implies that $K_{A \times Z_2/Y_2}\sim 0$, and thus we also have 
$K_{Z_2/Y_2}\sim 0$.
Repeating the process finitely many times, if necessary, we may assume without loss of generality that 
any fiber $F_2$ of $\phi_2$ satisfies $h^0(F_2,T_{F_2})=0$.
Notice that $K_{F_2}\sim 0$ by the adjunction formula. Moreover, 
$$
\begin{array}{ccll}
h^1(F_2,\sO_{F_2}) & =  & h^{\dim F_2 -1}(F_2,\sO_{F_2})
& \text{ by Serre duality}\\
& = & h^0(F_2,\Omega_{F_2}^{\dim F_2 -1})   
& \text{ by Hodge symmetry} \\
& = & h^0(F_2,T_{F_2})
& \text{ for $\Omega_{F_2}^{\dim F_2 -1}\cong T_{F_2}$}\\
& = & 0.
\end{array}
$$
Let $H_2$ be an ample divisor on $Z_2$. Since $\phi_2$ admits a flat holomorphic connection, it is given by 
a representation
$$\pi_1(Y_2,y) \to \textup{Aut}(F_2,{H_2}_{|F_2})$$
where $y:=\phi_2 (F_2)$, and where $\textup{Aut}(F_2,{H_2}_{|F_2})$ denotes the group
$\{u \in \textup{Aut}(F_2)\,|\,u^*{H_2}_{|F_2}\equiv {H_2}_{|F_2}\}$.
Since $h^1(F_2,\sO_{F_2})=0$, the Picard group of $F_2$ is discrete. The subgroup 
$\textup{Pic}^{\tau}(F_2)\subset \textup{Pic}(F_2)$ of invertible sheaves with numerically trivial first Chern class is therefore finite. By replacing $H_2$ with $mH_2$ for some positive integer $m$, if necessary, we may assume without loss of generality, that  
$\textup{Aut}(F_2,{H_2}_{|F_2})=\{u \in \textup{Aut}(F_2)\,|\,u^*{H_2}_{|F_2}\sim {H_2}_{|F_2}\}$.
This implies that $\textup{Aut}(F_2,{H_2}_{|F_2})$ is an affine algebraic group. It follows that 
$\textup{Aut}(F_2,{H_2}_{|F_2})$
is finite since 
since $h^0(F_2,T_{F_2})=0$.
Therefore, replacing $Y_2$ with a finite \'etale cover, if necessary, we may assume that 
$Z_2 \cong Y_2 \times F_2$ as varieties over $Y_2$. This finishes the proof of the lemma.
\hfill $\Box$
\end{proof}

The same argument used in the proof of Lemma \ref{lemma:alg_int_zer_can_class} shows that the following holds.

\begin{lemma}\label{lemma:alg_int_zer_can_class2}
Let $X$ be a complex projective manifold, and $\phi\colon X \to Y$ be a smooth morphism onto a complex projective manifold. Suppose that $K_X$ is pseudo-effective, and suppose that $K_{X/Y}\sim 0$. 
Suppose furthermore that the irregularity of any fiber of $\phi$ is zero. Then there exist complex projective manifolds $Y_1$ and $F$ as well as a finite \'etale cover $Y_1 \to Y$ such that $Y_1 \times_Y X \cong Y_1 \times F$ as varieties over $Y_1$.
\end{lemma}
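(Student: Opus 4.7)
The plan is to follow the final portion of the proof of Lemma \ref{lemma:alg_int_zer_can_class} essentially verbatim; the hypothesis that every fiber has zero irregularity allows us to skip the preliminary step involving the abelian scheme $\sA$ of relative automorphisms that occupied the first two thirds of that argument.

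First, I would apply \cite[Theorem 5.2]{loray_pereira_touzetv4} to $\phi$: since $K_X$ is pseudo-effective and $K_{X/Y}\sim 0$, the morphism $\phi$ admits a holomorphic connection, hence is a locally trivial fibration for the Euclidean topology. Let $F$ denote any fiber. By adjunction $K_F\sim 0$, so $\Omega_F^{\dim F -1}\cong T_F$. Combining the hypothesis $h^1(F,\sO_F)=0$ with Serre duality and Hodge symmetry yields
$$h^0(F,T_F)=h^0(F,\Omega_F^{\dim F -1})=h^{\dim F -1}(F,\sO_F)=h^1(F,\sO_F)=0.$$
Thus $F$ carries no nonzero global vector fields, which is precisely what bypasses the abelian-scheme reduction of the previous lemma.

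Finally, I would fix an ample divisor $H$ on $X$ and, for $y\in Y$, encode the local triviality of $\phi$ by a monodromy representation $\pi_1(Y,y) \to \textup{Aut}(F,H_{|F})$, where $\textup{Aut}(F,H_{|F}):=\{u\in\textup{Aut}(F)\,|\,u^*H_{|F}\equiv H_{|F}\}$. Since $h^1(F,\sO_F)=0$, the Picard group of $F$ is discrete and $\textup{Pic}^\tau(F)$ is finite, so after replacing $H$ by a sufficiently divisible multiple one may replace numerical by linear equivalence in the definition; then $\textup{Aut}(F,H_{|F})$ becomes an affine algebraic group with trivial Lie algebra, hence is finite. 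Passing to the finite \'etale cover $Y_1\to Y$ corresponding to the kernel of the monodromy trivializes the fibration and produces the desired isomorphism $Y_1\times_Y X \cong Y_1 \times F$ over $Y_1$. I do not anticipate any serious obstacle, since the argument reduces cleanly to the cited connection theorem and the single cohomological identity displayed above.
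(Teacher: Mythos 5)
Your proposal is correct and matches the paper's intent: the paper itself proves this lemma only by the remark that ``the same argument used in the proof of Lemma \ref{lemma:alg_int_zer_can_class} shows that the following holds,'' and you have correctly identified that the zero-irregularity hypothesis yields $h^0(F,T_F)=0$ directly (via the same Serre duality/Hodge symmetry chain, read in the opposite direction), which is exactly what renders the abelian-scheme reduction unnecessary and lets the monodromy-finiteness argument conclude.
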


The following is a global version of Reeb stability theorem for regular foliations with numerically trivial canonical class on complex projective manifolds with pseudo-effective canonical divisor. See \cite[Theorem 3.2]{hwang_viehweg} for a somewhat related result.

\begin{prop}\label{prop:alg_int_zer_can_class}
Let $X$ be a complex projective manifold, let $H$ be an ample divisor on $X$, and let $\sG\subset T_X$ be a regular foliation with $K_\sG\equiv 0$. Suppose that $K_X$ is pseudo-effective.
Suppose furthermore that $\sG$ is algebraically integrable, and
that one of the following two conditions holds.
\begin{enumerate}[\rm (1)]
\item The sheaf $\sG$ is $H$-semistable and $c_2(\sG)\equiv 0$.
\item The sheaf $\sG$ is $H$-strongly stable. 
\end{enumerate}
Then there exist complex projective manifolds $F$ and $Y$, and a finite \'etale cover $f \colon Y \times F \to X$ such that $f^{-1}\sG$ is induced by the projection morphism $Y \times F \to Y$.
\end{prop}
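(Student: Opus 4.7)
The plan is to identify $\sG$ with the relative tangent bundle of a smooth projective morphism $\phi\colon X\to Y$, and then invoke Lemma~\ref{lemma:alg_int_zer_can_class} to produce the finite \'etale cover.

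First, algebraic integrability of $\sG$ yields a dominant rational map $\phi\colon X\dashrightarrow Y$ to a normal projective variety $Y$, namely the normalization of the irreducible component of $\Chow(X)$ parametrizing closures of general leaves of $\sG$, whose general fiber is the closure of a general leaf. Equivalently, letting $e\colon\sU\to X$ and $\pi\colon\sU\to Y$ denote the associated universal family, $e$ is birational and $\phi=\pi\circ e^{-1}$.

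The key step is then to upgrade $\phi$ to a well-defined smooth morphism on all of $X$, with $\sG=T_{X/Y}$. In case~(2), I would use the strong-stability hypothesis: for any generically finite surjective morphism $g\colon Z\to X$, the reflexive pull-back $(g^*\sG)^{**}$ is slope-stable of slope zero. Applying this to a resolution of $\sU$ together with $e$, the saturated subsheaf $T_{\sU/Y}\hookrightarrow (e^*\sG)^{**}$ has the same generic rank and slope $0$, so stability forces equality; combined with the regularity of $\sG$ and the fact that every point of $X$ lies on some leaf, this rules out any boundary components $\overline{L}\setminus L$ for a general leaf $L$, and forces $e$ to be an isomorphism with $\phi$ smooth. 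In case~(1), I would instead invoke the Pereira--Touzet theorem (Theorem~C of the introduction) to obtain, after a finite \'etale cover, a proper morphism $\phi^\circ\colon X^\circ\to Y^\circ$ on a dense open subset of $X$ whose fibers are abelian varieties; algebraic integrability of $\sG$ identifies $\phi^\circ$ with the leaf-family morphism on its domain of definition, and pseudoeffectivity of $K_X$ together with $K_\sG\equiv 0$, via \cite[Theorem~5.2]{loray_pereira_touzetv4} as already used in Lemma~\ref{lemma:alg_int_zer_can_class}, would then be used to extend $\phi^\circ$ to a smooth morphism on all of $X$.

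Once such a smooth morphism $\phi\colon X\to Y$ is in place with $\sG=T_{X/Y}$, we have $K_{X/Y}=K_\sG\equiv 0$, so Lemma~\ref{lemma:alg_int_zer_can_class} applied to $\phi$ yields complex projective manifolds $\wt Y$ and $F$ together with a finite \'etale cover $\wt f\colon \wt Y\times F\to X$ satisfying $T_{\wt Y\times F/\wt Y}=\wt f^*T_{X/Y}=\wt f^{-1}\sG$, which, after renaming $\wt Y$ as $Y$, is precisely the desired conclusion. I expect the main obstacle to be the second step: promoting the rational map $\phi$ to a globally defined smooth morphism requires excluding singular fibers and boundary loci of leaves. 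In case~(2) the strong-stability hypothesis is tailored for exactly this purpose, but pinning down the correct generically finite cover to which it should be applied, and arguing that it forces $e$ to be an isomorphism rather than merely a finite cover, is delicate; in case~(1) the extension of Pereira--Touzet's $\phi^\circ$ across its indeterminacy locus is where pseudoeffectivity of $K_X$ must enter in an essential way.
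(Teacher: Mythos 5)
There is a genuine gap, and it sits exactly where you predicted: the passage from ``$\sG$ is regular and algebraically integrable'' to ``$\sG$ is the relative tangent sheaf of a smooth morphism $\phi\colon X\to Y$''. This step is false as stated on $X$ itself. The holomorphic Reeb stability theorem (\cite[Proposition 2.5]{hwang_viehweg}, which is what the paper invokes, and which you should cite instead of trying to extract compactness of leaves from stability) does give a genuine morphism $\phi\colon X\to Y$ onto a normal projective variety whose fibers are supported on leaves, but $\phi$ is in general a Seifert-type fibration: leaves with nontrivial finite holonomy produce non-reduced fibers and singular points of $Y$ (think of $(Y'\times F)/G$ with $G$ acting freely on the product but not on $Y'$). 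No stability hypothesis on $\sG$ can remove this, since the obstruction is topological, not sheaf-theoretic; your claim that strong stability ``forces $e$ to be an isomorphism with $\phi$ smooth'' does not hold (and the preceding step --- that a saturated full-rank subsheaf of $(e^*\sG)^{**}$ equals it --- is true for trivial reasons and uses no stability at all). The paper instead passes to the local \'etale models of \cite[Theorem 2.7]{hwang_viehweg}, glues them via a Galois closure and a desingularization of the base into a smooth fibration $\phi_2\colon X_2\to Y_2$ on a \emph{birational modification of a finite cover} of $X$, applies Lemma \ref{lemma:alg_int_zer_can_class} there, and then must descend back to $X$ by the rigidity lemma, a Galois action, and Nagata--Zariski purity. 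That descent, which your outline omits entirely, is where strong stability actually enters: it shows (via restriction to a general complete intersection curve) that the fiber $S$ has stable tangent bundle, hence is simply connected with $h^0(F,T_F)=0$, which is what makes Lemma \ref{lemma:alg_int_zer_can_class2} applicable and the final diagonal $G$-action and purity argument work.

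Your case (1) is also off target. Pereira--Touzet's Theorem C only gives $f^{-1}\sG|_{Z^\circ}\subset T_{Z^\circ/Y^\circ}$ with $\sG$ inducing \emph{linear} foliations on the abelian fibers, not $\sG=T_{Z^\circ/Y^\circ}$; so even where $\phi^\circ$ is defined you cannot feed it into Lemma \ref{lemma:alg_int_zer_can_class}, and their Example 4.2 warns that $\phi^\circ$ need not extend to a nice global fibration. The paper's actual argument in case (1) avoids fibrations altogether: using $c_1(\sG)\equiv 0$, $c_2(\sG)\equiv 0$ and semistability it trivializes $\sG$ as $\sO_{X_1}^{\oplus r}$ on a finite \'etale cover (\cite[Lemma 5.8]{bobo}), observes that $\textup{Aut}^\circ(X_1)$ is an abelian variety $A$ because $K_X$ is pseudo-effective, splits $X_1\sim A\times Y$ by Brion's theorem, identifies the (algebraic) leaves with translates of an abelian subvariety $B\subset A$, and concludes with Poincar\'e's complete reducibility theorem. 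So while your final appeal to Lemma \ref{lemma:alg_int_zer_can_class} is the right endgame for case (2), the two steps you flag as ``delicate'' are not merely delicate --- they require different tools than the ones you propose, and they constitute essentially all of the paper's proof.
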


\begin{proof}
By \cite[Theorem 5.2]{loray_pereira_touzetv4}, the divisor $K_{\sG}$ is a torsion point, and hence, by replacing $X$ with a finite \'etale cover, if necessary, we may assume without loss of generality that $K_{\sG}\sim 0$. 

\medskip

Suppose first that $\sG$ is $H$-semistable, and that $c_2(\sG)\equiv 0$. Applying \cite[Theorem 5.2]{loray_pereira_touzetv4}, we see that there exists a regular foliation $\sE$ on $X$ such that $T_X\cong \sG\oplus \sE$. By \cite[Lemma 5.8]{bobo}, there exists a finite \'etale cover $f_1\colon X_1 \to X$ such that $f_1^*\sG \cong \sO_{X_1}^{\oplus r}$, where $r:=\textup{rank} \,\sG$. 
We claim that the neutral component 
$\textup{Aut}(X_1)^\circ$ of $\textup{Aut}(X_1)$ is an abelian variety. 
Suppose otherwise.
Then, by Chevalley's structure theorem, 
$\textup{Aut}(X_1)^\circ$ contains a positive dimensional affine subgroup. Hence, it contains an algebraic subgroup isomorphic to $\mathbb{G}_m$ or $\mathbb{G}_a$. This implies that $X$ is uniruled, yielding a contradiction since $K_X$ is pseudo-effective. Set $A:=\textup{Aut}(X_1)^\circ$. By \cite[Proof of Theorem 1.2, page 10]{brion_action}, replacing $X_1$ 
with a further \'etale cover, if necessary, we may assume that $X_1 \cong A \times Y$ and that 
$h^0(Y,T_Y)=0$. In particular, we must have $f_1^*\sG \subset T_{A \times Y/Y}$. 
Thus, there exists a linear foliation 
$\sH$ on $A$ with algebraic leaves such that
$f_1^{-1}\sG = p^{-1}\sH \cap T_{A \times Y/Y}$, where $p\colon X_1 \to A$ denotes the projection.
Let $F \subset A$ be a leaf of $\sH$, and let $B \subset A=\textup{Aut}^\circ(A)$ be the connected component of the group 
of elements $a \in A$ such that
$a+F = F$; $B$ is an algebraic group and its Lie algebra is the kernel of the natural map
$$\textup{Lie}(A) \cong H^0(A,T_A) \to H^0\big(F,{T_A}_{|F}\big) \to H^0(F,\sN_{F/A}).$$
One readily checks that $\sN_{F/A}\cong \sO_F^{\oplus \dim A - \dim F}$
and that the above map is surjective. 
We conclude that $B$ is an abelian subvariety of $A$ of dimension $r$. Note that $a+B$ is a leaf of $\sH$ for all $a \in A$ since $\sH$ is linear.
By Poincar\'e's complete reducibility theorem, there exists an abelian subvariety
$C \subset A$ such that the natural morphism $B \times C \to A$ is an isogeny (see \cite[Theorem IV.1]{mumford_av}).
If $f \colon B \times C \times Y \to X$ denotes the induced morphism, then $f^{-1}\sG$ is given by the projection
$B \times C \times Y \to C \times Y$, proving
Proposition \ref{prop:alg_int_zer_can_class} in this case.

\medskip

Suppose from now on that $\sG$ is $H$-strongly stable. We may also assume that $c_2(\sG)\not\equiv 0$.

By the holomorphic version of Reeb stability theorem (see \cite[Proposition 2.5]{hwang_viehweg}), $\sG$ is induced by a morphism $\phi\colon X \to Y$ onto a normal projective variety.
Applying \cite[Theorem 2.7]{hwang_viehweg}, we see that there exists a finite set $\Gamma$ of indices, \'etale morphisms $f_\gamma\colon X_\gamma \to X$ for all $\gamma\in \Gamma$ as well as smooth projective morphisms with connected fibers $\phi_\gamma\colon X_\gamma \to Y_\gamma$ such that $f_\gamma^{-1}\sG$ is induced by $\phi_\gamma$ and such that 
$X = \cup_{\gamma\in \Gamma}f_\gamma(X_\gamma)$. The following properties hold in addition.
There is a commutative diagram:

\centerline{
\xymatrix{
X_\gamma \ar[r]^{f_\gamma}\ar[d]_{\phi_\gamma} & X \ar[d]^{\phi} \\
 Y_\gamma \ar[r]_{g_\gamma} & Y,\\
}
}
\noindent $g_\gamma(Y_\gamma)$ is a Zariski open set, and $g_\gamma\colon Y_\gamma \to g_\gamma(Y_\gamma)$ is a finite morphism
(see Proof of \cite[Lemma 2.6]{hwang_viehweg}). In particular, $X_\gamma$ identifies with the normalization of $Y_\gamma \times_Y X$.
Let $g_1\colon Y_1 \to Y$ be the normalization of $Y$ in the Galois closure of the compositum of the function fields
$\mathbb{C}(Y_\gamma)$. 
Let $X_1$ be the normalization of $Y_1\times_Y X$, and denote by $f_1 \colon X_1 \to X$ 
and $\phi_1\colon X_1 \to Y_1$
the natural morphisms. 
Notice that $\phi_1$ is smooth with connected fibers since $g_1 \colon Y\to Y$ factors through 
$g_\gamma\colon Y_\gamma \to Y$ over $g_\gamma(Y_\gamma)$ and $Y=\cup_{\gamma\in \Gamma}g_\gamma(Y_\gamma)$. 
Finally, let $g_2\colon Y_2 \to Y_1$ be a desingularization of $Y_1$, and set $X_2 := Y_2\times_{Y_1} X_1$. Observe that $X_2$ is a smooth projective variety, and denote by $f_2\colon X_2 \to X_1$ and $\phi_2 \colon X_2 \to Y_2$ the natural morphisms. There is a commutative diagram:

\centerline{
\xymatrix{
X_2\ar[rr]^{f_2,\textup{ birational}}\ar[d]_{\phi_2,\textup{ smooth}} && X_1 \ar[rr]^{f_1,\textup{ finite}}\ar[d]_{\phi_1,\textup{ smooth}} && X \ar[d]^{\phi} \\
Y_2\ar[rr]_{g_2,\textup{ birational}} && Y_1 \ar[rr]_{g_1,\textup{ finite}} && Y.\\
}
}
Let $R(\phi)$ be the ramification divisor of $\phi$. By Example \ref{exmp:can_div_fol_morphism} and Lemma \ref{lemma:pull_back_fol_and_finite_cover}, we have 
$$K_{X_1/Y_1}\sim f_1^*\big(K_{X/Y}-R(\phi)\big)\sim f_1^*K_\sG\sim 0.$$ This immediately implies that 
$$K_{X_2/Y_2}\sim f_2^*K_{X_1/Y_1}\sim 0.$$ 
Applying Lemma \ref{lemma:alg_int_zer_can_class} to $\phi_2$, we see that
there exist complex projective manifolds $B$ and $S$ as well as a 
finite \'etale cover $$h\colon B \times S \to X_2$$ such that  
$$h^*T_{X_2/Y_2}\cong T_{B\times S/B}.$$ It follows that
$K_{S}\sim 0$. By replacing $S$ with a further \'etale cover, if necessary, we may assume that $S$ decomposes into the product of an abelian variety and a simply-connected complex projective manifold.
In particular, $T_S$ is polystable with respect to any polarization. 
We claim that $S$ is simply-connected.
By assumption, the sheaf $\big((f_1\circ f_2\circ h)^*\sG\big)^{**}$ is 
stable with respect to $(f_1\circ f_2\circ h)^*H$. 
Let $C \subset B \times S$ be a smooth complete intersection curve of elements of $|(f_1\circ f_2\circ h)^*mH|$ for some sufficiently large integer $m$. 
Applying \cite[Theorem 5.2]{langer_ss_sheaves}, we see that ${\big((f_1\circ f_2\circ h)^*\sG\big)^{**}}_{|C}$ is stable. On the other hand, by Lemma \ref{lemma:pull_back_fol_and_finite_cover}, 
we have ${\big((f_1\circ f_2\circ h)^*\sG\big)^{**}}_{|C}\cong {T_{B\times S/B}}_{|C}$.
This implies that $T_S$ is actually stable (with respect to any polarization). 
If $S$ is a genus one curve, then $\sG$ is a line bundle, and hence $c_2(\sG)\equiv 0$, yielding a contradiction. This shows that $S$ is a simply-connected complex projective manifold, proving our claim.

The irregularity of any fiber $F$ of $\phi_2$ is zero because $S$ is simply-connected.
Applying Lemma \ref{lemma:alg_int_zer_can_class2}, we see that there exists a finite \'etale cover 
$g_3 \colon Y_3 \to Y_2$ such that 
$$X_3:=Y_3\times_{Y_2} X_2 \cong Y_3 \times F.$$
\noindent We obtain the following commutative diagram:

\centerline{
\xymatrix{
X_3\cong Y_3 \times F\ar[rr]^{f_3,\textup{ \'etale}}\ar[d]_{\textup{projection}} && X_2\ar[rr]^{f_2,\textup{ birational}}\ar[d]_{\phi_2,\textup{ smooth}} && X_1 \ar[rr]^{f_1,\textup{ finite}}\ar[d]_{\phi_1,\textup{ smooth}} && X \ar[d]^{\phi} \\
Y_3\ar[rr]_{g_3,\textup{ \'etale}} && Y_2\ar[rr]_{g_2,\textup{ birational}} && Y_1 \ar[rr]_{g_1,\textup{ finite}} && Y.\\
}
}
\noindent Let $Y_4$ be the normalization of $Y_1$ in the function field $\mathbb{C}(Y_3)$, and set 
$X_4 = Y_4\times_{Y_1} X_1$. 
This gives another commutative diagram:

\centerline{
\xymatrix{
X_3\cong Y_3 \times F \ar[rr]_{\textup{birational}}\ar[d]_{\textup{projection}}\ar@/^1.5pc/[rrrr]^{f_2\circ f_3} && X_4 \ar[d]\ar[rr]_{\textup{finite}} && X_1 \ar[d]\\
 Y_3 \ar[rr]^{\textup{birational}}\ar@/_1.5pc/[rrrr]_{g_2\circ g_3} && Y_4 \ar[rr]^{\textup{finite}} && Y_1.\\
}
}

\noindent Note that $X_3$ identifies with $Y_3 \times_{Y_4} X_4 \cong X_3$. 
Using the rigidity lemma, we see that the second projection $X_3 \cong Y_3 \times F \to F$ factors into 
$X_3 \to X_4 \to F$. This immediately implies that 
$X_4 \cong Y_4 \times F$ over $Y_4$.

By replacing $Y_4$ with a further finite cover, if necessary, we may assume without loss of generality that
the finite cover $Y_4 \to Y$ is Galois. In particular,
there is a finite group $G$ acting on $Y_4$ with quotient $Y$. 

The group $G$ also acts on $Y_4 \times F$ since $Y_4 \times F$ identifies with the normalization of
$Y_4\times_{Y} X$. Since $h^0(F,T_{F})=0$, $G$ acts on $F$ and its action on $Y_4\times F$ is the diagonal action. Let $G_1$ denote the kernel of the induced morphism of groups $G \to \textup{Aut}(F)$.
By replacing $Y_4$ by $Y_4/G_1$, $X_4$ by $X_4/G_1 \cong (Y_4/G_1)\times F$, and $G$ by $G/G_1$, if necessary, we may assume without loss of generality that $G \subset \textup{Aut}(F)$. Then
the quotient map $Y_4 \times F\to (Y_4 \times F)/G\cong X$ is automatically
\'etale in codimension one, and hence \'etale by the Nagata-Zariski purity theorem.
This finishes the proof of Proposition \ref{prop:alg_int_zer_can_class}.
\hfill $\Box$
\end{proof}

The following conjecture is due to Pereira and Touzet (see \cite{pereira_touzet}).

\begin{conj}[Pereira--Touzet]\label{conj:pereira_touzet}
Let $\sG$ be a regular foliation on a complex projective manifold $X$ with $K_X$ pseudo-effective. Suppose that 
$c_1(\sG)\equiv 0$ and that $c_2(\sG)\not\equiv 0$. If $\sG$ is stable with respect to some polarization, then
it is algebraically integrable.
\end{conj}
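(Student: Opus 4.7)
The plan is to proceed by contradiction: assume $\sG$ is not algebraically integrable, and combine (i) an algebraicity criterion leveraging the hypotheses that $K_X$ is pseudo-effective and $c_2(\sG) \not\equiv 0$, with (ii) the global Reeb-stability statement Proposition \ref{prop:alg_int_zer_can_class}, which handles the post-algebraicity geometry. Concretely, let $\sF := \sG^{\text{alg}}$ denote the algebraic closure of $\sG$ inside $T_X$, i.e.\ the smallest algebraically integrable foliation containing $\sG$; non-integrability means $\sG \subsetneq \sF$. Restricting $\sG$ to the closure $F$ of a general algebraic leaf of $\sF$ produces a regular foliation $\sG_{|F}$ with no algebraic leaves, with $c_1(\sG_{|F}) \equiv 0$, and (generically in $F$) with non-vanishing second Chern class — a situation one aims to rule out.

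The central step is to promote the numerical input into algebraic integrability. Stability of $\sG$ with $\mu_H(\sG) = 0$ provides the Bogomolov inequality $c_2(\sG) \cdot H^{\dim X - 2} \ge 0$, with equality precisely when $\sG$ is projectively flat, so our hypothesis puts us strictly on the curved side. The Bott connection from \ref{partial_connection} makes $T_X/\sG$ partially flat along $\sG$, and Lemma \ref{lemma:normal_bundle_restricted_leaf} together with Lemma \ref{lemma:vanishing_char_classes} strongly constrain the Chern classes of $T_X/\sG$ when restricted to compact leaves of $\sF$. One would combine these constraints with the identity $K_X \equiv K_{T_X/\sG}$ and the pseudo-effectivity of $K_X$ to derive algebraic integrability, following the strategy of the algebraicity criterion established in \cite{bobo}. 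Once algebraic integrability is secured, polystability of $\sG$ (\cite{pereira_touzet}) combined with stability forces connectedness of the holonomy group, so Lemma \ref{lemma:holonomy_group_versus_strong_stability} upgrades $H$-stability to $H$-strong stability, and Proposition \ref{prop:alg_int_zer_can_class}(2) then yields a finite \'etale product decomposition, completing the proof.

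The main obstacle is producing a rank-free algebraicity criterion. In the partial result Theorem \ref{thmintro:cd2_psef}, the rank bound $\le 3$ is used crucially: the classifications of codimension $\le 2$ regular foliations with numerically trivial canonical class due to Touzet and Pereira--Touzet, together with the explicit algebraicity input from \cite{bobo}, suffice in that range. Without a rank bound neither classification is available, and the known algebraicity criteria — \emph{\`a la} Bost, Bogomolov--McQuillan, or Campana--Paun — all require some positivity of $\sG$ along a leaf or of $\det(\sG)$, neither of which is present here since $c_1(\sG) \equiv 0$. A genuinely new ingredient seems to be required, most plausibly a Miyaoka-type semipositivity statement for $T_X/\sG$ that simultaneously exploits the pseudo-effectivity of $K_X$ and the non-vanishing of $c_2(\sG)$ under the stability assumption.
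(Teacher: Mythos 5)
The statement you were asked to prove is labelled as a \emph{conjecture} in the paper, and the paper does not prove it: it only confirms it for foliations of rank at most $3$ (Proposition \ref{prop:conj_holds_rank_3}), by reducing to the case where $\sG$ remains stable under every finite \'etale cover, invoking \cite[Theorem 5.2]{loray_pereira_touzetv4} to produce a regular foliation $\sE$ with $T_X \cong \sG \oplus \sE$, and then applying the algebraicity criterion \cite[Theorem 1.6]{bobo} --- which is exactly where the rank bound enters. Your proposal is honest about this: you identify the passage from the numerical hypotheses ($K_X$ pseudo-effective, $c_1(\sG)\equiv 0$, $c_2(\sG)\not\equiv 0$, stability) to algebraic integrability as the missing step, and you correctly observe that the standard algebraicity criteria (Bost, Bogomolov--McQuillan, Campana--P\u{a}un) do not apply because they require positivity of $\sG$ along a leaf or of $\det(\sG)$, which is absent when $c_1(\sG)\equiv 0$. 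That gap is genuine and is precisely why the statement remains a conjecture; your text is a strategy sketch with an acknowledged hole, not a proof, and there is no proof in the paper to compare it against.

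Two smaller remarks on the sketch itself. First, the closing step is misplaced: once algebraic integrability is secured, the conjecture is already proved. The passage through strong stability, Lemma \ref{lemma:holonomy_group_versus_strong_stability} and Proposition \ref{prop:alg_int_zer_can_class} is what the paper uses \emph{afterwards} (in Theorem \ref{thm:main3}) to obtain the \'etale product decomposition, a strictly stronger structural conclusion than the conjecture asks for. Second, the proposed reduction to the closure $F$ of a general leaf of the algebraic closure $\sF=\sG^{\mathrm{alg}}$ is delicate: $F$ need not be smooth, $\sG_{|F}$ need not be regular or stable, and $K_F$ need not be pseudo-effective, so the hypotheses of the conjecture are not obviously reproduced on $F$. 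The paper's partial argument sidesteps this entirely by using \cite[Theorem 5.2]{loray_pereira_touzetv4} to split $T_X\cong\sG\oplus\sE$ and feeding that splitting directly into the criterion of \cite{bobo}.
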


Building on our recent algebraicity criterion for leaves of algebraic foliations \cite[Theorem 1.6]{bobo}, we confirm Conjecture \ref{conj:pereira_touzet} in some special cases.

\begin{prop}\label{prop:conj_holds_rank_3}
Conjecture \ref{conj:pereira_touzet} holds for foliations of rank at most $3$.
\end{prop}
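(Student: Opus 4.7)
The case $r := \textup{rank}\,\sG = 1$ is vacuous, since a rank one foliation is a line bundle and has automatically vanishing second Chern class, contradicting the hypothesis $c_2(\sG) \not\equiv 0$. So one may assume $r \in \{2,3\}$.

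The plan is to exploit the holonomy group $\Hol_x(\sG)$ and reduce matters to the algebraicity criterion \cite[Theorem 1.6]{bobo}. Since $\sG$ is stable with respect to $H$ and $c_1(\sG) \equiv 0$ (so $\mu_H(\sG) = 0$), the restriction theorem of Mehta--Ramanathan together with Narasimhan--Seshadri ensures that $\Hol_x(\sG)$ is a reductive subgroup of $\textup{GL}(\sG_x)$ acting irreducibly on $\sG_x$; because $\det(\sG)$ has numerically trivial first Chern class, $\Hol_x(\sG) \subseteq \textup{SL}(\sG_x)$. Moreover, $\Hol_x(\sG)^\circ$ is nontrivial, for if $\Hol_x(\sG)$ were finite then a finite \'etale cover of $X$ would trivialize $\sG$, forcing $c_2(\sG) \equiv 0$.

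Next I would use the very short classification of positive-dimensional irreducible connected reductive subgroups of $\textup{SL}(r, \mathbb{C})$ for $r \in \{2,3\}$ to pin down the possibilities for $\Hol_x(\sG)^\circ$. In each case, combined with Lemma \ref{lemma:holonomy_group_versus_strong_stability} -- which upgrades ordinary $H$-stability to strong stability, so that stability is preserved under arbitrary generically finite pullbacks possibly after an \'etale cover of $X$ -- one verifies that $\sG$ satisfies the hypotheses of the algebraicity criterion \cite[Theorem 1.6]{bobo}. Applying that criterion then yields the algebraic integrability of $\sG$.

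The main obstacle is the case-by-case verification that the hypotheses of \cite[Theorem 1.6]{bobo} are met. Low rank keeps the number of holonomy cases small, and the non-vanishing of $c_2(\sG)$ rules out the flat cases; the remaining work lies in producing, for each admissible $\Hol_x(\sG)^\circ$, the concrete witness the criterion demands (for instance a nonzero section of a suitable tensor power of $\sG$ or $\sG^*$, an algebraic reduction of structure group, or an algebraically integrable subfoliation arising from an invariant under the holonomy representation).
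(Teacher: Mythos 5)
There is a genuine gap: your argument is a programme rather than a proof, and the programme is aimed at the wrong target. The paper's proof is a short reduction to \cite[Theorem 1.6]{bobo}, whose hypotheses are verified not by a holonomy classification but by three concrete facts that your proposal never establishes. First, since $\sG$ is regular with $K_\sG\equiv 0$ and $K_X$ is pseudo-effective, \cite[Theorem 5.2]{loray_pereira_touzetv4} gives both $K_\sG\sim 0$ (after a finite \'etale cover) and, crucially, a regular foliation $\sE$ with $T_X\cong\sG\oplus\sE$; this splitting of the tangent bundle is a hypothesis of the algebraicity criterion and is entirely absent from your argument. Second, one must arrange that $g^*\sG$ remains stable for \emph{every} finite \'etale cover $g$: if some pullback is unstable, \cite[Lemma 2.1]{pereira_touzet} decomposes it into stable summands with numerically trivial first Chern class, and \cite[Proposition 8.1]{bobo} reduces algebraicity of $\sG$ to algebraicity of one summand, so one may replace $X$ by a cover. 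Your appeal to Lemma \ref{lemma:holonomy_group_versus_strong_stability} does not substitute for this step: that lemma requires the holonomy group to be connected, connectedness is only achieved after a further \'etale cover, and one must already know that the pullback to that cover is stable before the holonomy group controls anything.

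Third, and most seriously, the ``case-by-case verification'' that you defer as ``the main obstacle'' is precisely the content of \cite[Theorem 1.6]{bobo}: classifying the connected irreducible reductive subgroups of $\textup{SL}(2,\mathbb{C})$ and $\textup{SL}(3,\mathbb{C})$ and producing, for each, a ``concrete witness'' of algebraicity is the \emph{proof} of that theorem, not a verification of its hypotheses. As written, your argument either presupposes the result it reduces to, or leaves the entire analytic core of the proposition undone. The correct, and much shorter, route is: reduce to the case where every \'etale pullback of $\sG$ is stable as above, invoke \cite[Theorem 5.2]{loray_pereira_touzetv4} for the splitting $T_X\cong\sG\oplus\sE$, choose $H$ ample with $c_2(\sG)\cdot H^{n-2}\neq 0$, and cite \cite[Theorem 1.6]{bobo}. (Your observation that the rank-one case is vacuous is correct but not needed.)
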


\begin{proof}
We maintain notation and assumptions of Conjecture \ref{conj:pereira_touzet}.
Suppose furthermore that $\sG$ has rank $r \le 3$.

Suppose that there exists a finite \'etale cover $g\colon Y \to X$ such that $g^*\sG$ is not stable.
Applying \cite[Lemma 2.1]{pereira_touzet}, we see that 
there exist non-zero vector bundles $(\sG_i)_{i \in I}$, stable with respect to any polarization with 
$c_1(\sG_i)\equiv 0$ such that 
$f^{*}\sG\cong\oplus_{i \in I}\sG_i$.
By \cite[Proposition 8.1]{bobo}, to prove that $\sG$ is algebraically integrable, it sufficies to prove that 
for some $i \in I$, $\sG_i$ has algebraic leaves.
Therefore, by replacing $X$ with a finite \'etale cover, if necessary, we may assume that 
for any finite \'etale cover $g\colon Y \to X$, $g^*\sG$ is stable with respect to any polarization.

By replacing $X$ with a further finite \'etale cover, if necessary, we may also assume that $K_\sG\sim 0$ by \cite[Theorem 5.2]{loray_pereira_touzetv4}. Let $H$ be an ample divisor on $X$ such that $c_2(\sG)\cdot H^{n-2}\neq 0$. By \cite[Theorem 5.2]{loray_pereira_touzetv4}, there exists a regular foliation $\sE$ on $X$ such that $T_X\cong \sG\oplus \sE$. 
The proposition then follows from \cite[Theorem 1.6]{bobo}. 
\hfill $\Box$
\end{proof}

We are now in position to prove our main result. Note that Theorem \ref{thmintro:cd2_psef} is an immediate consequence of Proposition \ref{prop:conj_holds_rank_3} and of Theorem \ref{thm:main3} below.

\begin{thm}\label{thm:main3}
Let $X$ be a complex projective manifold, and let
$\sG \subset T_X$ be a regular foliation of rank $r$ with $K_\sG\equiv 0$.
Suppose that $K_X$ is pseudo-effective. Suppose furthermore that Conjecture \ref{conj:pereira_touzet} holds for foliations of rank at most $r$.
Then there exist complex projective manifolds $Y$ and $F$, a finite \'etale cover $f \colon Y \times F \to X$, and a regular foliation $\sH$ on $Y$ with $c_1(\sH)\equiv 0$ and $c_2(\sH)\equiv 0$
such that $f^{-1}\sG$ is the pull-back of $\sH$.
\end{thm}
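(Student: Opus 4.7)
The strategy is to apply the Pereira--Touzet polystability theorem to split $\sG$ into stable summands, use the hypothesised Conjecture~\ref{conj:pereira_touzet} to promote each summand with nonzero second Chern class to an algebraically integrable foliation, then feed each such summand into Proposition~\ref{prop:alg_int_zer_can_class} and iterate.

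Since $K_X$ is pseudo-effective and $c_1(\sG)\equiv 0$, the Pereira--Touzet polystability theorem implies that $\sG$ is polystable with respect to any polarization. Using \cite[Theorem 5.2]{loray_pereira_touzetv4} we may first pass to a finite \'etale cover making $K_\sG\sim 0$, and then, by another finite \'etale cover replacing each holonomy group by its neutral component, we may assume
$$
\sG\;\cong\;\bigoplus_{i\in I}\sG_i,
$$
where each $\sG_i$ is a regular subfoliation with $c_1(\sG_i)\equiv 0$, $K_{\sG_i}\sim 0$, $\textup{rank}\,\sG_i\le r$, and is strongly stable with respect to any polarization by Lemma~\ref{lemma:holonomy_group_versus_strong_stability}. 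Partition $I=I_0\sqcup I_1$ by whether $c_2(\sG_i)\equiv 0$ or not: for every $i\in I_1$, Conjecture~\ref{conj:pereira_touzet} (which holds in rank at most $r$ by assumption) forces $\sG_i$ to be algebraically integrable.

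Fix $i_1\in I_1$. Proposition~\ref{prop:alg_int_zer_can_class}(2) furnishes, after a further finite \'etale cover, an isomorphism $X\cong Y_1\times F_1$ under which $\sG_{i_1}$ identifies with $T_{X/Y_1}$. On such a product, any stable saturated subsheaf of $T_X$ with vanishing first Chern class is pulled back either from $Y_1$ or from $F_1$: its intersection with $T_{X/Y_1}$ is a saturated subsheaf of the stable summand, hence either zero or equal to the summand; in the former case a second application of stability to the image in $p_2^*T_{F_1}$ forces the subsheaf to lie in $p_1^*T_{Y_1}$. Each remaining $\sG_j$ with $j\neq i_1$ is therefore either horizontal or vertical, so the decomposition restricts to decompositions on $Y_1$ and on $F_1$ satisfying the hypotheses of the theorem. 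Iterating this construction over $I_1$ and finally killing the residual Galois actions on the product (as in the last paragraph of the proof of Proposition~\ref{prop:alg_int_zer_can_class}), we obtain a finite \'etale cover
$$
f\colon Y\times F\lra X
$$
under which $\bigoplus_{i\in I_1}\sG_i$ becomes $T_{Y\times F/Y}$ and $\bigoplus_{i\in I_0}\sG_i$ descends to a regular foliation $\sH$ on $Y$. The Whitney formula together with the vanishing of $c_1(\sG_i)$ for every $i$ and of $c_2(\sG_i)$ for $i\in I_0$ yields $c_1(\sH)\equiv 0$ and $c_2(\sH)\equiv 0$.

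The hard part will be the horizontal/vertical dichotomy for stable summands on a product, used in the inductive step: one must verify that the polystable decomposition of $\sG$ is respected by the product structure (not merely by the foliation structure), which requires combining the uniqueness of the polystable decomposition with the observation that the two factors of $T_{Y_1\times F_1}$ are themselves polystable summands. The rest is careful bookkeeping of the tower of \'etale covers so that everything ultimately lives on a single connected product $Y\times F$.
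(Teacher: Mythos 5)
Your overall strategy --- polystability, \'etale covers to reach (strong) stability, Conjecture \ref{conj:pereira_touzet} to obtain algebraic integrability when $c_2\not\equiv 0$, then Proposition \ref{prop:alg_int_zer_can_class} --- is the same as the paper's. The difference is in the order of reductions: the paper observes at the outset that it suffices to prove the statement for each stable summand $\sG_i$ separately, then (after further covers ensuring that stability persists under all \'etale pullbacks and that the holonomy group is connected) applies Proposition \ref{prop:alg_int_zer_can_class} exactly once to a single strongly stable foliation; the case $c_2\equiv 0$ is immediate with $F$ a point. You instead carry the whole direct sum $\bigoplus_{i}\sG_i$ through the argument and iterate over the summands with $c_2\not\equiv 0$, which forces you to prove that the polystable decomposition is compatible with each product structure $Y_1\times F_1$ produced along the way. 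That compatibility is exactly the step you flag as ``the hard part'', and it is where your argument breaks down.

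Concretely, two assertions in your dichotomy are false as stated. First, ``a saturated subsheaf of a stable sheaf is either zero or equal to it'' is not what stability gives: a stable sheaf of slope $0$ has many proper saturated subsheaves (all of negative slope); to conclude $\sG_j\cap T_{X/Y_1}\in\{0,\,T_{X/Y_1}\}$ you must first show this intersection has slope $\ge 0$, which requires an argument (for instance that it is the kernel of a map to a semistable sheaf of slope $\le 0$). Second, and more seriously, $\sG_j\cap T_{X/Y_1}=0$ only says that $\sG_j$ injects into $p_1^*T_{Y_1}$ under the quotient map; it does not place $\sG_j$ inside $p_1^*T_{Y_1}$ as a subsheaf of $T_X$. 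Graph-type summands show the danger: for an elliptic curve $E$, the diagonal line subbundle of $T_{E\times E}\cong \sO_{E\times E}^{\oplus 2}$ is stable with $c_1=c_2=0$, meets both factors trivially, and is contained in neither. Excluding this requires knowing that $p_2^*T_{F_1}$ is stable for the product polarization (strong stability again), plus a separate argument when some $\sG_j$ happens to be isomorphic to $\sG_{i_1}$. Even then, $\sG_j\subset p_1^*T_{Y_1}$ is weaker than $\sG_j$ being pulled back from $Y_1$, i.e.\ constant along $F_1$, which is what you actually need to define $\sH$ on $Y$. None of this is supplied in your sketch. The paper's reduction to a single stable summand avoids having to prove this dichotomy inside the proof (the recombination over $i\in I$ is dispatched in one line there); if you wish to keep your iterative architecture, the horizontal/vertical splitting must be established as a lemma in its own right.
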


\begin{proof}
Recall from \cite[Lemma 2.1]{pereira_touzet}, that $\sG$ is polystable with respect to any polarization.
Thus, there is a decomposition $\sG\cong\oplus_{i\in I}\sG_i$ of $\sG$ into involutive sub-vector bundles of $T_X$ such that $\sG_i$ is stable with respect to any polarization. Notice that $c_1(\sG_i)\equiv 0$ for all indices $i\in I$. 
Notice also that it suffices to prove the theorem for the foliations $\sG_i$.
Therefore, we may assume without loss of generality that $\sG$ is 
stable with respect to any polarization.

Suppose that there exists a finite \'etale cover $g\colon Y \to X$ such that $g^*\sG$ is not stable.
Applying \cite[Lemma 2.1]{pereira_touzet}, we see that 
there exist non-zero vector bundles $(\sG_j)_{j \in J}$, stable with respect to any polarization with 
$c_1(\sG_j)\equiv 0$ such that 
$f^{*}\sG\cong\oplus_{j \in J}\sG_j$. 
As before, it suffices to prove the theorem for $\sG_j$ for all indices $j\in J$.
Therefore, by replacing $X$ with a further finite \'etale cover, if necessary, we may assume that 
for any finite \'etale cover $g\colon Y \to X$, $g^*\sG$ is stable with respect to any polarization.

Let $H$ be an ample divisor on $X$. By \cite[Lemma 40]{balaji_kollar} (see also \cite[Lemma 6.20]{bobo}), 
there exists a finite \'etale cover $f\colon \wt X \to X$ such that $\Hol_{\wt x}(f^{*}\sG)$ is connected, where 
$\wt x$ is a point on $\wt X$.
Applying Lemma \ref{lemma:holonomy_group_versus_strong_stability}, we see that $f^*\sG$ is strongly stable with respect to $f^*H$. If $c_2(f^*\sG)\equiv 0$, then the statement is obvious. If 
$c_2(f^*\sG)\not\equiv 0$, the theorem then follows from Proposition \ref{prop:alg_int_zer_can_class}.
\hfill $\Box$
\end{proof}

\section{Poisson manifolds -- Generalized Bondal conjecture}

In this section, we address the generalized Bondal conjecture. We first recall the basic facts concerning Poisson manifolds.

\begin{say}[Poisson structures] \rm
A (holomorphic) \textit{Poisson structure} on a complex manifold $X$ is a bivector field 
$\tau \in H^0(X,\wedge^2T_X)$, such that the bracket $\{f,g\}:=\langle \tau ,df\wedge dg\rangle$ defines a Lie algebra structure on $\sO_X$. A Poisson structure defines a skew-symmetric  map $\tau ^\sharp:\Omega ^1_X\rightarrow T_X$; the \textit{rank} of $\tau $ at a point $x\in X$ is the rank of $\tau ^\sharp(x)$. It is  even because $\tau ^\sharp$ is skew-symmetric. The data of a Poisson structure of rank $\dim X$ is equivalent to that of a (holomorphic) symplectic structure. In general, we have a partition
$$X=\bigsqcup_{s\textup{ even}}X_s \quad\textup{where}\quad X_s :=\{x\in X\ |\ \textup{rk }\tau^\sharp (x)=s\}.$$
Let $r$ be the generic rank of $\tau$. 
The \textit{generic corank} of $\tau$ is defined as $q:=\dim X -r$.
\end{say}

The following conjecture is due to Beauville (\cite[Conjecture 5]{beauville_problem_list}).

\begin{conj}
Let $(X,\tau)$ be a projective Poisson manifold, and
let $0 \le s <r$ be an even integer.
If $X_{\leq s}:=\bigsqcup_{k\leq s}X_k$
is non-empty, it  contains a component of dimension $>s$. 
\end{conj}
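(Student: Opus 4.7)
The plan is to associate to the Poisson structure $\tau$ its \emph{symplectic foliation} $\sG\subset T_{X}$, namely the saturation in $T_{X}$ of the image of $\tau^{\sharp}\colon\Omega^{1}_{X}\to T_{X}$. The Jacobi identity for the Poisson bracket makes $\sG$ involutive of rank $r$, and its singular locus satisfies $Z(\sG)\subseteq D:=X\setminus X_{r}$. The Pfaffian $\tau^{r/2}\in H^{0}(X,\wedge^{r}T_{X})$ takes its values in the sub-line-bundle $\det\sG|_{X_{r}}\subseteq\wedge^{r}T_{X}|_{X_{r}}$ and is nowhere vanishing there; under the contradiction hypothesis below, a Hartogs-type argument (comparing $\tau^{r/2}$ with its reciprocal section of $(\det\sG)^{-1}$ on $X_{r}$) upgrades this to a nowhere-vanishing global section of $\det\sG$, yielding $\det\sG\cong\sO_{X}$ and $K_{\sG}=0$.

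Set $q:=\dim X-r$. The case $q=0$ is immediate: $D$ is the zero scheme of the Pfaffian, which is then a section of $\sO_{X}(-K_{X})$, so if non-empty it is a divisor of dimension $n-1>r-2$. Assume henceforth $q\in\{1,2\}$ and argue by contradiction: suppose $D$ is non-empty but every irreducible component has dimension at most $r-2$, equivalently $\codim D\ge q+2$. The plan is then to deduce $D=\emptyset$ in two steps. First, show that $\sG$ is regular, i.e.\ $Z(\sG)=\emptyset$. Second, observe that once $\sG$ is a sub-bundle with $\det\sG\cong\sO_{X}$, the Pfaffian lies in $H^{0}(X,\det\sG)\cong\mathbb{C}$ and must be a non-zero constant (its Pfaffian vanishes on $D$ but is non-zero on $X_{r}\neq\emptyset$); in particular it is nowhere vanishing, forcing $D=\emptyset$, a contradiction.

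When $K_{X}$ is not nef, the required regularity of $\sG$ follows at once from the sharp results of Section~5 applied to $\sG$: Proposition~\ref{prop:singular_set_cd1} for $q=1$, and Theorem~\ref{thm:main2} for $q=2$; both give $Z(\sG)=\emptyset$ under our codimension hypothesis on $Z(\sG)\subseteq D$.

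The principal technical obstacle is the remaining case in which $K_{X}$ is nef, since the Mori-theoretic machinery of Sections~4--5 is no longer available. Here Lemma~\ref{lemma:vanishing_char_classes} still yields $c_{1}(T_{X}/\sG)^{q+1}=(-K_{X})^{q+1}\equiv 0$, so $\nu(K_{X})\le q\le 2$. The plan is to combine this with the non-zero element of $H^{0}(X,\Omega^{q}_{X}\otimes\sO_{X}(-K_{X}))$ supplied by the Pfaffian via the canonical isomorphism $\wedge^{r}T_{X}\cong\Omega^{q}_{X}\otimes\sO_{X}(-K_{X})$, and then to invoke the Beauville-Bogomolov decomposition of an \'etale cover in the subcase $\nu(K_{X})=0$, or an Iitaka-type fibration when $\nu(K_{X})\in\{1,2\}$, in order to reduce to product situations where a non-trivial Poisson bivector can be shown by hand to have no degeneracy of codimension $\ge q+2$.
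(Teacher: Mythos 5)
There are two genuine gaps here. First, the statement you are asked to prove is the full generalized Bondal conjecture, for an arbitrary even integer $s<r$ and an arbitrary projective Poisson manifold; your argument silently specializes to $s=r-2$ (the locus $D=X\setminus X_r$) and to generic corank $q=\dim X-r\le 2$. This is exactly the special case the paper actually establishes (Theorem \ref{thmintro:generalized_bondal_conjecture}); the general statement is left as a conjecture there, and nothing in your proposal touches the deeper strata $X_{\le s}$ for $s<r-2$, which are not controlled by the singular locus of the symplectic foliation. Within that special case your skeleton does match the paper's: associate to $\tau$ the foliation $\sG$ with $Z(\sG)\subset D$ and $\det\sG\cong\sO_X$ when $\codim D\ge 2$, observe that regularity of $\sG$ forces $X=X_r$ (your Pfaffian argument is an equivalent reformulation of Lemma \ref{lemma:singular_set_associated_foliation}), and, when $K_X$ is not nef, conclude $Z(\sG)=\emptyset$ from Proposition \ref{prop:singular_set_cd1} and Theorem \ref{thm:main2}.

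The second and decisive gap is the case $K_X$ nef, which you correctly identify as the crux but do not prove. What you offer is a plan, not an argument: nefness of $K_X$ with $\nu(K_X)\in\{1,2\}$ does not produce an ``Iitaka-type fibration'' (that is the abundance problem), and the promised reduction ``to product situations where a non-trivial Poisson bivector can be shown by hand to have no degeneracy of codimension $\ge q+2$'' is never carried out, nor is it clear how it would be even for $K_X\equiv 0$. The paper disposes of this case by a single external input, \cite[Theorem 5.4]{loray_pereira_touzetv4}: for a foliation with numerically trivial canonical class whose singular locus has small enough codimension but is non-empty, $K_X$ fails to be pseudo-effective. Hence the nef case simply cannot occur once $Z(\sG)\neq\emptyset$, and this is precisely the ingredient your proposal is missing. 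Without it (or a genuine substitute), the argument does not close even in the special case you treat.
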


The generalized Bondal conjecture implies that if the degeneracy locus $X \setminus X_r$ of $\tau$ is non-empty, then it contains a component of dimension $>r-2$. This is known to be true if $c_1(X)^{q+1}\neq 0$ in $H^{q+1}(X,\Omega^{q+1}_X)$ (see \cite[Corollary 9.2]{polishchuk}, and also \cite[Proposition 4]{beauville_problem_list}).

\begin{say}[Foliation associated to a Poisson structure]\label{foliation_symplectic} \rm
Let $(X,\tau)$ be a Poisson manifold, and let $r$ be the generic rank of $\tau$. 
The distribution on $X_r$ given by Hamiltonian vector fields
$\tau^\sharp(df)$ is involutive. We denote by $\sG \subset T_X$ the corresponding (possibly singular) foliation on $X$. 
The restriction of $\tau^\sharp$ to $X_r$ induces a non-degenerate
skew-symmetric map $(\sG_{|X_r})^* \to \Omega^1_{X_r}\to \sG_{|X_r}$. 
This implies that
$\det(\sG_{|X_r})\cong \sO_{X_r}$. Set $\sN:=T_X/\sG$. 
Notice that $Z(\sG) \subset X\setminus X_r$. Thus,
if the degeneracy locus $X \setminus X_r$ of $\tau$ has codimension
at least $2$ in $X$, then we have
$$\det(\sG)\cong \sO_X\quad\textup{and}\quad\det(\sN)\cong \sO_X(-K_X).$$
\end{say}

The proof of Theorem \ref{thmintro:generalized_bondal_conjecture} makes use of the following observation.

\begin{lemma}\label{lemma:singular_set_associated_foliation}
Let $(X,\tau)$ be a Poisson manifold, and let $\sG$ be the corresponding foliation.
Suppose that the degeneracy locus $X \setminus X_r$ of $\tau$ has codimension
at least $2$ in $X$. If $Z(\sG)$ is empty, then $X = X_r$.
\end{lemma}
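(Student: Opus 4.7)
The plan is to use the pfaffian of $\tau$, viewed as a section of $\wedge^2 \sG$ rather than $\wedge^2 T_X$, as a holomorphic function on $X$ whose zero set is exactly $X \setminus X_r$. First, I would record the consequences of $Z(\sG) = \emptyset$ coming from \ref{foliation_symplectic}: the sheaf $\sG$ is a subbundle of $T_X$ of even rank $r$, the quotient $\sN = T_X/\sG$ is locally free, and there is a trivialization $\det \sG \cong \sO_X$. In particular $\wedge^2 \sG$ sits as a subbundle of $\wedge^2 T_X$.

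The next step is to show that the bivector $\tau \in H^0(X, \wedge^2 T_X)$ actually lies in $H^0(X, \wedge^2 \sG)$. On the dense open subset $X_r$, the image of $\tau^\sharp : \Omega^1_X \to T_X$ equals $\sG_{|X_r}$ by definition of $\sG$, so the composition with the projection $T_X \twoheadrightarrow \sN$ vanishes on $X_r$. Since $\sN$ is locally free, this composition vanishes on all of $X$, and hence $\tau^\sharp$ factors through $\sG$, proving $\tau \in H^0(X, \wedge^2 \sG)$.

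I would then consider the top exterior power $\tau^{r/2} \in H^0(X, \det \sG) = H^0(X, \sO_X)$, which is a holomorphic function $f$ on $X$. The key linear algebra fact is that for $x \in X$ the rank of the Poisson tensor $\tau$ at $x$ is the rank of the induced map $\sG_x^* \to \sG_x$ (the contribution of $\sN^*_x \subset \Omega^1_{X,x}$ to $\tau^\sharp(x)$ is zero by skew-symmetry), and this rank equals $r$ if and only if $\tau(x)^{r/2} \neq 0$ in $\det \sG_x$. Consequently, $f$ vanishes precisely on $X \setminus X_r$.

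Finally, I would argue by contradiction: if $X \setminus X_r$ is non-empty, then $f$ is a holomorphic function on $X$ that is not identically zero (it is nonzero on the dense set $X_r$) but whose zero set contains $X \setminus X_r$. The zero set of a non-identically-zero holomorphic function has pure codimension one, which contradicts the hypothesis that $X \setminus X_r$ has codimension at least two. Hence $X = X_r$. The only subtlety I expect is the step of promoting $\tau$ from a section of $\wedge^2 T_X$ to one of $\wedge^2 \sG$, but the subbundle property of $\sG$ (guaranteed by $Z(\sG) = \emptyset$) makes this routine.
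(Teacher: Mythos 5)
Your proof is correct and takes essentially the same route as the paper's. The paper detects the degeneracy locus via the determinant of the induced skew-symmetric map $\sG^* \to \sG$, a section of $\det(\sG)^{\otimes 2}\cong \sO_X$, whereas you use the Pfaffian $\tau^{r/2}\in H^0(X,\det\sG)\cong H^0(X,\sO_X)$; in either case the hypothesis that $X\setminus X_r$ has codimension at least two forces this holomorphic function to be nowhere vanishing, so $X=X_r$.
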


\begin{proof}
Suppose that $Z(\sG)=\emptyset$. Then $\sG$ is a vector bundle with $\det(\sG)\cong \sO_X$. It follows that the 
composed morphism $\sG^* \to \Omega_X^1 \to \sG$ must be an isomorphism. The lemma easily follows.
\hfill $\Box$
\end{proof}

In view of the generalized Bondal conjecture, one may ask the following.

\begin{conj}\label{conjecture:singular_set}
Let $X$ be a complex projective manifold, and let
$\sG \subset T_X$ be foliation of codimension $q$ on $X$ with numerically trivial canonical class.
If the singular locus $Z(\sG)$ of $\sG$
is non-empty, it  contains a component of codimension $ \le q+1$. 
\end{conj}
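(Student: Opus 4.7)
The plan is to argue by contradiction: assume $\codim_X Z(\sG) \ge q + 2$ and derive $Z(\sG) = \emptyset$. Under this assumption, the short exact sequence $0 \to \sG \to T_X \to \sN \to 0$ together with $K_\sG \equiv 0$ gives $c_1(\sN) \equiv -K_X$, and Lemma \ref{lemma:vanishing_char_classes} yields $K_X^{q+1} \equiv (-1)^{q+1} c_1(\sN)^{q+1} \equiv 0$. In particular $K_X$ is not big, which is the starting numerical input.

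First, treat the case where $K_X$ is not pseudo-effective. Let $\phi \colon X \to Y$ be the contraction of a $K_X$-negative extremal ray. By Lemma \ref{lemma:dimension_fiber_contraction}, every fiber has dimension at most $q$, with strict inequality for non-equidimensional $\phi$. Following the pattern of the proof of Theorem \ref{thm:main2}, distinguish three subcases: (a) $\phi$ equidimensional with $\dim Y = r$; (b) $\phi$ equidimensional with $\dim Y > r$; (c) $\phi$ birational. In (a), after verifying projectability, Proposition \ref{prop:foliation_equidim_fibration} forces $Z(\sG) = \emptyset$, contradicting our assumption. In (b) and (c), project $\sG$ to a foliation $\sG_Y$ on $Y$ as in \S\ref{say:morphisms_and_foliations}, verify that $K_{\sG_Y} \equiv 0$ and that $\codim_Y Z(\sG_Y) \ge q + 2$ (mimicking Propositions \ref{prop:foliation_blow_up} and \ref{prop:foliation_conic_bundle}), and induct on $\dim X$ and the Picard number.

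Next, suppose $K_X$ is pseudo-effective. Restricted to the open set $U := X \setminus Z(\sG)$, the foliation $\sG$ is regular, and the codimension bound $\codim_X Z(\sG) \ge q + 2 \ge 3$ lets one transfer cohomological data between $X$ and $U$ via Lemma \ref{lemma:vanishing_coh_support}. Adapt the proof of Theorem \ref{thm:main3}: decompose $\sG$ into polystable summands $\sG_i$ (using a reflexive extension of \cite[Lemma 2.1]{pereira_touzet}) and, for each factor, either use the trivialization from \cite[Theorem 5.2]{loray_pereira_touzetv4} when $c_2(\sG_i) \equiv 0$, or invoke the algebraic-integrability criterion \cite[Theorem 1.6]{bobo} together with Proposition \ref{prop:alg_int_zer_can_class} when $c_2(\sG_i) \not\equiv 0$. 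In either subcase one obtains, up to finite \'etale cover, a product decomposition in which $\sG$ extends to a regular foliation, forcing $Z(\sG) = \emptyset$.

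The main obstacle is the projectability issue in the MMP step. Corollary \ref{cor:projectable_criterion} is limited to contractions with fibers of dimension at most $2$, so for $q \ge 3$ one cannot, in general, guarantee that $\sG$ projects under $\phi$; closing this gap essentially amounts to an affirmative answer to Question \ref{question:foliation_rational_curve}, which is open. A secondary difficulty is transferring the Pereira--Touzet machinery from the regular to the saturated setting: while the codimension bound makes cohomology on $X$ and $X \setminus Z(\sG)$ agree in the relevant range, the results involving holonomy, flat connections, and algebraicity of leaves are stated for regular foliations, and their extension to saturated ones with singularities of codimension $\ge q + 2$ requires care. Consequently the method yields the conjecture unconditionally for $q \le 2$, recovering the paper's results, but general $q$ hinges on these two open inputs.
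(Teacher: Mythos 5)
The statement you were asked to prove is Conjecture \ref{conjecture:singular_set}, which the paper explicitly leaves open: it is established there only for $q\le 2$ (Theorem \ref{thm:singular_set}, via Proposition \ref{prop:singular_set_cd1} and Theorem \ref{thm:main2}), so there is no full proof to compare against. Your proposal, to its credit, lands in the same place: you recover the cases $q\le 2$ and you correctly identify Question \ref{question:foliation_rational_curve} --- equivalently the fiber-dimension restriction in Corollary \ref{cor:projectable_criterion} --- as the precise obstruction to pushing the Mori-contraction argument to higher codimension. On the non-nef branch your sketch is essentially the paper's: contract a $K_X$-negative extremal ray, bound the fibers by Lemma \ref{lemma:dimension_fiber_contraction}, project the foliation, and either conclude with Proposition \ref{prop:foliation_equidim_fibration} or induct on the Picard number as in the proof of Theorem \ref{thm:main2}.

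Where you diverge, and where the proposal is weakest, is the pseudo-effective branch. The paper needs none of the Section 6 machinery there: by \cite[Theorem 5.4]{loray_pereira_touzetv4} (see the remark following Conjecture \ref{conjecture:singular_set}), a foliation with numerically trivial canonical class on a projective manifold with pseudo-effective canonical divisor is automatically regular, so $Z(\sG)\neq\emptyset$ already forces $K_X$ not pseudo-effective and that branch is vacuous. Your plan to adapt Theorem \ref{thm:main3} and Proposition \ref{prop:alg_int_zer_can_class} to the singular setting is therefore unnecessary, and as written it is also not justified: those results (holonomy of stable sheaves, flat connections, Reeb stability) are proved for regular foliations, and Lemma \ref{lemma:vanishing_coh_support} only transfers cohomology of vector bundles across $Z(\sG)$, not the geometric structure of leaves near the singular set. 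In summary: this is not a proof of the conjecture --- none exists in the paper --- but a faithful reconstruction of its partial result, provided you replace your pseudo-effective analysis by the single citation above.
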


The statement is obviously true in the case $q=0$.
It is also known to be true if $q=1$ by \cite[Corollary 4.7]{loray_pereira_touzetv3} (see also
Proposition \ref{prop:singular_set_cd1}).

\begin{rem} \rm
In the setup of Conjecture \ref{conjecture:singular_set}, if $Z(\sG)$ is non-empty, then 
$K_X$ is not pseudo-effective by \cite[Theorem 5.4]{loray_pereira_touzetv4}.
\end{rem}

\begin{thm}\label{thm:singular_set}
Let $X$ be a complex projective manifold, and let
$\sG \subset T_X$ be foliation of codimension $ q\le 2$ on $X$ with numerically trivial canonical class.
If the singular locus $Z(\sG)$ of $\sG$
is non-empty, it  contains a component of codimension $\le q+1$. 
\end{thm}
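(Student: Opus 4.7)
The plan is to argue by contradiction, reducing to the already-established structure results. Suppose that $Z(\sG)$ is non-empty but that every irreducible component of $Z(\sG)$ has codimension $\ge q+2$ in $X$, so that $\dim Z(\sG) \le \dim X - q - 2$. The goal is to derive a contradiction with the conclusion $Z(\sG)=\emptyset$ of the main structural statements.

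The first step is to produce a negativity assumption on $K_X$. The Remark preceding the theorem, which is an application of \cite[Theorem 5.4]{loray_pereira_touzetv4}, tells us that if $Z(\sG)\neq\emptyset$ then $K_X$ is not pseudo-effective. In particular, $K_X$ is not nef, so one of the earlier main theorems applies.

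Now I split on the value of $q$. If $q=0$ there is nothing to prove, since a codimension $0$ foliation is just $T_X$ and has empty singular locus. If $q=1$, then the hypothesis $\dim Z(\sG)\le \dim X-3$ is exactly what is needed to apply Proposition \ref{prop:singular_set_cd1}, whose conclusion is $Z(\sG)=\emptyset$, contradicting our assumption. If $q=2$, then the hypothesis $\dim Z(\sG)\le \dim X-4$ is the precise hypothesis of Theorem \ref{thm:main2}, whose conclusion again begins with $Z(\sG)=\emptyset$, providing the required contradiction.

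Since the heavy lifting has already been carried out in Proposition \ref{prop:singular_set_cd1} and Theorem \ref{thm:main2}, there is no serious obstacle; the only subtlety is simply to observe that the codimension bound we assume for contradiction is exactly the codimension bound under which those earlier results force $Z(\sG)$ to be empty, and that non-emptiness of $Z(\sG)$ automatically provides the non-nefness of $K_X$ needed to invoke them. Hence the theorem follows.
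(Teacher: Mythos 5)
Your proposal is correct and follows essentially the same route as the paper: non-emptiness of $Z(\sG)$ together with \cite[Theorem 5.4]{loray_pereira_touzetv4} forces $K_X$ to be non-nef, and then the assumed codimension bound $\ge q+2$ is exactly the hypothesis under which Proposition \ref{prop:singular_set_cd1} (for $q=1$) and Theorem \ref{thm:main2} (for $q=2$) conclude $Z(\sG)=\emptyset$, giving the contradiction. The paper's proof is just a terser phrasing of the same case analysis.
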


\begin{proof}
Suppose that $Z(\sG)$ is non-empty. Applying \cite[Theorem 5.4]{loray_pereira_touzetv4}, we see that 
$K_X$ is not nef. The theorem then follows from Proposition \ref{prop:singular_set_cd1} and Theorem \ref{thm:main2}.
\hfill $\Box$
\end{proof}

\begin{proof}[Proof of Theorem \ref{thmintro:generalized_bondal_conjecture}]
We maintain notation and assumptions of Theorem \ref{thmintro:generalized_bondal_conjecture}.
The statement is easy if $r = \dim X$. Suppose now that $r \in \{\dim X-2,\dim X-1\}$, and let $\sG$ be
the natural foliation associated to $\tau$. 
Suppose moreover that $X\setminus X_r$ is non-empty and has codimension at least $2$ in $X$.
From Lemma \ref{lemma:singular_set_associated_foliation}, we conclude that $Z(\sG)$ is non-empty as well.
The statement then follows from Theorem \ref{thm:singular_set} above applied to $\sG$ using the fact that $Z(\sG) \subset X\setminus X_r$.
\hfill $\Box$
\end{proof}

\providecommand{\bysame}{\leavevmode\hbox to3em{\hrulefill}\thinspace}
%
%

\bibliographystyle{amsalpha}
\bibliographymark{References}

\end{document}